\long\def\unmarkedfootnote#1{{\long\def\@makefntext##1{##1}\footnotetext{#1}}}
\newtheorem{theorem}{Theorem}[section]
\newtheorem{lemma}[theorem]{Lemma}
\newtheorem{corollary}[theorem]{Corollary}
\newtheorem{proposition}[theorem]{Proposition}
\newtheorem{remark}[theorem]{Remark}
\numberwithin{equation}{section}
\newcommand{\meantmp}[2]{#1\langle{#2}#1\rangle}
\newcommand{\mean}[1]{\meantmp{}{#1}}
\newcommand{\setC}{{\mathbb{C}}}
\newcommand{\setN}{{\mathbb{N}}}
\newcommand{\setR}{{\mathbb{R}}}
\newcommand{\Rn}{{\mathbb{R}^n}}
\newcommand{\RN}{{\mathbb{R}^N}}
\newcommand{\RNn}{\mathbb{R}^{N\times n}}
\newcommand{\diam}{\text{\rm diam}}
\providecommand{\normtmp}[2]{{#1\lVert{#2}#1\rVert}}
\newcommand{\norm}[1]{\normtmp{}{#1}}
  \providecommand{\bigset}[1]{\settmp{\big}{#1}}
\newcommand{\dx}{\ensuremath{\,{\rm d} x}}
\newcommand{\dy}{\ensuremath{\,{\rm d} y}}
\newcommand{\dz}{\ensuremath{\,{\rm d} z}}
\newcommand{\bfw}{\mathbf{w}}
\newcommand{\bfu}{\mathbf{u}}
\newcommand{\bu}{\mathbf{u}}
\newcommand{\bfp}{\mathbf{p}}
\newcommand{\bfc}{\mathbf{c}}
\newcommand{\bfv}{\mathbf{v}}
\newcommand{\bfg}{\mathbf{g}}
\newcommand{\bff}{\mathbf{f}}
\newcommand{\bfF}{\mathbf{F}}
\newcommand{\bF}{\mathbf{F}}
\newcommand{\bfC}{\mathbf{C}}
\newcommand{\bfP}{\mathbf{P}}
\newcommand{\bfB}{\mathbf{B}}
\newcommand{\bfG}{\mathbf{G}}
\newcommand{\bfQ}{\mathbf{Q}}
\newcommand{\bfR}{\mathbf{R}}
\newcommand{\bfA}{\mathbf{A}}
\newcommand{\bfI}{\mathbf{I}}
\newcommand{\bfJ}{\mathbf{J}}
\newcommand{\bfT}{\mathbf{T}}
\newcommand{\bfphi}{\boldsymbol{\varphi}}
\newcommand{\bfvarphi}{\boldsymbol{\varphi}}
\newcommand{\bfpsi}{\boldsymbol{\psi}}
  \providecommand{\bfxi}{{\boldsymbol{\xi}}}
  \providecommand{\Xint}[1]{\mathchoice
    {\XXint\displaystyle\textstyle{#1}}%
    {\XXint\textstyle\scriptstyle{#1}}%
    {\XXint\scriptstyle\scriptscriptstyle{#1}}%
    {\XXint\scriptscriptstyle\scriptscriptstyle{#1}}%
    \!\int}
  \providecommand{\XXint}[3]{{\setbox0=\hbox{$#1{#2#3}{\int}$}
      \vcenter{\hbox{$#2#3$}}\kern-.5\wd0}}
  \providecommand{\dashint}{\mathop{\Xint-}}
  \providecommand{\abstmp}[2]{{#1\lvert{#2}#1\rvert}}
  \providecommand{\abs}[1]{\abstmp{}{#1}}
    \providecommand{\bigabs}[1]{\abstmp{\big}{#1}}
  \providecommand{\Bigabs}[1]{\abstmp{\Big}{#1}}
    \providecommand{\settmp}[2]{{#1\{{#2}#1\}}}
    \providecommand{\set}[1]{\settmp{}{#1}}
\newcommand{\pbar}{{p}}
\newcommand{\qbar}{{p}}
\DeclareMathOperator{\setVMO}{VMO}
\DeclareMathOperator{\setBMO}{BMO}
\DeclareMathOperator{\trace}{tr}
  \DeclareMathOperator{\divergence}{div}
\newcommand{\omegao}{\overline{\omega}}
\newcommand{\seb}[1]{{\textcolor[rgb]{1.00,0.30,0.00}{ #1}}}
\newcommand{\Lcal}{{\mathcal{L}}}
\newcommand{\db}[1]{\textcolor{black}{  #1}}
\begin{document}

\title[Global Schauder estimates for the $p$-Laplace system ]{Global Schauder estimates for the $\bfp$-Laplace system 
}

\author{D.~Breit, A.~Cianchi, L.~Diening and S.~Schwarzacher}

\address{Dominic Breit,
School of Mathematical \& Computer Science, Heriot-Watt University,
Riccarton Edinburgh EH14 4AS UK} \email{d.breit@hw.ac.uk}
\address{Andrea Cianchi, Dipartimento di Matematica e Informatica \lq\lq U.Dini", Universit\`{a} di Firenze, Viale Morgagni 67/A, 50134, Firenze, Italy}
\email{andrea.cianchi@unifi.it}
\address{Lars Diening, 
Universit\"at Bielefeld
Fakult\"at f\"ur Mathematik
Postfach 10 01 31
D--33501, Bielefeld, Germany
} \email{lars.diening@uni-bielefeld.de}
\address{Sebastian Schwarzacher, Department of mathematical analysis, Faculty of Mathematics and Physics,  Charles University, Prague,
Sokolovsk\'{a} 83, 186 75, Prague, Czech Republic}
\email{schwarz@karlin.mff.cuni.cz}

%
%
%
%

\begin{abstract}
An optimal  first-order global regularity theory, in spaces of functions defined in terms of oscillations, is established for solutions to Dirichlet problems for the $p$-Laplace equation and system, with right-hand side in divergence form. The exact mutual dependence among the regularity of the solution, of the datum on the right-hand side, and of the boundary of the domain in these spaces is exhibited. A comprehensive formulation of our results is given in terms of Campanato seminorms. New regularity results in customary function spaces, such as H\"older, $\setBMO$ and $\setVMO$ spaces, follow as a consequence. Importantly, the conclusions are new even in the linear case when $p=2$, and hence  the differential operator is the plain Laplacian. Yet in this classical linear setting, our contribution completes and augments the celebrated Schauder theory in H\"older spaces, and complements the Jerison-Kenig  gradient theory  in Lebesgue spaces with a parallel in the oscillation spaces realm. The sharpness of our results is demonstrated by apropos examples.
%
\end{abstract}

\maketitle

\unmarkedfootnote {
\par\noindent {\it Mathematics Subject
Classifications:} 35J25, 35J60, 35B65.
\par\noindent {\it Keywords:} Quasilinear elliptic systems, global gradient regularity, $p$-Laplacian,
Dirichlet problems, Campanato spaces, H\"older spaces, $\setBMO$.
%
%
}

\section{Introduction}\label{intro}

We are concerned with  the Dirichlet problem for the
 $p$-Laplace system
\begin{align}
\label{eq:sysA}
\begin{cases}
  -\divergence(|\nabla \bfu|^{p-2} \nabla \bfu) = -\divergence
  \bfF\, & \quad \hbox{in $\Omega$}
\\ \bfu =0 & \quad  \hbox{on $\partial \Omega$.}
\end{cases}
\end{align}
Here,   the exponent $p \in (1, \infty)$, $\Omega$ is a bounded open set in $\Rn$, with $n \geq 2$,  the
function $\bfF \,:\, \Omega \to \RNn$, with $N \geq 1$, is  given,  $\bfu \,:\,
\Omega \to \RN$ is the unknown, and $\nabla \bfu : \Omega \to \RNn$ denotes its gradient.
Under the assumption that  $\bfF \in L^{p'}(\Omega)$, where  $p'=\tfrac p{p-1}$, one has that $\divergence
  \bfF$  belongs to the dual of the Sobolev space  $ W^{1,p}_0(\Omega)$.  Hence, a weak solution $\bfu \in  W^{1,p}_0(\Omega)$ to problem \eqref{eq:sysA} is well defined, and its existence and uniqueness follow via  standard variational methods.
\par The 
present paper focuses  on 
global -- namely up to the boundary -- higher regularity properties of 
 $\nabla \bfu$ inherited from those of
$\bfF$. Specifically, we offer a sharp global Schauder regularity theory for norms depending on oscillations of  $\nabla \bfu$. Campanato type norms provide a suitable framework for a unified formulation of such a theory. Membership of the gradient of the solution $\bfu$ to problem \eqref{eq:sysA} in Campanato type spaces depends  on both the regularity of  the datum $\bfF$ and that of the boundary $\partial \Omega$  in the same kind of spaces. 
Our results provide an exact description of the interplay among these three pieces of information, and show that the required balance among them is qualitatively independent of the dimensions $n$ and $N$, and of $p$ . Their optimality  is  demonstrated via a precise analysis of the behaviour of the solutions in suitable model problems.  Proofs entail the development of new decay estimates on balls near the boundary, that rely upon an unconventional flattening technique exploiting   local coordinates which depend on the radius of the balls.
%
%
%
\par Although our primary interest is in  nonlinear problems, the conclusions to be presented are new, and best possible, even in the linear case when $p=2$, namely when  the differential operator in \eqref{eq:sysA} is just the Laplacian. Interestingly, since our results also admit a local version, they provide novel optimal gradient regularity properties up to the boundary also for harmonic functions vanishing on a subset of $\partial \Omega$. This can be regarded as a counterpart, in the scale of norms depending on oscillations, of the sharp $L^q$ gradient regularity theory   for linear equations developed in \cite{JeKe},  and of the $L^\infty$ gradient bounds of \cite{Mazya1}.
%
%
\par As far as  genuinely nonlinear problems -- corresponding to $p \neq 2$ --  are concerned, 
global $\setBMO$ regularity is an  important new consequence of our general estimates, which actually cover the whole region between $\setBMO$ and H\"older spaces, including, for instance, spaces defined in terms of a general modulus of continuity.  In fact, on providing sharp quantitative information, our results  also enhance the classical global theory, where H\"older norms are employed to describe the regularity of $\bF$, $\partial \Omega$ and $\nabla \bu$.
\par
The
local  H\"older gradient regularity   of solutions to the system in
 \eqref{eq:sysA}, in the homogeneous case when $\bfF =0$ and for $p \geq 2$, goes back
 to the paper \cite{Uhl77}, after which systems involving differential operators depending
  only on the length of the gradient are usually called with Uhlenbeck structure.  The same result in the scalar case ($N=1$) had been earlier
established in \cite{Ura68} for every $p \in (1,\infty )$. Unlike the case of systems, the Uhlenbeck structure is not needed for the regularity of solutions in the scalar case, as shown in the papers \cite{DiBe, Ev, Le, To}.  The
contribution \cite{Uhl77} was  extended to the situation when
$1<p<2$ in \cite{AceF89} and in \cite{ChenDiBe}, the latter paper also including non-vanishing right-hand sides and parabolic problems.
 The
local $\setBMO$ gradient regularity for solutions to \eqref{eq:sysA}
is proved, for $p \geq 2$, in \cite{DiBMan93}. A version of
that result, which holds for every $p \in (1, \infty)$, and for
$\bfF$ in more  general Campanato type spaces, has recently been
obtained in \cite{DieKapSch11}, but still in local form.
\par The global regularity theory is not as developed as the local one. The H\"older gradient regularity for equations of $p$-Laplacian type, in domains whose boundary has also H\"older continuous first-order derivatives, can be traced back to \cite{Lie88}. The result for systems (with Uhlenbeck structure)
was achieved in 
\cite{ChenDiBe} for domains of the same kind. However, 
we stress that our result provide us with the best possible H\"older exponent for  first-order derivatives of the solution depending on the H\"older exponent of the first-order derivatives of the boundary,  whereas the conclusions of \cite{Lie88} and \cite{ChenDiBe} do not yield any explicit mutual dependence of these exponents. Moreover, 
the right-hand sides considered in
\cite{Lie88} and \cite{ChenDiBe} are not in divergence form, and hence 
 the system in \eqref{eq:sysA} cannot
be reduced to the form of those papers in general.  Right-hand sides in non-divergence form   also appear in  \cite{cm-2, cm-1}, where $L^\infty$ gradient estimates, under minimal boundary regularity assumptions, are established.   Results on global $\setBMO$ regularity seem  to be still completely missing in the existing literature. Filling a gap in this major special instance was one of the original motivations for our research. 
\par  Further contributions 
on gradient regularity up to the boundary
 for  systems and variational problems with
 Uhlenbeck structure, or perturbations of it,
 are  \cite{BMSV, Fo, FPV, Ham}. Partial boundary
regularity, i.e. regularity at the boundary outside subsets of zero
$(n-1)$-dimensional Hausdorff measure, for nonlinear elliptic
systems with general structure, is proved in  \cite{DKM, KrM2} (see also \cite{Ham2} for a special case).  Related results on regular boundary points can be found in \cite{Be, Gr}.

\section{Main results}\label{sec:main}

Our comprehensive result, stated in Theorem \ref{thm:campanato},  is formulated in terms of Campanato type
seminorms $\norm{\cdot }_{\mathcal L ^{\omega (\cdot)} (\Omega )}$, associated
with  parameter-functions  $\omega : [0, \infty) \to
[0, \infty)$ which will be assumed to be   continuous and  non-decreasing in what follows. 
These seminorms are defined as
\begin{align}\label{campnorm}
  \norm{\bff }_{\mathcal L^{\omega (\cdot)}(\Omega)}=
\sup_{
\begin{tiny}
 \begin{array}{c}{
    x \in \Omega} \\
r>0
 \end{array}
  \end{tiny}
}
  \dashint_{\Omega \cap B_r(x)}\abs{\bff-\mean{\bff}_{\Omega\cap B_r(x)}}\dy,
\end{align}
for a real, vector or matrix-valued integrable function  $\bff$ in $\Omega$.  Here, $B_r(x)$ denotes a ball of radius $R$ and centered at $x$,
$\dashint$ stands for averaged integral, and $\mean{\bff}_{E}$ for the mean value of $\bff$ over a set $E$. As hinted above, and will be specified below,  the spaces $\mathcal L ^{\omega (\cdot)} (\Omega )$    are a family of spaces that, depending on the choice of $\omega$, may consist of continuous functions with modulus of continuity $\omega$, of continuous functions with a slightly worse modulus of continuity, or also include discontinuous and unbounded functions, but with a degree of integrability depending on $\omega$. In the borderline case corresponding to $\omega (r)=1$,   $\mathcal L ^{\omega (\cdot)} (\Omega )$  agrees with the space $\setBMO(\Omega)$ of functions of bounded mean oscillation in $\Omega$.  Observe that, as a consequence of the John-Nirenberg lemma for functions in $\setBMO(\Omega)$, replacing the integral $ \dashint_{\Omega\cap B_r(x)}\abs{\bff-\mean{\bff}_{\Omega\cap B_r(x)}}\dy$ by $ \Big(\dashint_{ \Omega \cap B_r(x)}\abs{\bff-\mean{\bff}_{\Omega\cap B_r(x)}}^q\dy\big)^{\frac 1q}$ on the right-hand side of equation \eqref{campnorm} results in an equivalent seminorm for every $q > 1$.
%
\par
We denote by $C^{0, \omega (\cdot)}(\Omega)$ the space of  functions $\bff$ in $\Omega$ endowed with the seminorm
\begin{equation}\label{C0om}
\|\bff \|_{C^{0, \omega (\cdot)}(\Omega)}=
\sup_{
\begin{tiny}
 \begin{array}{c}{
    x, y \in \Omega} \\
x \neq y
 \end{array}
  \end{tiny}
}
\frac{|\bff (x) - \bff(y)|}{\omega (|x-y|)}\,.
\end{equation}
Plainly, if $\omega (0)=0$, then $C^{0, \omega (\cdot)}(\Omega)$ is a space of uniformly continuous functions in $\Omega$,  with modulus of continuity not exceeding $\omega$. If $\omega (r)= r^\beta$ for some $\beta \in (0, 1]$, then   $C^{0, \omega (\cdot)}(\Omega)$ coincides with the space of H\"older continuous functions with exponent $\beta$, that will simply be denoted by $C^{0,\beta}(\Omega)$, as customary.  The   space of functions obtained on replacing $\bff$ by $\nabla \bff$ in the definition of the seminorm \eqref{C0om} will be denoted by $C^{1, \omega (\cdot)}(\Omega)$. The meaning of the notation $C^{1,\beta}(\Omega)$ is analogous. Of course, when $\Omega$ is bounded, only the behavior of $\omega$ near $0$ is relevant in the definitions of $C^{0, \omega (\cdot)}(\Omega)$ and $C^{1, \omega (\cdot)}(\Omega)$.
\\ It is easily seen that 
\begin{equation}\label{embcamp}
C^{0, \omega (\cdot)}(\Omega) \to \mathcal L ^{\omega (\cdot)}(\Omega)
\end{equation}
for every parameter function $\omega$, where the arrow $\lq\lq \to "$ stands for continuous embedding. A reverse embedding holds if $\omega (r) = r^\beta$, for any $\beta \in (0, 1]$, provided that $\Omega$ is regular enough -- a bounded Lipschitz domain, for instance. However, it may fail if $\omega$ does not decay to $0$ rapidly enough. In fact, functions in $\mathcal L ^{\omega (\cdot)}(\Omega)$ need not even be (locally) bounded on $\Omega$. We shall be more precise about this issue  below.
\par
In view of our applications, an  additional property will be imposed on parameter functions $\omega$. We shall  assume
 that  the function $\omega (r)r^{-\beta_0}$ is almost decreasing
  for a suitable exponent  $\beta _0= \beta _0(n,N,p)$. Such an exponent  depends  on the optimal H\"older exponent for gradient regularity of $p$-harmonic functions, namely local solutions to the system in \eqref{eq:sysA} with $\bfF=0$. 
  This amounts
to requiring  that
\begin{equation} \label{eq:omega condition}
\omega(r) \leq c_\omega \theta^{-\beta_0} \omega(\theta r) \qquad \hbox{for
 $\theta  \in (0,1)$,}
\end{equation}
for some  constant $c_\omega$. 
Such an assumption is clearly undispensable,  due to the maximal regularity enjoyed by $p$-harmonic functions. The explicit value of $\beta_0$, in the case when $n=2$ and $N=1$, has been detected in \cite{IwMa}.
\par When writing $\partial \Omega \in X$ for some function space $X$, we mean that 
$\Omega$ is   a bounded open set, which, in a neighbourhood of each point of $\partial \Omega$,  agrees with the subgraph of a  function of    $(n-1)$ variables that belongs to $X$. Similarly, the notation  $\partial \Omega \in W^1X$ has to be understood in the sense that such function is weakly differentiable, and its weak derivatives belong to the space $X$.
\par Theorem \ref{thm:campanato}, as well as the other results of this paper, are most neatly formulated in terms of the nonlinear expression $|\nabla \bfu|^{p-2} \nabla \bfu$  appearing under the divergence operator in the system in \eqref{eq:sysA}. As shown in several recent contributions, this is a proper expression to use in the description of the regularity   of solutions to $p$-Laplacian type equations and systems -- see e.g. \cite{BCDKS, cm0, cm1, cm2,  DieKapSch11, AKM, KM1, KM2}.
%
%
%
%

\color{black}

\begin{theorem}
  \label{thm:campanato} {\rm {\bf [Regularity in Campanato spaces]}} Let $\Omega $ be a bounded open set in $\Rn$ such that $\partial \Omega \in W^1\Lcal^{\sigma (\cdot)} \cap C^{0,1}$  for some parameter function $\sigma$.  Let $\omega$ be a parameter function 
satisfying condition \eqref{eq:omega condition}. Assume that
 $\bfF \in \mathcal L ^{\omega (\cdot)} (\Omega )$ and let $\bfu$ be the solution to the Dirichlet problem
  \eqref{eq:sysA}.
There exists  constants  $\delta=\delta (p, N, \omega, \Omega)$ and $C=C(p, N, \omega, \Omega)$ such that, if 
\begin{equation}\label{balance}
\sup _{r\in (0,1)} \frac{\sigma(r)}{\omega(r)}\int^1_r\frac{\omega(\rho)}{\rho} \,\mathrm{d}\rho\leq \delta,
\end{equation}
 then $|\nabla \bfu|^{p-2} \nabla \bfu\in \mathcal L ^{\omega (\cdot)} (\Omega )$, and  
  \begin{align}\label{feb1}
    \norm{|\nabla \bfu|^{p-2} \nabla \bfu}_{\mathcal L ^{\omega (\cdot)} (\Omega )} \leq
  C\norm{\bfF}_{\mathcal L ^{\omega (\cdot)} (\Omega )}\,.
  \end{align}
\end{theorem}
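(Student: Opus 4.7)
My plan is to establish \eqref{feb1} via a Campanato decay-iteration for the quantity $V(\nabla\bfu) := |\nabla\bfu|^{p-2}\nabla\bfu$, centred at $x_0 \in \overline{\Omega}$, splitting into the interior case $B_{2r}(x_0) \subset \Omega$ and the boundary case $B_{2r}(x_0) \cap \partial\Omega \neq \emptyset$. In the interior case, the local Campanato theory of \cite{DieKapSch11} supplies directly the decay
\[
\dashint_{B_{\theta r}} \bigl|V(\nabla\bfu) - \mean{V(\nabla\bfu)}_{B_{\theta r}}\bigr|\,dy \leq c\,\theta^{\beta_0}\dashint_{B_r}\bigl|V(\nabla\bfu) - \mean{V(\nabla\bfu)}_{B_r}\bigr|\,dy + c\,\omega(r)\,\|\bfF\|_{\mathcal{L}^{\omega(\cdot)}(B_r)},
\]
for every $B_r(x_0) \subset \Omega$, exploiting \eqref{eq:omega condition} and the oscillation bound inherent in $\bfF \in \mathcal{L}^{\omega(\cdot)}$.

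The crux lies in the boundary case. Fix $x_0 \in \partial\Omega$ and represent $\partial\Omega$ locally as the graph $y_n = g(y')$ with $g \in W^1\mathcal{L}^{\sigma(\cdot)} \cap C^{0,1}$. The new idea is a scale-dependent flattening: at every radius $r$ I would replace $g$ by its affine approximation $\ell_r(y') := \mean{g}_{B_r} + \mean{\nabla g}_{B_r}\cdot(y' - x_0')$ and flatten via the bi-Lipschitz map associated with $\ell_r$. The Lipschitz part of the assumption on $\partial\Omega$ yields a chart with $r$-independent bi-Lipschitz constants, while $\nabla g \in \mathcal{L}^{\sigma(\cdot)}$ forces $\|g - \ell_r\|_{L^\infty(B_r)} \leq c\,r\,\sigma(r)$. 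After this flattening both the transported operator and the transported right-hand side differ from the flat-half-ball case by perturbations of order $\sigma(r)$ at scale $r$. On a truly flat half-ball, the $p$-harmonic comparison map $\bfh$, i.e.\ the solution of the homogeneous $p$-Laplace system with zero Dirichlet data on the flat part, satisfies the $C^{1,\beta_0}$ decay
\[
\dashint_{B^+_{\theta r}}\bigl|V(\nabla\bfh) - \mean{V(\nabla\bfh)}_{B^+_{\theta r}}\bigr|\,dy \leq c\,\theta^{\beta_0}\dashint_{B^+_{r}}\bigl|V(\nabla\bfh) - \mean{V(\nabla\bfh)}_{B^+_{r}}\bigr|\,dy,
\]
as in \cite{ChenDiBe, Uhl77}. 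A $V$-function comparison between $\bfu$ and $\bfh$ on the transported half-ball, pulled back to $\Omega$, then yields at every boundary scale
\[
\Phi(\theta r) \leq c\,\theta^{\beta_0}\Phi(r) + c\bigl[\omega(r) + \sigma(r)\bigr]\|\bfF\|_{\mathcal{L}^{\omega(\cdot)}(\Omega)},
\]
where $\Phi(r) := \dashint_{\Omega\cap B_r(x_0)}\bigl|V(\nabla\bfu)-\mean{V(\nabla\bfu)}_{\Omega\cap B_r(x_0)}\bigr|\,dy$. Iterating along $r_k = \theta^k r_0$ with $\theta \in (0,1)$ chosen so small that \eqref{eq:omega condition} renders $c\theta^{\beta_0}\omega(r)/\omega(\theta r) \leq 1/2$, and summing the resulting geometric series, the $\omega$-term produces the target bound $c\,\omega(r)\|\bfF\|_{\mathcal{L}^{\omega(\cdot)}}$, whereas the $\sigma$-term contributes
\[
c\,\omega(r)\,\|\bfF\|_{\mathcal{L}^{\omega(\cdot)}}\sup_{\rho \in (0,1)}\frac{\sigma(\rho)}{\omega(\rho)}\int_{\rho}^{1}\frac{\omega(s)}{s}\,ds,
\]
which is absorbed into the former as soon as \eqref{balance} is imposed with $\delta$ sufficiently small. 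Taking the supremum over $x_0$ and $r$ gives \eqref{feb1}.

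The main obstacle is the boundary comparison with this scale-dependent flattening. A single global chart, the usual device, would produce first-order perturbations of the $p$-Laplacian controlled only by the Lipschitz seminorm of $g$, which is uncoupled from $\sigma$ and thus far too crude to yield a sharp balance; the scale-adapted flattening replaces this by a perturbation of order $\sigma(r)$ at scale $r$, which is exactly what the iteration can digest. The price is that one must work with a family of charts indexed by $r$, control the nesting of concentric balls across different charts, and verify that the comparison estimate and the decay for $V(\nabla\bfh)$ survive the transport with constants independent of $r$. Establishing this uniformity, and tracking carefully how the perturbation of the operator interacts with the $V$-type quasi-norm so that the accumulated boundary error is precisely the quantity in \eqref{balance}, is the genuinely new technical core of the argument.
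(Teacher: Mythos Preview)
Your identification of scale-dependent flattening as the crucial device is exactly right, and matches the paper. The gap lies in your boundary decay inequality. After flattening at scale $r$, the perturbation of the $p$-Laplace operator is of order $\sigma(r)$, but it acts on $\nabla\bfu$, not on $\bfF$. The comparison with the half-ball $p$-harmonic map $\bfh$ therefore produces an error of the form
\[
c\,\sigma(r)\,\dashint_{\Omega\cap B_r(x_0)}\bigl|V(\nabla\bfu)\bigr|\,dy
\]
rather than $c\,\sigma(r)\,\|\bfF\|_{\mathcal L^{\omega(\cdot)}}$. There is no a~priori bound that converts a local average of $|V(\nabla\bfu)|$ at scale $r$ into $\|\bfF\|_{\mathcal L^{\omega(\cdot)}}$ uniformly in $r$; this average can grow as $r\to 0$. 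Consequently your recursion for $\Phi$ is self-referential, and the closing mechanism is the real content of the proof.

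The paper closes it by two ingredients you are missing. First, a boundary Gehring estimate (reverse H\"older inequality up to the boundary) is needed because the perturbation term naturally appears in an $L^{p'}$ norm and must be separated from the oscillation of $\nabla g$ via H\"older. Second, and decisively, one needs the telescoping bound
\[
\dashint_{\Omega\cap B_r}\bigl|V(\nabla\bfu)\bigr|\,dy \;\le\; c\,\overline\omega(r)\sup_{\rho\in(r,R)}\frac{1}{\omega(\rho)}\Bigl(\text{osc of }V(\nabla\bfu)\text{ on }B_\rho\Bigr)\;+\;c\,\dashint_{\Omega\cap B_R}\bigl|V(\nabla\bfu)\bigr|\,dy,
\]
where $\overline\omega(r)=\int_r^1\omega(\rho)/\rho\,d\rho$. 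It is precisely this step that manufactures the factor $\sigma(r)\overline\omega(r)/\omega(r)$ multiplying the supremum of oscillations, and it is \emph{this} factor that condition \eqref{balance} makes $\le\tfrac14$, allowing absorption. Your iteration of a $\sigma(r)$-error against $\|\bfF\|$ would, if it were correct, produce only a $\sup\sigma/\omega$ condition, not the stated one with $\overline\omega$; the appearance of $\overline\omega$ in your conclusion is not derivable from your recursion and is reverse-engineered from the statement.
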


%

\medskip

Condition \eqref{balance} in
Theorem \ref{thm:campanato} is sharp, when 
\begin{equation}\label{divint}
\int_0\frac{\omega(r)}r\, {\rm d}r=\infty\,,
\end{equation} 
 in the sense that not only the finiteness of the supremum in \eqref{balance}, but also its smallness  cannot be dispensed with.
This can be demonstrated yet  in the simplest situation when $n=2$, $N=1$ and $p=2$ to which we alluded above, namely for the scalar Dirichlet problem for the Poisson equation in the plane
\begin{equation}\label{poisson}
\begin{cases}
- \Delta u = - \divergence \bF & \quad \hbox{in $\Omega$}
\\
u =0 & \quad \hbox{on $\partial \Omega$.}
\end{cases}
\end{equation}
This is the content of Theorem \ref{sharpness}, that tells us that the conclusion of Theorem  \ref{thm:campanato} may fail if the supremum on the left-hand side of equation \eqref{balance}, though finite, is not small enough.
\par On the other hand, if instead $\omega (r)$ decays so  fast to $0$ as $r \to 0^+$ that
\begin{equation}\label{dini} \int _0 \frac{\omega (r)}r\, {\rm d} r < \infty,
\end{equation}
 then condition \eqref{balance} can still be slightly relaxed, by requiring that its left-hand side is just finite, and hence allowing for the choice $\omega = \sigma$.
This is stated in Therorem \ref{continuity} below, which also asserts that, under condition \eqref{dini}, the function $|\nabla \bfu|^{p-2} \nabla \bfu$ is uniformly continuous, with a modulus of continuity depending on $\omega$
and $p$.

\begin{theorem}\label{sharpness}{\rm {\bf [Sharpness]}}
Let $\omega \in  C^1(0, \infty)$ be any concave parameter function,
satisfying conditions \eqref{eq:omega condition} and \eqref{divint},  and such that   $\lim_{r\to 0^+}\frac{r\omega '(r)}{\omega (r)}$ exists.  
Then there exist a parameter function $\sigma$, a bounded open set $\Omega \subset \setR^2$, and a function $\bfF  \in \mathcal L ^{\omega (\cdot)} (\Omega )$ such that, if $u$ is the solution to the Dirichlet problem \eqref{poisson},
then:
\begin{equation}\label{Nov30}
\partial \Omega \in W^1\Lcal^{\sigma (\cdot)} \cap C^{0,1}
\end{equation}
and
\begin{equation}\label{Nov31}
 \sup _{r \in (0,1)}\frac{\sigma(r)}{\omega(r)}\int^1_r\frac{\omega(\rho)}{\rho} \,\mathrm{d}\rho< \infty,
\end{equation}
but
\begin{equation}\label{Nov32}
  \nabla u \notin \mathcal L ^{\omega (\cdot)} (\Omega ).
\end{equation}
\end{theorem}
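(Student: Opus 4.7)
The proof is by explicit two-dimensional construction, in the spirit of classical counterexamples in Maz'ya's book. The strategy is to choose $\partial\Omega$ whose derivative has a modulus of continuity $\sigma$ sitting right at the threshold between \eqref{balance} and \eqref{Nov31}, together with a datum $\bfF\in \mathcal L^{\omega(\cdot)}$, so that the boundary oscillations of $\phi'$ are amplified, through the Poisson/Green kernel of the flattened problem, into gradient oscillations of size strictly larger than $\omega(r)$ as $r\to 0^+$.

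Concretely, given $\omega$, set $\Lambda(r):=\int_r^1 \omega(\rho)/\rho\,\mathrm d\rho$, which diverges as $r\to 0^+$ by \eqref{divint}, and take $\sigma(r):=c\,\omega(r)/(1+\Lambda(r))$ for a suitable constant $c$. Then $\sigma$ is a parameter function satisfying \eqref{eq:omega condition} and \eqref{Nov31}. I construct a Lipschitz function $\phi\colon [-1,1]\to\setR$ with $\phi(0)=\phi'(0)=0$ and $\phi'\in \mathcal L^{\sigma(\cdot)}$ by taking $\phi'$ to be a lacunary dyadic sum $\sum_{k\ge 1}\sigma(2^{-k})\chi_k$ of unit-amplitude bumps $\chi_k$ supported at scale $2^{-k}$ near $0$. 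Let $\Omega$ be the subgraph region $\{x\in B_1(0)\colon x_2>\phi(x_1)\}$, closed off smoothly away from the origin into a bounded Lipschitz domain; this yields \eqref{Nov30}. The datum $\bfF\in \mathcal L^{\omega(\cdot)}(\Omega)$ is chosen so that its Littlewood--Paley content near the origin is in resonance, in phase and at amplitude $\omega(2^{-k})$, with the dyadic modes of $\phi'$, so that the various dyadic contributions to $\nabla u$ add constructively rather than cancel.

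To analyze $\nabla u$ near the origin, I flatten the boundary via $\Psi(x_1,x_2):=(x_1,x_2-\phi(x_1))$, which conjugates $-\Delta$ into a perturbed operator on the upper half plane with an explicit Poisson kernel. Expanding the solution as a Neumann series in the $\phi'$-dependent perturbation, and evaluating the convolutions against the half-plane Poisson kernel mode by mode, one obtains, for every $r\in(0,1)$,
\[
\dashint_{B_r(0)\cap \Omega}\bigabs{\nabla u-\mean{\nabla u}_{B_r(0)\cap \Omega}}\dy \;\gtrsim\; \omega(r)\,\Lambda(r),
\]
where the extra unbounded factor $\Lambda(r)$ stems from the logarithmic accumulation of boundary modes at all dyadic scales exceeding $r$. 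Since $\Lambda(r)\to\infty$, the ratio of the left-hand side to $\omega(r)$ is unbounded, so $\nabla u\notin \mathcal L^{\omega(\cdot)}(\Omega)$; that is, \eqref{Nov32} holds.

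The main obstacle is this oscillation lower bound. Upper bounds for Dini-type amplification from the boundary into the gradient are classical, but a matching lower bound requires that the dyadic modes of $\phi'$ accumulate constructively at the origin and are not cancelled by the harmonic background of $\nabla u$ generated by $\bfF$. The lacunarity of $\phi'$ secures the first requirement via a Littlewood--Paley style near-orthogonality of the Poisson-kernel responses at distinct scales, while the tailored choice of $\bfF$ secures the second. The two-dimensional setting is essential: it provides the explicit conformal and Poisson-kernel structure for the half-plane, without which the lower bound would be considerably harder to pin down.
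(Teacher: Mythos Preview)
Your choice of $\sigma(r)\approx\omega(r)/\Lambda(r)$ matches the paper's, but the core of your argument --- the claimed oscillation lower bound
\[
\dashint_{\Omega\cap B_r(0)}\bigl|\nabla u-\mean{\nabla u}_{\Omega\cap B_r(0)}\bigr|\,\dy\;\gtrsim\;\omega(r)\,\Lambda(r)
\]
--- is asserted, not proved, and the proposed mechanism is unlikely to deliver it. A Neumann series in the flattening perturbation expands the solution in powers of $\phi'$, i.e.\ in powers of $\sigma$. But the amplification you need is of size $\Lambda(r)=\overline\omega(r)$, which in the paper's computation arises from $\exp\bigl(\tfrac{2}{\pi}\int_r^{1/2}\sigma(\rho)/\rho\,\mathrm d\rho\bigr)\sim(\overline\omega(r))^{c\gamma}$: a \emph{multiplicative} accumulation over all scales, obtained by exponentiating the integrated boundary oscillation. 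To see this via a Neumann series you would have to sum every order and then extract a lower bound from the infinite sum; the first-order term alone is far too small (it carries a factor $\sigma(r)=\omega(r)/\Lambda(r)$, not $\Lambda(r)$). ``Littlewood--Paley near-orthogonality'' controls upper bounds, not the constructive interference you need.

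The paper avoids this difficulty altogether. It takes $u$ to be (a cutoff of) the imaginary part of the conformal map $\zeta$ from $\Omega$ onto a half-disc, so that $u$ is known \emph{exactly}, not perturbatively. Warschawski's asymptotics for $|\zeta|$ give directly $|u(\xi)|\gtrsim\rho\,(\overline\omega(\rho))^{c\gamma}$ along a ray. The contradiction then comes not from a direct oscillation estimate on $\nabla u$ but from a Sobolev-type embedding (Proposition~\ref{embedding}): if $\nabla u\in\mathcal L^{\omega(\cdot)}$, then $|u(\xi)|\lesssim\rho\,\overline\omega(\rho)$, incompatible with the lower bound once $\gamma$ is large. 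Note also that in the paper $\bfF$ vanishes near the singularity --- the counterexample is a harmonic function with zero boundary data, and the boundary geometry alone produces the blowup --- so your ``resonance between $\bfF$ and $\phi'$'' is not the operative mechanism.
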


\medskip

\begin{remark}\label{sharplocal}
{\rm 
The result of Theorem \ref{thm:campanato} has a local nature. Indeed, it will be clear from its proof   that, under the same assumptions on $\bfF$ and $\partial \Omega$, if $B_R$ is a ball centered on $\partial \Omega$, then  $|\nabla \bfu|^{p-2} \nabla \bfu \in \mathcal L ^{\omega (\cdot)} ( \Omega \cap B_R)$   for any solution of the system in \eqref{eq:sysA} in $ \Omega \cap B_{2R}$ that fulfills the Dirichlet boundary condition on $\partial \Omega \cap B_{2R}$.  The sharpness of Theorem \ref{thm:campanato} can also be shown in its local version, as  is apparent from the proof of Theorem  \ref{sharpness}. Indeed,  a local formulation of  Theorem  \ref{sharpness} allows for the choice $F\equiv 0$, and  hence applies to scalar harmonic functions that just vanish  on part of the boundary.
The other results of this paper, that rely upon Theorem \ref{thm:campanato}, also admit a local variant.
}
\end{remark}

%

\begin{remark}\label{qualitative}
{\rm  As announced in Section \ref{intro}, 
condition \eqref{balance} in Theorem \ref{thm:campanato} is qualitatively -- namely up to the constant $\delta$ -- independent of the dimension $n$ (and $N$) and of the exponent $p$. On the other hand, the optimality of this condition is shown in Theorem \ref{sharpness} under the presumably smoothest situation corrsponding to the two-dimensional linear scalar case.  Theorem \ref{thm:campanato} and Remark \ref{sharplocal} thus tell us that global regularity properties of the gradient in Campanato type spaces hold in any dimension, and for any power-nonlinearity, under boundary conditions that are qualitatively sharp still for harmonic functions in the plane. The fact that an optimal boundary regularity assumption be dimension-free is a feature that our result shares with the linear  gradient regularity theory  for Lebesgue norms in Lipschitz domains developed in  \cite{JeKe}.  The contribution \cite{ByuWan} generalizes, to  some extent, that theory to nonlinear problems in the  framework of Raifenberg-flat domains. 
}
\end{remark}

\par
 The conclusion of  Theorem
\ref{thm:campanato} corresponding to the special    choice $\omega (r)=1$ is enucleated in the next corollary.  In this case, Theorem
\ref{thm:campanato}   implies that regularity in $\setBMO (\Omega)$ of $\bfF$ is reflected into the same regularity for $|\nabla \bfu|^{p-2} \nabla \bfu$,  provided that  the derivatives of  the functions describing $\partial \Omega$ belong to a Campanato type space associated with a logarithmic parameter function $\omega$. An  additional argument ensures that a parallel conclusion holds if $\setBMO (\Omega)$ is replaced with $ \setVMO
(\Omega)$, the space of functions of vanishing mean oscillation on $\Omega$. Recall that 
a real, vector or matrix-valued integrable function  $\bff$ in $\Omega$  is said to belong to $\setVMO (\Omega)$ if
\begin{equation}\label{VMO} \lim
_{r \to 0^+} \Bigg(
\sup_{
\begin{tiny}
 \begin{array}{c}{
    x  \in \Omega} \\
0<s \leq r
 \end{array}
  \end{tiny}
}
\, \dashint _{\Omega\cap B_s(x)} |\bff - \mean{\bff}_{\Omega \cap B_s(x)}|\,
  \dy\Bigg) =0.
  \end{equation}
These results are stated in the next    corollary,  whose sharpness  is a consequence  of Theorem~\ref{sharpness}.

%
%
%

\begin{corollary}\label{bmovmo}{\rm {\bf [$\setBMO$ and $\setVMO$ regularity]}}
Let $\Omega $ be a bounded open set in $\Rn$ such that $\partial \Omega \in W^1\Lcal^{\sigma (\cdot)} \cap C^{0,1}$ for some parameter function $\sigma$ satisfying
\begin{equation}\label{vanishlog}
\lim _{r\to 0^+} \sigma (r) \log (1/r) =0\,.
\end{equation}
Assume that $\bfF \in \setBMO (\Omega )$ and let $\bfu$ be the solution to the Dirichlet problem
  \eqref{eq:sysA}.
Then $|\nabla \bfu|^{p-2} \nabla \bfu \in \setBMO(\Omega )$, and there exists a constant $C=C(p,  N,  \Omega)$ such that
  \begin{align*}
    \norm{|\nabla \bfu|^{p-2} \nabla \bfu}_{\setBMO(\Omega )} \leq
  C\norm{\bfF}_{\setBMO(\Omega )}.
  \end{align*}
\\ Moreover, if $\bfF \in \setVMO (\Omega )$, then $|\nabla \bfu|^{p-2} \nabla \bfu
\in \setVMO (\Omega)$ as well.

\end{corollary}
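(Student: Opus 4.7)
My plan is to deduce the $\setBMO$ statement from Theorem~\ref{thm:campanato} applied with the constant parameter function $\omega\equiv 1$, and then to upgrade the conclusion to $\setVMO$ via the local variant of the same estimate (Remark~\ref{sharplocal}), exploiting the fact that any $\setVMO$ function has vanishingly small $\setBMO$ seminorm on small balls.

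For the $\setBMO$ part I would first observe that $\mathcal{L}^{\omega(\cdot)}(\Omega)=\setBMO(\Omega)$ when $\omega\equiv 1$, that condition \eqref{eq:omega condition} is automatic since $\theta^{-\beta_0}\geq 1$ for $\theta\in(0,1)$, and that the left-hand side of \eqref{balance} collapses to $\sup_{r\in(0,1)}\sigma(r)\log(1/r)$. The hypothesis \eqref{vanishlog} renders this supremum finite but does not automatically push it below the threshold $\delta$ demanded by Theorem~\ref{thm:campanato}. I would bridge this gap by invoking the localized version stated in Remark~\ref{sharplocal}: by \eqref{vanishlog}, a radius $R_0\in(0,1)$ can be chosen so that $\sup_{r\in(0,2R_0)}\sigma(r)\log(2R_0/r)\leq \delta$; a finite covering of $\overline\Omega$ by balls of radius $R_0$ then reduces matters to the local theorem on each boundary ball and to the interior $\setBMO$ theory of \cite{DieKapSch11} on each interior ball, whose estimates patch into the global bound on $\norm{\abs{\nabla\bfu}^{p-2}\nabla\bfu}_{\setBMO(\Omega)}$. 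Equivalently, a rescaling $\bfu_R(x)=R^{-1}\bfu(Rx)$, $\bfF_R(x)=\bfF(Rx)$ achieves the same reduction: the rescaled boundary parameter is $\sigma_R(r)=\sigma(Rr)$, and the substitution $s=Rr$ shows that the rescaled balance condition is dominated by $\sup_{s\in(0,R)}\sigma(s)\log(1/s)$, which \eqref{vanishlog} makes smaller than $\delta$ for $R$ sufficiently small.

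For the $\setVMO$ half I would fix $\epsilon>0$, use the defining property \eqref{VMO} to select $r_0\in(0,R_0)$ with
\[
\sup_{x\in\Omega,\,0<s\leq r_0}\dashint_{\Omega\cap B_s(x)}\abs{\bfF-\mean{\bfF}_{\Omega\cap B_s(x)}}\dy<\epsilon,
\]
and observe that, since $\Omega$ is a bounded Lipschitz domain, this forces $\norm{\bfF}_{\setBMO(\Omega\cap B_{r_0}(x_0))}\leq C\epsilon$ for every $x_0\in\overline\Omega$, with $C$ independent of $x_0$. Applying the localized $\setBMO$ estimate from the previous paragraph on each such ball would then produce
\[
\dashint_{\Omega\cap B_s(x_0)}\bigabs{\abs{\nabla\bfu}^{p-2}\nabla\bfu-\mean{\abs{\nabla\bfu}^{p-2}\nabla\bfu}_{\Omega\cap B_s(x_0)}}\dy\leq C'\epsilon
\]
uniformly for $s\leq r_0/2$. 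Since $\epsilon$ and $x_0$ are arbitrary, $\abs{\nabla\bfu}^{p-2}\nabla\bfu$ satisfies \eqref{VMO}.

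The only substantive obstacle is the first step: \eqref{vanishlog} supplies finiteness but not the genuine smallness demanded by the balance condition \eqref{balance}. Overcoming it rests decisively on the localization statement of Remark~\ref{sharplocal}, and requires checking that the proof of Theorem~\ref{thm:campanato} -- already local in character near $\partial\Omega$ -- preserves its quantitative content when restricted to small boundary balls. Once this localization is granted, the bootstrap to $\setVMO$ is a soft consequence of the small-$\setBMO$-on-small-balls characterization of $\setVMO$ combined with the linear estimate \eqref{feb1}.
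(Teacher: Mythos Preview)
Your $\setBMO$ argument is sound and in fact more careful than the paper's one-line appeal to Theorem~\ref{thm:campanato}: you correctly note that \eqref{vanishlog} yields only vanishing, not smallness, of $\sigma(r)\log(1/r)$ on $(0,1)$, and you resolve this via the localization in Remark~\ref{sharplocal} (equivalently, by shrinking the reference radius in the proof of Proposition~\ref{pro:wahnsinn}). On this point you and the paper agree in spirit.

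The $\setVMO$ half, however, has a genuine gap. The local estimate you invoke---inequality~\eqref{a15} of Proposition~\ref{pro:wahnsinn}, which is what Remark~\ref{sharplocal} actually delivers---reads, with $\omega\equiv 1$,
\[
\norm{\abs{\nabla\bfu}^{p-2}\nabla\bfu}_{\setBMO(\Omega\cap B_{R/16}(x))}
\leq c\,\norm{\bfF}_{\setBMO(\Omega\cap B_{2R}(x))} + c\dashint_{\Omega\cap B_{2R}(x)}\abs{\nabla\bfu}^{p-1}\dy .
\]
The second term on the right is intrinsic to any local estimate (it encodes the trace of $\bfu$ on $\partial B_{2R}\cap\Omega$, where no boundary condition is prescribed) and it does \emph{not} tend to zero as $R\to 0$; at a Lebesgue point where $\nabla\bfu\neq 0$ it converges to $\abs{\nabla\bfu(x)}^{p-1}>0$. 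Your claimed bound on the oscillation by $C'\epsilon$ simply omits this term; with it present you obtain only a bounded oscillation, not a vanishing one.

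The paper circumvents this obstacle by a different mechanism: instead of localizing, it manufactures a tailored parameter function $\omega$ with $\omega(0^+)=0$---roughly $\omega(r)\approx\max\bigl\{\varrho(r),\sqrt{\sigma(r)\log(1/r)}\bigr\}$, where $\varrho$ is the $\setVMO$ modulus of $\bfF$, adjusted so as to satisfy \eqref{eq:omega condition}---checks that the balance condition \eqref{balance} holds for this $\omega$, and applies the \emph{global} Theorem~\ref{thm:campanato}. The conclusion $\abs{\nabla\bfu}^{p-2}\nabla\bfu\in\mathcal L^{\omega(\cdot)}(\Omega)$ together with $\omega(0^+)=0$ gives $\setVMO$ directly. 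The lower-order term is absorbed once and for all by the global energy estimate~\eqref{eq:a-priori} and never has to be made small.
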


%
%

Our enhanced result under assumption \eqref{dini} is the subject of Theorem \ref{continuity}.
Embeddings of Campanato spaces into  spaces of uniformly continuous functions, to which we alluded above, 
have a role in its proof.  They go back to   \cite{Spa65},  and tell us that, if $\Omega$ is a bounded Lipschitz domain, and the parameter function $\omega$ fulfills  condition  \eqref{dini}, then 
\begin{equation}\label{spanne1}
\mathcal L ^\omega(\Omega) \to  C^{\underline \omega}(\Omega),
\end{equation}
where $\underline \omega$ is the parameter function defined by
\begin{equation}\label{dini1}
\underline \omega (r) = \int _0^r \frac{\omega (\rho)}\rho\,\mathrm{d}\rho \, \quad
\hbox{for $r \geq 0$.}
\end{equation}
Note that, as shown in  \cite{Spa65}, if the function $\tfrac{\omega (r)}r$ is non-increasing, condition \eqref{dini} is necessary  even for the space $\mathcal L ^\omega(\Omega)$ to be included in
$L^\infty (\Omega)$. Also, the function $\underline \omega$ is optimal in \eqref{spanne1}.

\begin{theorem}\label{continuity}{\rm {\bf [Continuity estimates]}}
Let $\Omega$ be a bounded open set in $\Rn$ such that $\partial \Omega \in W^1\Lcal^{\omega (\cdot)}$ for some parameter function  $\omega$ 
satisfying conditions \eqref{eq:omega condition} and \eqref{dini}.
Assume that
 $\bfF \in \mathcal L ^{\omega (\cdot)} (\Omega )$ and let $\bfu$ be the solution to the Dirichlet problem
  \eqref{eq:sysA}. 
Then $|\nabla \bfu|^{p-2} \nabla \bfu\in \mathcal L ^{\omega (\cdot)} (\Omega )$, and  inequality \eqref{feb1} holds.
 \\ Moreover, if 
 $\underline \omega :
[0, \infty ) \to [0, \infty )$ is the parameter function given by \eqref{dini1},
then $|\nabla \bfu|^{p-2} \nabla \bfu \in C^{0, {\underline \omega} (\cdot)}( \Omega )$, 
and there exists a   constant $C=C(p, N, \omega, \Omega)$ such that
  \begin{align}\label{dec1030}
    \norm{|\nabla \bfu|^{p-2} \nabla \bfu}_{C^{0, {\underline \omega} (\cdot)}( \Omega )} \leq
  C
\norm{\bfF}_{
 \mathcal L^{\omega(\cdot)} (\Omega )}\,.
%
  \end{align}
\\ In particular, the same conclusions hold if $\bfF \in {C^{0, {\omega (\cdot)}} (\Omega )}$, and $\norm{\bfF}_{
 \mathcal L^{\omega(\cdot)} (\Omega )}$ is replaced with the stronger norm $\norm{\bfF}_{C^{0, {\omega (\cdot)}} (\Omega )}$ in inequalities  \eqref{feb1}  and \eqref{dec1030}.
\end{theorem}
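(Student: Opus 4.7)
The strategy is to reduce to Theorem \ref{thm:campanato} by localization, exploiting the fact that under the Dini condition \eqref{dini}, the quantity appearing in \eqref{balance} can be made as small as we like by restricting to sufficiently small balls.

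First I would observe that, by \eqref{dini}, for any prescribed $\delta >0$ there exists $R_0=R_0(\omega,\delta)>0$ such that
\[
\int_0^{R_0}\frac{\omega(\rho)}{\rho}\,\mathrm{d}\rho\leq\delta.
\]
Choosing $\sigma=\omega$ and restricting the sup in \eqref{balance} to radii $r\in(0,R_0)$ with $R_0$ replacing $1$ yields
\[
\sup_{r\in(0,R_0)}\frac{\omega(r)}{\omega(r)}\int_r^{R_0}\frac{\omega(\rho)}{\rho}\,\mathrm{d}\rho\;\leq\;\int_0^{R_0}\frac{\omega(\rho)}{\rho}\,\mathrm{d}\rho\;\leq\;\delta,
\]
so the smallness condition \eqref{balance} is met on any ball of radius $R_0$. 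Here I also note that the Dini condition combined with the Spanne embedding \eqref{spanne1} applied to the boundary charts forces the defining functions of $\partial\Omega$ to belong to $C^1$ with modulus of continuity $\underline\omega$, so the additional assumption $\partial\Omega\in C^{0,1}$ of Theorem \ref{thm:campanato} is automatic and need not be imposed separately.

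Next, I would apply the local version of Theorem \ref{thm:campanato} described in Remark \ref{sharplocal}: centered at each boundary point $x_0\in\partial\Omega$, the theorem on the ball $B_{R_0}(x_0)$ gives
\[
\norm{|\nabla\bfu|^{p-2}\nabla\bfu}_{\mathcal L^{\omega(\cdot)}(\Omega\cap B_{R_0}(x_0))}\leq C\,\norm{\bfF}_{\mathcal L^{\omega(\cdot)}(\Omega)}+\text{energy terms},
\]
with $C$ independent of $x_0$. For interior balls, I would invoke the interior Campanato gradient theory for the $p$-Laplace system established in \cite{DieKapSch11}, which yields the analogous estimate on any ball compactly contained in $\Omega$. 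A finite covering of $\overline{\Omega}$ by such boundary and interior balls, together with the elementary fact that oscillations on balls of radius $\geq R_0$ are controlled by the energy of the solution (and hence by $\norm{\bfF}_{L^{p'}(\Omega)}\leq c\norm{\bfF}_{\mathcal L^{\omega(\cdot)}(\Omega)}$), produces the global estimate \eqref{feb1}. The careful verification that oscillations on balls of arbitrary size $r$ are dominated by the local estimates on the cover (for $r\leq R_0$) and by a global energy estimate (for $r>R_0$) is the step where the most bookkeeping is needed; here the $\omega$-increasing property \eqref{eq:omega condition} ensures that the factor $\omega(R_0)$ lost at large scales can be absorbed.

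For the second part of the theorem, I would invoke the classical Spanne embedding \eqref{spanne1}, whose hypothesis \eqref{dini} is precisely what we have: since $|\nabla\bfu|^{p-2}\nabla\bfu\in \mathcal L^{\omega(\cdot)}(\Omega)$ by the first part, we conclude $|\nabla\bfu|^{p-2}\nabla\bfu\in C^{0,\underline\omega(\cdot)}(\Omega)$ with the corresponding bound \eqref{dec1030}. Finally, the concluding assertion when $\bfF\in C^{0,\omega(\cdot)}(\Omega)$ is immediate from the elementary embedding \eqref{embcamp}, which gives $\norm{\bfF}_{\mathcal L^{\omega(\cdot)}(\Omega)}\leq \norm{\bfF}_{C^{0,\omega(\cdot)}(\Omega)}$ and allows replacement of the Campanato norm by the H\"older norm on the right-hand sides of \eqref{feb1} and \eqref{dec1030}. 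The main obstacle in this program is the bookkeeping in the covering step: one must check that the constants produced by the local Campanato estimates glue together to yield a genuine global Campanato seminorm bound, which requires treating carefully the transition between small-scale (covered by local estimates) and large-scale (handled by energy) oscillations.
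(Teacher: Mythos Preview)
Your approach is correct and is genuinely different from the paper's. You exploit the Dini condition \eqref{dini} directly, by choosing $R_0$ so small that $\int_0^{R_0}\omega(\rho)/\rho\,\mathrm{d}\rho\le\delta$; with $\sigma=\omega$ the smallness requirement of Proposition~\ref{pro:wahnsinn} is then satisfied at all scales below $R_0$, and a covering plus a large-scale energy bound finishes the job. One remark: this step uses a slightly sharper localized form of \eqref{balance} than Remark~\ref{sharplocal} states (the upper limit $1$ in the integral must be replaced by the local radius), but this is indeed what the proof of Proposition~\ref{pro:wahnsinn} actually yields via Lemma~\ref{lem:iter}, so the gap is only at the level of citation, not of mathematics.

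The paper follows a less direct route. It first constructs an auxiliary parameter function $\omega_1$ with $\omega\le c\,\omega_1$, still satisfying \eqref{dini} and \eqref{eq:omega condition}, and with $\omega(r)/\omega_1(r)\to0$ as $r\to0^+$; for the pair $(\sigma,\omega)=(\omega,\omega_1)$ the global smallness \eqref{balance} then holds, so Theorem~\ref{thm:campanato} applied verbatim gives $|\nabla\bfu|^{p-2}\nabla\bfu\in\mathcal L^{\omega_1(\cdot)}(\Omega)$, whence $\nabla\bfu\in L^\infty(\Omega)$ by the Spanne embedding. With this a~priori boundedness in hand, the paper invokes a separate variant of the pointwise estimate (Proposition~\ref{pro:wahnsinn1}) that dispenses with the smallness in \eqref{balance} altogether, requiring only the trivially satisfied bound $\sigma(r)/\omega(r)\le C/\omega(R)$, and this yields the sharp $\mathcal L^{\omega(\cdot)}$ conclusion. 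Your route is more elementary and avoids both the auxiliary $\omega_1$ and the additional Proposition~\ref{pro:wahnsinn1}; the paper's route, in exchange, isolates the $L^\infty$ gradient bound as an explicit intermediate result and produces the independent tool Proposition~\ref{pro:wahnsinn1}, which may be of use in situations where boundedness of the gradient is known from other sources.
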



The specific choice  $\omega (t) = t^\beta$ in Theorem
\ref{continuity}, with $\beta<\beta_0$, yields
the following H\"older continuity result.

\begin{corollary}\label{holder}{\rm {\bf [H\"older continuity]}}
Let $\Omega$ be a bounded open set in $\Rn$ such that $\partial \Omega\in C^{1,\beta}$  for some  $\beta \in (0, \beta_0)$. Assume that $\bfF \in C^{0, \beta} ( \Omega )$ and let $\bfu$ be the solution to the Dirichlet problem  \eqref{eq:sysA}.     Then $|\nabla \bfu|^{p-2} \nabla \bfu \in C^{0,  \beta}(\Omega )$, and there exists a  constant   $C=C(p,N, \beta, \Omega)$ such that
  \begin{align*}
    \norm{|\nabla \bfu|^{p-2} \nabla \bfu}_{C^{0,   \beta}(\Omega )} \leq
  C\norm{\bfF}_{C^{0, \beta}(\Omega )}\,.
  \end{align*}
\end{corollary}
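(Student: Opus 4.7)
The plan is to obtain Corollary \ref{holder} as an immediate specialization of Theorem \ref{continuity} to the power parameter function $\omega(r) = r^\beta$ with $\beta \in (0, \beta_0)$.

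First I would verify that this choice of $\omega$ satisfies the structural hypotheses of Theorem \ref{continuity}. For condition \eqref{eq:omega condition}, note that $\omega(r) r^{-\beta_0} = r^{\beta - \beta_0}$ is (strictly) decreasing in $r$ since $\beta < \beta_0$, so the inequality $\omega(r) \leq \theta^{-\beta_0} \omega(\theta r)$ holds with $c_\omega = 1$. For the Dini condition \eqref{dini}, one has
\begin{equation*}
\int_0 \frac{\omega(r)}{r}\,\mathrm{d}r = \int_0 r^{\beta-1}\,\mathrm{d}r < \infty
\end{equation*}
because $\beta > 0$, and in fact $\underline{\omega}(r) = \int_0^r \rho^{\beta-1}\,\mathrm{d}\rho = \tfrac{1}{\beta}\,r^\beta$, so $\underline{\omega}$ is equivalent, up to the multiplicative constant $1/\beta$, to the power function $r \mapsto r^\beta$. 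Consequently $C^{0,\underline{\omega}(\cdot)}(\Omega) = C^{0,\beta}(\Omega)$ with equivalent seminorms.

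Next I would check the assumptions on the data. The hypothesis $\partial \Omega \in C^{1,\beta}$ means that the local graph functions describing $\partial \Omega$ are weakly differentiable with derivatives in $C^{0,\beta}$. By the elementary embedding \eqref{embcamp} applied to these derivatives, $\partial \Omega \in W^1\mathcal{L}^{\omega(\cdot)}$, with the Lipschitz condition $C^{0,1}$ following trivially from $C^{1,\beta}$. Likewise, the assumption $\bfF \in C^{0,\beta}(\Omega) = C^{0,\omega(\cdot)}(\Omega)$ ensures that the second, strengthened formulation of Theorem \ref{continuity} applies.

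Finally I would invoke Theorem \ref{continuity} in its strengthened form: the conclusion yields $|\nabla \bfu|^{p-2}\nabla \bfu \in C^{0,\underline{\omega}(\cdot)}(\Omega) = C^{0,\beta}(\Omega)$, together with the estimate
\begin{equation*}
\norm{|\nabla \bfu|^{p-2}\nabla \bfu}_{C^{0,\beta}(\Omega)} \leq C\,\norm{\bfF}_{C^{0,\beta}(\Omega)},
\end{equation*}
where the constant $C$ depends on $p$, $N$, $\beta$ and $\Omega$ (the dependence on $\omega$ collapsing to dependence on $\beta$ through the constants $c_\omega = 1$ and $1/\beta$). There is essentially no obstacle here: the work is entirely in the preceding Theorem \ref{continuity}, and the present corollary amounts to checking the above compatibilities and reading off the identification $\underline{\omega}(r) \simeq r^\beta$.
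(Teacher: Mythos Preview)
Your proposal is correct and matches the paper's approach exactly: the paper states just before the corollary that the result follows by specializing Theorem~\ref{continuity} to $\omega(t)=t^\beta$, and your argument carries out precisely this specialization, verifying conditions \eqref{eq:omega condition} and \eqref{dini} and identifying $\underline{\omega}(r)=\tfrac{1}{\beta}r^\beta$ so that $C^{0,\underline{\omega}(\cdot)}(\Omega)=C^{0,\beta}(\Omega)$.
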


Theorems \ref{thm:campanato}  and \ref{continuity} are  in fact consequences of  stronger pointwise estimates, of 
potential  use for other issues, between a sharp maximal function of $|\nabla \bfu|^{p-2} \nabla \bfu$ and a sharp maximal function of $\bfF$ -- see Propositions \ref{pro:wahnsinn} and \ref{pro:wahnsinn1}.
Our approach to these estimates entails the choice of  appropriate local coordinates, where the boundary of the
domain is flat, in the sense that the domain is locally mapped into a
half-ball, with a proper radius, after changing variables. Suitable decay oscillation estimates have to be established as the radii of the relevant balls tend to zero.
 Due to the minimal regularity required on $\partial \Omega$,  we have 
 to develop a new strategy, based on the selection of  an ad hoc coordinate systems taylored for each scale of the radii. This is a pivotal step, that makes it possible to derive sharp oscillation bounds, and is flexible enough for prospective implementations in other questions in the  global regularity theory  of elliptic boundary value problems.
 Thanks to a suitable
continuation of the differential operator and of the solution beyond
the flattened boundary, the problem is reduced to inner regularity.
However, the new differential operator is not anymore the $p$-Laplacian, and, in particular, it is not of Uhlenbeck type. Therefore, 
standard inner local regularity results cannot be applied.
 A  subsequent task is thus to derive local estimates for  perturbed systems.
The idea is that our regularity
assumption on $\partial \Omega$ allows for the perturbed
differential operator to be locally still sufficiently close to the
original one for the regularity of solutions not to be destroyed. Our auxiliary result in this connection is also of possible independent interest.

\section{A decay estimate near a flat boundary
}

The present section is devoted to a decay estimate for the gradient of solutions to the system in \eqref{eq:sysA} satisfying the Dirichlet boundary condition locally on a flat boundary. This is the content of Proposition \ref{cor:decay-hs}. 
%
\par 
We begin our discussion by fixing  a few notations and conventions. The relation $\lq\lq \approx "$ between two real-vaued expressions means that they are bounded by each other, up to positive multiplicative constants depending on quantities to be specified.
\\ Given $m, n  \in \mathbb N$, we denote by $\setR^{m\times n}$ the space of $m \times n$ matrices, by 
$\lq\lq \cdot"$  the standard scalar product in $\setR^{m \times n}$, and by 
 $\abs{\, \cdot\, }$  the induced norm on $\setR^{m \times n}$. 
\\ A point $x\in \mathbb R^n$ will be regarded as a column vector, namely an element of $\setR^{n\times 1}$,
although, for ease of notation, we shall write  $(x_1, \dots , x_n)$ when its components are relevant. We also set $x'=(x_1, \dots , x_{n-1})\in \mathbb R^{n-1}$, whence $x=(x', x_n)$ for $x\in \setR^n$. One has  that
$$ x \cdot y = x^t y = {\rm tr}(xy^t) \qquad \hbox{for  $x, y \in \mathbb R^n$,}$$
where the apex  $\lq\lq\, t\, "$ stands for transpose, and $\lq\lq {\rm tr}"$ for trace. More generally, if $\bfB, \bfC \in \mathbb R^{m\times n}$, then 
$$\bfB \cdot \bfC = {\rm tr}(\bfB \bfC^t) =  {\rm tr}(\bfB^t \bfC).$$
Also,  if $\bfB, \bfC \in \mathbb R^{m\times N}$ and $\bfP,\bfQ\in \setR^{N\times n}$, then
\begin{align}
\label{eq:matrix1}
\bfB \bfP \cdot \bfC \bfQ=\trace(\bfB \bfP ( \bfC \bfQ)^t)=\trace (\bfB \bfP \bfQ^t \bfC^t )= \bfB \bfP \bfQ^t\cdot \bfC.
\end{align}
Given a function
 $$w : \setR^{n} \to \setR,$$
 its gradient $\nabla w$ is   a row vector in $\setR^n$, namely  $\nabla w \in \setR^{1 \times n}$. 
More generally, if 
$$\bfw : \setR^n \to \setR^N,$$
then $\nabla \bfw$ is the matrix in   $\setR^{N\times n}$ whose rows are the gradients of the components $w^1, \, \dots \, , w^N$ of $\bfw$.
With these conventions in place, if $\bfpsi : \setR^d \to \setR^n$, then
$$\nabla (\bfw \circ \bfpsi)= (\nabla \bfw\circ\bfpsi) \nabla \bfpsi,$$
where the product on the right-hand side is just the matrix product. In particular, if $\bfQ \in \setR^{d\times n}$, and 
$$\bfpsi (y)= \bfQ y \qquad \hbox{for $y \in \mathbb R^d$,}$$
then $\nabla \bfpsi =\bfQ$, and 
$$\nabla (\bfw (\bfQ y))= \nabla \bfw (\bfQ y)\bfQ \qquad \hbox{for $y \in \mathbb R^d$.}$$
Notice also that, if $\eta : \setR^n \to \setR$ and $\bfw : \setR^n \to \setR^N$, then
$$\nabla (\eta \bfw)= \eta \nabla \bfw + \bfw \nabla \eta,$$
where $ \bfw \nabla \eta \in \setR^{N\times n}$ is the matrix product between $\bfw \in \setR^{N\times 1}$ and $\nabla \eta \in \setR^{1\times n}$. Therefore, $ \bfw \nabla \eta $ agrees with the tensor product $\bfw \otimes \nabla \eta$.
\\
Given $p >1$, define the function $\bfA :  \setR^{N\times n} \to  \setR^{N\times n}$ as 
\begin{equation}\label{A}
\bfA (\bfQ) = |\bfQ |^{p-2}\bf Q \quad \hbox{for $\bfQ \in \setR^{N\times n}$\,.}
\end{equation}
If $\bfT \in \setR^{n\times n}$ we also denote by $\bfA _{\bfT} :  \setR^{N\times n} \to  \setR^{N\times n}$ the function defined by
\begin{equation}\label{AT}\bfA _{\bfT}(\bfQ) = \bfA(\bfQ \bfT)(\bfT)^t \quad \hbox{for $\bfQ \in \setR^{N\times n}$\,.}
\end{equation}

\smallskip
\noindent

Let us now recall a few notions of solutions.
Let $\bfF\in L^{p'}(\Omega)$. A function $\bu \in W^{1,p}_0(\Omega)$ is called a weak solution to problem \eqref{eq:sysA} if 
\begin{equation}\label{weaksol}
\int_{\Omega} \bfA(\nabla \bfu)\cdot \nabla \bfvarphi \dx = \int_{\Omega} \bfF \cdot \bfvarphi\dx
\end{equation}
for every function $\bfvarphi \in W^{1,p}_0(\Omega)$.
\\ 
Assume next that  $\bfF\in L^{p'}_{\rm loc}(\Omega)$. A function 
$\bfu \in W^{1,p}_{\rm loc}(\Omega)$ is called   a local weak solution to the system
 \begin{equation}\label{sysloc} -\divergence(\bfA (\nabla \bfu)) = -\divergence
  \bfF\,  \quad \hbox{in $\Omega$}
\end{equation}
if 
\begin{equation}\label{weaksolloc}
\int_{\Omega'} \bfA(\nabla \bfu)\cdot \nabla \bfvarphi \dx = \int_{\Omega'} \bfF \cdot \bfvarphi\dx
\end{equation}
for every function $\bfvarphi \in W^{1,p}_0(\Omega ')$ with $\Omega' \subset\subset \Omega$.
\\ Let $B_R$ be a ball centered on $\partial \Omega$, with radius $R$, and  let $\bfF\in L^{p'}(\Omega\cap B_R)$.  Assume that $\bu$ belongs to
 the closure in $W^{1,p}(\Omega\cap B_R)$ of the space of 
those functions  in $C^\infty (\Omega\cap B_R)$ that vanish in a neighbourhood of  $\partial \Omega$. The $\bfu$ is called a weak solution to the problem
%
\begin{align}
 \label{dirloc}
 \begin{cases}
-\divergence(\bfA (\nabla \bfu))=-\divergence \bfF  & \quad \text{ in }\Omega \cap B_R
\\
\bfu =0   & \quad \text{ on } \partial \Omega \cap B_R
\end{cases}
\end{align}
if
\begin{equation}\label{weaksolBR}
\int_{\Omega \cap B_R}\bfA(\nabla \bfu)\cdot \nabla \bfvarphi \dx = \int_{\Omega \cap B_R} \bfF \cdot \bfvarphi\dx
\end{equation}
for every function $\bfvarphi  \in W^{1,p}_0(\Omega \cap B_R)$.

\smallskip
\noindent

Given 
a matrix $\bfT \in \setR^{n \times n}$, with ${\rm det} \bfT \neq 0$, and a function $\bfw : \Omega \to \setR^N$, define the function
$$\overline \bfw : \bfT \Omega \to \setR^N$$ as 
\begin{equation}\label{wbar}\overline \bfw (y) = \bfw (\bfT^{-1}y) \quad \hbox{for $y \in \bfT \Omega$.}
\end{equation}
Then $$\nabla \overline \bfw (y)= \nabla \bfw (\bfT^{-1} y)\bfT^{-1} \quad \hbox{for $y \in \bfT \Omega$.}$$
Let  $\bfA _{\bfT^{-1}}$ be function defined as in \eqref{AT}, with $\bfT$ replaced by $\bfT^{-1}$.
Assume that $\bfu$ is a local solution to the system 
\begin{align}\label{eq:pert}
-\divergence(\bfA_{\bfT^{-1}}(\nabla \bfu))=-\divergence \bfF \quad \text{ in }\Omega.
\end{align}
Let $\overline{\bfu}$ and 
$\overline\bfF$ be the functions built upon $ \bfu$ and $\bfF$ as in \eqref{wbar}.
We claim  that the function
$\overline{\bfu}$ is a local solution to the system
\begin{align}\label{eq:pertbar}
-\divergence(\bfA(\nabla \overline\bfu))=-\divergence(\overline\bfF\bfT^t)\quad \text{ in }\bfT \Omega\,.
\end{align}
As a consequence, local results available for the p-Laplacian are translated to systems with constant coefficients of the form \eqref{eq:pert}. Our claim follows from the following chain, that,
owing to \eqref{eq:matrix1},  holds for any function $\bfphi \in W^{1,p}_0(\Omega)$:  
\begin{align}\label{constbis}
\int_{\bfT \Omega}& \bfA(\nabla \overline{\bfu}(y))\cdot \nabla \overline{\bfphi}(y)\dy =
\int_{\bfT \Omega}{\rm tr}[\bfA(\nabla \overline{\bfu}(y)) \nabla \overline{\bfphi}(y)^t]\dy
\\ \nonumber & =
\int_{\bfT \Omega}{\rm tr}[\bfA(\nabla {\bfu}(\bfT^{-1}y)\bfT^{-1}) ( \nabla{\bfphi} (\bfT^{-1}y) \bfT^{-1}) ^t]\dy
\\ \nonumber &
=
\int_{\bfT \Omega}{\rm tr}[\bfA(\nabla {\bfu}(\bfT^{-1}y)\bfT^{-1})  (\bfT^{-1})^t \nabla{\bfphi} (\bfT^{-1}y) ^t]\dy
\\ \nonumber & = \int_{\Omega} {\rm tr}[\bfA(\nabla {\bfu}(x)\bfT^{-1})  (\bfT^{-1})^t \nabla{\bfphi} (x) ^t]|{\rm det} \bfT|\, \dx
\\ \nonumber &
= \int_{\Omega} {\rm tr}[\bfA_{\bfT^{-1}}(\nabla {\bfu}(x))   \nabla{\bfphi} (x) ^t]|{\rm det} \bfT|\, \dx
= \int_{\Omega}  \bfA_{\bfT^{-1}}(\nabla {\bfu}(x))  \cdot \nabla{\bfphi} (x) |{\rm det} \bfT|\, \dx
\\ \nonumber &
= \int_{\Omega} \bfF (x)  \cdot \nabla{\bfphi} (x) |{\rm det} \bfT|\, \dx
= \int_{\bfT \Omega} \bfF (\bfT^{-1}y)  \cdot \nabla{\bfphi} (\bfT^{-1}y)  \, \dy
\\ \nonumber &
= \int_{\bfT \Omega} \overline \bfF ( y)  \cdot \nabla \overline {\bfphi} (y) \bfT \, \dy
= \int_{\bfT \Omega} {\rm tr} [\overline \bfF ( y)   (\nabla \overline {\bfphi} (y) \bfT)^t ]\, \dy
\\ \nonumber &
= \int_{\bfT \Omega} {\rm tr} [\overline \bfF ( y)   \bfT ^t \nabla \overline {\bfphi} (y)^t] \, \dy
 = \int_{\bfT \Omega} \overline \bfF ( y)   \bfT ^t \cdot \nabla \overline {\bfphi} (y)  \, \dy.
\end{align}
%
%
Now, assume that the matrix $\bfT \in \mathbb R^{n\times n}$ is positive definite, with smallest eigenvalue $\lambda$ and largest eigenvalue $\Lambda$. In particular
\begin{align}
\label{ibp}
B_{\lambda r}(0)\subset \bfT(B_{r}(0))\subset B_{\Lambda r}(0)\text{ and }B_{\frac{r}{\Lambda}}(0)\subset \bfT^{-1}(B_{r}(0))\subset B_{\frac{r}{\lambda }}(0)
\end{align}
for every $r>0$. 
 We consider solutions to systems of type \eqref{eq:pert} in a half-ball, subject to zero boundary conditions on the flat part of its boundary. 
Precisely, define 
$$
H^+=\Big\{x\in \setR^n:  x_n >0\Big\},
$$
and 
\begin{equation}\label{HB}
H_{\bfT^{-1}}=\Big\{x\in \setR^n: (\bfT^{-1} x)_n >0\Big\},
\end{equation}
and let $\bfu$ be a weak solution to the problem
\begin{align}
 \label{eq:halfspace1}
 \begin{cases}
-\divergence(\bfA_{\bfT^{-1}}(\nabla \bfu))=-\divergence \bfF  & \quad \text{ in }H^+\cap B_r(0)
\\
\bfu =0   & \quad \text{ on }\set{x_n=0}\cap B_r(0).
\end{cases}
\end{align}
Choosing $\Omega= H^+\cap B_r(0)$ in
\eqref{eq:pert} tells us that $ \overline\bfu$ is a weak solution to the problem
\begin{align}
 \label{eq:halfspace2}
 \begin{cases}
-\divergence(\bfA(\nabla \overline\bfu))=-\divergence(\overline\bfF \bfT^t) & \quad \text{ in }H_{\bfT^{-1}}\cap B_{\lambda r}(0)
\\
\overline\bfu =0  & \quad \text{ on }\set{(\bfT^{-1} y)_n=0}\cap B_{\lambda r}(0).
\end{cases}
\end{align}
Since solutions are invariant under orthonormal transformations, we can make use of an even reflection with respect to the half-space $H_{ \bfT^{-1}}$, and obtain a local solution in an entire ball. To this purpose,  consider the linear map from $H_{\bfT^{-1}}$  into $H^+$ associated with an orthonormal matrix $\bfQ \in \mathbb R^{n \times n}$. 
%
%
%
%
%
%
%
%
%
 Also, we define $\widehat{\bfu}$ and $\widehat{\bfF}$ as the composition of the inverse of this transformation with $\overline{\bfu}$ and $\overline{\bfF}$, respectively. Namely, we define $\widehat{\bfu}, \widehat{\bfF} : H^+\cap B_{\lambda r}(0) \to \mathbb R^N$ as 
%
%
$$\widehat{\bfu}(z)=\overline{\bfu}(\bfQ^t z)=\bfu(\bfT^{-1}\bfQ^tz)$$ and 
$$\widehat{\bfF}(z)=\overline{\bfF}(\bfQ^t z)=\bfF({\bfT^{-1}\bfQ^t}z)$$
for $z \in H^+\cap B_{\lambda r}(0)$ .
On making use of the fact that  $\bfQ^t=\bfQ^{-1}$, the argument above implies that $\widehat\bfu$ is a weak solution to the problem
\begin{align}
 \label{eq:halfspace3}
 \begin{cases}
-\divergence(\bfA(\nabla \widehat\bfu))=-\divergence(\widehat{\bfF}\bfT^t\bfQ^t)& \quad \text{ in }H^+\cap B_{\lambda r}(0)
\\
\widehat\bfu=0 & \quad \text{ in }\set{x_n=0}\cap B_{\lambda r}(0).
\end{cases}
\end{align}
Let  $\bfR\in \setR^{N\times n}$ be the matrix given by $\bfR =\text{diag}(1,...,1,-1)$. Set
$$B_{r}^+(0)= H^+ \cap B_{r}(0) \quad \hbox{and} \quad B_{r}^-(0) = B_{r}(0) \setminus B_{r}^+(0)\quad \hbox{for $r>0$\,.}$$
The function $\bfv : B_{\lambda r}(0) \to \mathbb R^N$, defined as
\begin{align*}
\bfv(x)= \begin{cases}
\widehat\bfu(x',x_n) & \quad \textrm{for } (x',x_n)\in B_{\lambda r}^+(0)\\
-\widehat\bfu(x',-x_n) & \quad \textrm{for } (x',x_n)\in B_{\lambda r}^-(0),
\end{cases}
\end{align*}
belongs to $W^{1,p}_{\rm loc}(B_{\lambda r}(0))$,   and 
\begin{align*}
\nabla \bfv(x)=\ \begin{cases}
\nabla\hat\bfu(x',x_n) & \quad \textrm{for }  (x',x_n)\in B_{\lambda r}^+(0)\\
-\nabla \widehat\bfu(x',-x_n) \bfR & \quad\textrm{for }(x',x_n)\in B_{\lambda r}^-(0).
\end{cases}
\end{align*}
Let $\bfC\in \RNn $. We will show that, if $\bfG :  B_{\lambda r}(0) \to \mathbb R^{N\times n}$ is defined as 
\begin{align*}\bfG(x)=\begin{cases}
\widehat{\bfF}(x',x_n)\bfT^{t}\bfQ^t-\bfC& \quad \textrm{for } (x',x_n)\in B_{\lambda r}^+(0)
\\
-\big(\widehat{\bfF}(x',-x_n)\bfT^{t}\bfQ^t-\bfC\big)\bfR  & \quad \textrm{for } (x',x_n)\in B_{\lambda r}^-(0),
\end{cases}
\end{align*}
then 
 $\bfv$ is a  local solution to the system
\begin{align}
\label{eq:local}
-\divergence(\bfA(\nabla \bfv))=-\divergence \bfG \quad \text{ in }B_{\lambda r}(0).
\end{align}
%
To verify this assertion,  note that  any function $\bfphi\in C^{\infty}_0(B_{\lambda r}(0))$ can be decomposed as
 $$
 \bfphi(x)=\bfphi_1(x)+\bfphi_2(x) \qquad \hbox{for $x \in B_{\lambda r}(0)$,}
 $$
where we have set 
$$\bfphi_1(x', x_n)= 
\frac{\bfphi(x',x_n)+\bfphi(x',-x_n)}{2}\quad \hbox{and} \quad  \bfphi_2(x', x_n)=\frac{\bfphi(x',x_n)-\bfphi(x',-x_n)}{2} $$
for $(x', x_n) \in B_{\lambda r}(0)$.
In particular,
 \[
 \nabla \bfphi_1(x',x_n)= \nabla \bfphi_1(x',-x_n)\bfR\quad \text{ and } \quad \nabla \bfphi_2(x',x_n)= -\nabla \bfphi_2(x',-x_n)\bfR
 \]
for $(x', x_n) \in B_{\lambda r}(0)$. Also,
 $\bfphi_2 \in W^{1,p}_0(B_{\lambda r}^+(0))$. Hence, 
\begin{align*}
&\int_{B_{\lambda r}(0)}\big(\bfA(\nabla \bfv)-\bfG\big)\cdot \nabla \bfphi\dx
\\
&\quad
 =\int_{B_{\lambda r}^+(0)} \big(\bfA(\nabla \bfv)-\bfG\big) \cdot \nabla \bfphi_1\dx
+
\int_{B_{\lambda r}^-(0)} \big(\bfA(\nabla \bfv)-\bfG\big)\cdot \nabla \bfphi_1\dx
\\
&\qquad +\int_{B_{\lambda r}^+(0)} \big(\bfA(\nabla \bfv)-\bfG\big)\cdot\nabla \bfphi_2\dx
+\int_{B_{\lambda r}^-(0)} \big(\bfA(\nabla \bfv)-\bfG\big)\cdot\nabla \bfphi_2\dx
\\
&\quad 
= \int_{B_{\lambda r}^+(0)} \big(\bfA(\nabla \bfv)-\bfG\big) \cdot \nabla \bfphi_1\dx
\\
&\qquad +\int_{B_{\lambda r}^-(0)} -\big(\bfA(\nabla \bfv(x',-x_n)) -(\widehat{\bfF}(x',-x_n)\bfT^{t\color{black}}\bfQ^t-\bfC)\big)\bfR \cdot \nabla \bfphi_1(x',x_n)\dx'\dx_n
\\
&\qquad+\int_{B_{\lambda r}^-(0)} -\big(\bfA(\nabla \bfv(x',-x_n)) -(\widehat{\bfF}(x',-x_n)\bfT^{t \color{black}}\bfQ^t-\bfC)\big)\bfR \cdot \nabla \bfphi_2(x',x_n)\dx'\dx_n
\\
&\quad 
= \int_{B_{\lambda r}^+(0)} \big(\bfA(\nabla \bfv)-\bfG\big) \cdot \nabla \bfphi_1\dx
\\
&\qquad
-\int_{B_{\lambda r}^-(0)} \big(\bfA(\nabla \bfv(x',-x_n)) -(\widehat{\bfF}(x',-x_n)\bfT^{t}\bfQ^t-\bfC)\big)\bfR \cdot \nabla \bfphi_1(x',-x_n)\bfR\dx'\, \dx_n
\\
&\qquad+\int_{B_{\lambda r}^-(0)} \big(\bfA(\nabla \bfv(x',-x_n)) -(\widehat{\bfF}(x',-x_n)\bfT^{t}\bfQ^t-\bfC)\big)\bfR \cdot \nabla \bfphi_2(x',-x_n)\bfR\dx'\, \dx_n\,.
\end{align*}
Note that in this chain we have made use of \eqref{eq:matrix1},  of the fact that $\bfR=\bfR^t=\bfR^{-1}$, and of the equality
\begin{equation}\label{a1}
\int_{B_{\lambda r}^+(0)} \big(\bfA(\nabla \bfv)-\bfG\big)\cdot\nabla \bfphi_2\dx =0\,,
\end{equation}
which holds since $\bfv = \widehat \bfu$ in $B_{\lambda r}^+$ and $\widehat \bfu$ is a solution to problem  \eqref{eq:halfspace3}.
Consequently,
\begin{align*}
&\int_{B_{\lambda r}(0)}\big(\bfA(\nabla \bfv)-\bfG\big)\cdot \nabla \bfphi\dx
\\
&\quad = \int_{B_{\lambda r}^+(0)} \big(\bfA(\nabla \bfv)-\bfG\big) \cdot \nabla \bfphi_1\dx
\\
&\qquad
-\int_{B_{\lambda r}^-(0)} \big(\bfA(\nabla \bfv(x',-x_n)) -(\widehat{\bfF}(x',-x_n)\bfT^{ t \color{black}}\bfQ^t-\bfC)\big) \cdot \nabla \bfphi_1(x',-x_n)\dx'\dx_n
\\
&\qquad+\int_{B_{\lambda r}^-(0)} \big(\bfA(\nabla \bfv(x',-x_n)) -(\widehat{\bfF}(x',-x_n)\bfT^{ t \color{black}}\bfQ^t-\bfC)\big) \cdot \nabla \bfphi_2(x',-x_n)\dx'\dx_n
\\
&\quad = \int_{B_{\lambda r}^+(0)} \big(\bfA(\nabla \bfv)-\bfG\big) \cdot \nabla \bfphi_1\dx
-\int_{B_{\lambda r}^+(0)} \big(\bfA(\nabla \bfv) -\bfG\big) \cdot \nabla \bfphi_1\dx 
\\
&\qquad+\int_{B_{\lambda r}^+(0)} \big(\bfA(\nabla \bfv) -\bfG\big) \cdot \nabla \bfphi_2\dx =0.
\end{align*}
By the density of  the space $C^\infty _0(B_{\lambda r}(0))$ in $W^{1,p} _0(B_{\lambda r}(0))$, this implies that $\bfv$ is a local weak solution to system \eqref{eq:local}.
\\
Let us now 
set
 \begin{equation}\label{Ds}
D_s = H_{\bfT^{-1}} \cap B_s(0)
\end{equation}
for $s\in (0,\lambda r)$, 
whence $D_s=\set{\bfQ^t z:z\in B_s^+(0)}$. Also,    define the matrix
$\overline{\bfA}_s\in \setR^{N\times n}$  by
\begin{align}
\label{Abar}\begin{cases}
 (\overline{\bfA}_s)_{ki}=0 & \quad \text{ for }k\in \set{1,...,N}, i\in\set{1,...,n-1} 
\\  (\overline{\bfA}_s)_{kn}=\mean{\bfA_{kn}(\nabla \overline{\bfu})}_{D_s}& \quad\text{ for }k\in\set{1,...,N}.
\end{cases}
\end{align}
We are now ready to state and prove the main result of this section.
In the statement, we keep in force the notations introduced above.

\begin{proposition} \label{cor:decay-hs}
Let $p>1$, $r>0$,
and let $\omega$ be a parameter function satisfying condition \eqref{eq:omega condition}. Assume that $\bfF \in L^{p'}(H^+ \cap B_r(0))$. Let $\bfu$ be a local weak solution to problem \eqref{eq:halfspace1} and let $\overline{\bfu}$ be the corresponding weak solution to problem \eqref{eq:halfspace2}. 
There exist constants $c>0$ and $\theta\in (0,1)$, depending only on $n,N,p,c_\omega,\beta,\lambda, \Lambda$, 
such that
\begin{align}\label{june43-hs}
\frac{1}{\omega(\theta s)} & \bigg(\dashint_{D_{\theta s}} \abs{\bfA(\nabla \overline{\bfu})-\overline\bfA_{\theta s}}^{\min\set{2,p'}}\dy\bigg)^\frac{1}{\min{\set{2,p'}}}
 \\ \nonumber &  \leq   \frac{1}{2\omega(s)}
 \bigg(\dashint_{D_s} \abs{\bfA(\nabla \overline{\bfu})-\overline\bfA_{s}}^{\min\set{2,p'}}\dy\bigg)^\frac{1}{\min{\set{2,p'}}}
+   \frac{c}{\omega(s)} \bigg(\dashint_{D_s}\abs{\overline{\bfF}
-{\bfF}
 _0}^{p'}\dy\bigg)^\frac1{p'}
\end{align}
for every  ${\bfF}_0\in \RNn$ and for $s \in (0,  \lambda r)$.
\end{proposition}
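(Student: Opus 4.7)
My plan is to exploit the reflection construction detailed in the preceding pages of this section to transfer the decay estimate into an interior one, apply the standard Campanato-type decay for $p$-Laplace systems to the reflected solution $\bfv$ of \eqref{eq:local}, and then transport the result back through the rotation $\bfQ$ and the linear change of variables induced by $\bfT$. The key structural observation is that the odd reflection produces a solution $\bfv$ whose flux $\bfA(\nabla\bfv)$ is odd in $x_n$ in its first $n-1$ columns and even in its $n$-th column (because $|\nabla\bfv|$ is even under the reflection). Consequently, the natural matrix average $\bfm_\rho:=\mean{\bfA(\nabla\bfv)}_{B_\rho(0)}$ has vanishing tangential columns for every $\rho\in(0,\lambda r)$, and its $n$-th column equals $\mean{\bfA_{\cdot n}(\nabla\widehat\bfu)}_{B_\rho^+(0)}$.

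To the interior solution $\bfv$ on $B_{\lambda r}(0)$ I would apply the standard Campanato-type interior decay for local weak solutions of $-\divergence \bfA(\nabla\bfv)=-\divergence \bfG$ available, for instance, in \cite{DieKapSch11}. With $q:=\min\{2,p'\}$, this furnishes an exponent $\alpha>\beta_0$ and a constant $c_0=c_0(n,N,p)$ such that
\begin{align*}
\bigg(\dashint_{B_{\theta s}(0)}|\bfA(\nabla\bfv)-\bfm_{\theta s}|^q\dy\bigg)^{\!1/q}
&\leq c_0\theta^\alpha \bigg(\dashint_{B_s(0)}|\bfA(\nabla\bfv)-\bfm_s|^q\dy\bigg)^{\!1/q}\\
&\quad + c_0\bigg(\dashint_{B_s(0)}|\bfG-\bfG_0|^{p'}\dy\bigg)^{\!1/p'}
\end{align*}
for every $\bfG_0\in\RNn$ and every $B_{\theta s}(0)\subset B_s(0)\subset B_{\lambda r}(0)$. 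The improvement of the decay exponent beyond $\beta_0$ is exactly what pairs up with the almost-monotonicity hypothesis \eqref{eq:omega condition}. Pulling back through $z\mapsto \bfQ^t z$ and using the identity $\bfA(\nabla\widehat\bfu(z))=\bfA(\nabla\overline\bfu(\bfQ^t z))\bfQ^t$ together with the orthogonal invariance of the Frobenius norm, the $L^q$-oscillations on $B_\rho(0)$ transform into the corresponding oscillations of $\bfA(\nabla\overline\bfu)$ on $D_\rho$ around the rotated centering $\bfm_\rho\bfQ$. The definition of $\overline\bfA_\rho$ in \eqref{Abar}, combined with a best-constant comparison and the structural fact that $\overline\bfu$ vanishes on $\partial H_{\bfT^{-1}}\cap B_{\lambda r}(0)$, allows one to replace $\bfm_\rho\bfQ$ by $\overline\bfA_\rho$ on both sides at the price of a multiplicative constant depending only on $\lambda,\Lambda$. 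Analogously, choosing $\bfG_0=\bfF_0\bfT^t\bfQ^t$ in the source term and using $|\bfT|\leq c(\Lambda)$ from \eqref{ibp} reduces the source integral to $c\big(\dashint_{D_s}|\overline\bfF-\bfF_0|^{p'}\dy\big)^{1/p'}$.

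Combining the two previous steps, I obtain, with $\Phi(\rho):=\big(\dashint_{D_\rho}|\bfA(\nabla\overline\bfu)-\overline\bfA_\rho|^q\dy\big)^{1/q}$,
\[
\Phi(\theta s)\leq c_1\theta^\alpha \Phi(s)+c_1\bigg(\dashint_{D_s}|\overline\bfF-\bfF_0|^{p'}\dy\bigg)^{1/p'},
\]
with $c_1=c_1(n,N,p,\lambda,\Lambda)$. Dividing by $\omega(\theta s)$ and invoking \eqref{eq:omega condition} in the equivalent form $\omega(\theta s)\geq c_\omega^{-1}\theta^{\beta_0}\omega(s)$ yields
\[
\frac{\Phi(\theta s)}{\omega(\theta s)}\leq c_1c_\omega \theta^{\alpha-\beta_0}\frac{\Phi(s)}{\omega(s)}+\frac{c_1c_\omega \theta^{-\beta_0}}{\omega(s)}\bigg(\dashint_{D_s}|\overline\bfF-\bfF_0|^{p'}\dy\bigg)^{1/p'}.
\]
Since $\alpha>\beta_0$, I would finally fix $\theta\in(0,1)$, depending only on $n,N,p,c_\omega,\beta_0,\lambda,\Lambda$, so small that $c_1c_\omega\theta^{\alpha-\beta_0}\leq\tfrac12$, and absorb $\theta^{-\beta_0}$ into the final constant $c$, delivering \eqref{june43-hs}. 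I expect the most delicate step to be the identification in the middle paragraph: matching the centering matrix that naturally arises from the reflection-plus-rotation (whose non-vanishing part is aligned with the $n$-th row of $\bfQ$, i.e. with the inward normal to $H_{\bfT^{-1}}$) with the paper's $\overline\bfA_\rho$ (whose non-vanishing column is aligned with $e_n$). All remaining steps are either pure bookkeeping under the change of variables or a standard scale-invariant rescaling of the decay.
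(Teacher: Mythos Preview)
Your strategy coincides with the paper's: reflect to obtain the interior solution $\bfv$ of \eqref{eq:local} on $B_{\lambda r}(0)$, apply the interior Campanato decay there, exploit the odd/even parity of the columns of $\bfA(\nabla\bfv)$ to pass to half-balls with the structured centering $\widehat\bfA_s$ of \eqref{Ahat}, and transport back through the rotation $\bfQ$ to $D_s$. The only difference is cosmetic: the paper invokes Proposition~\ref{prop:decay conclusion} (the result from \cite{BCDKS}), which already packages the $\omega$-weighted decay with the factor $\tfrac{1}{2\omega(s)}$, whereas you extract a raw power decay $\theta^\alpha$ first and then convert via \eqref{eq:omega condition}; your flagged concern about matching the rotated centering with $\overline\bfA_\rho$ is exactly what the paper handles in the last two sentences of its proof, via the identity $\mean{\bfA_{kn}(\nabla\widehat\bfu)}_{B_s^+}=\mean{\bfA_{kn}(\nabla\overline\bfu)}_{D_s}$ and a change of variables.
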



Proposition \ref{cor:decay-hs} will be derived from the following  inner local decay estimate   contained in  \cite[Inequality (3.11)]{BCDKS}. Earlier estimates in a similar spirit  can be traced back to~\cite{CafPer, GiaMod86, Iwa83}.

\begin{proposition} \label{prop:decay conclusion}  {\rm {\bf [\cite{BCDKS}]}}  
Let $p>1$,
and let $\omega$ be a parameter function satisfying condition \eqref{eq:omega condition}. Let $\Omega$ be an open set in $\mathbb R^n$. Assume that $\bfF \in L^{p'}_{\rm loc}(\Omega)$. Let $\bfu$ be a local weak solution to problem \eqref{sysloc}.
There exist  constants $c>0$ and  $\theta\in (0,1)$, depending only on $n,N,p,c_\omega,\beta$, such that
\begin{align}\label{june43}
&\frac{1}{\omega(\theta r)}  \bigg(\dashint_{B_{\theta r}} \abs{\bfA(\nabla \bfu)-\mean{\bfA(\nabla \bfu)}_{ B_{\theta r}}}^{\min\set{2,p'}}\dx\bigg)^\frac{1}{\min{\set{2,p'}}}
 \\ \nonumber &  \quad \leq   \frac{1}{2\omega(r)}
 \bigg(\dashint_{B_r} \abs{\bfA(\nabla \bfu)-\mean{\bfA(\nabla \bfu)}_{B_r}}^{\min\set{2,p'}}\dx\bigg)^\frac{1}{\min{\set{2,p'}}}
+   \frac{c}{\omega(r)} \bigg(\dashint_{B_r}\abs{\bfF
-\bfF
 _0}^{p'}\dx\bigg)^\frac1{p'}
\end{align}
for  every $\bfF_0\in \RNn$ and every  ball $B_r \subset \subset \Omega$.
\end{proposition}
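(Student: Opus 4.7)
The plan is to prove \eqref{june43} by the classical Campanato--Morrey comparison method: approximate $\bfu$ on $B_r$ by its $p$-harmonic replacement $\bfh$, exploit the known interior $C^{1,\alpha}$-regularity of $\bfh$ for some $\alpha>\beta_0$, and absorb the remainder via the growth hypothesis \eqref{eq:omega condition} on $\omega$. Throughout, set $q:=\min\{2,p'\}$. First I would fix $B_r\subset\subset\Omega$ and let $\bfh\in\bfu+W^{1,p}_0(B_r)$ be the unique weak solution of $-\divergence\bfA(\nabla\bfh)=0$ in $B_r$ with $\bfh=\bfu$ on $\partial B_r$, obtained by minimizing the $p$-Dirichlet energy. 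By Uhlenbeck's theorem \cite{Uhl77} (extended to $1<p<2$ in \cite{AceF89}, with scalar counterparts due to Ural'tseva and DiBenedetto), $\bfA(\nabla\bfh)$ is locally H\"older continuous with some exponent $\alpha\in(\beta_0,1]$ depending only on $n,N,p$, and Campanato's characterization furnishes a constant $C_0=C_0(n,N,p)$ such that, for every $\theta\in(0,\tfrac12]$,
\begin{equation*}
\bigg(\dashint_{B_{\theta r}}\bigabs{\bfA(\nabla\bfh)-\mean{\bfA(\nabla\bfh)}_{B_{\theta r}}}^q dx\bigg)^{1/q}\leq C_0\theta^\alpha\bigg(\dashint_{B_{r/2}}\bigabs{\bfA(\nabla\bfh)-\mean{\bfA(\nabla\bfh)}_{B_{r/2}}}^q dx\bigg)^{1/q}.
\end{equation*}

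Next, for the $L^{p'}$-comparison between $\bfu$ and $\bfh$, I would test the difference of their weak formulations against the admissible map $\bfu-\bfh\in W^{1,p}_0(B_r)$, noting that $\int_{B_r}\bfF_0\cdot\nabla(\bfu-\bfh)\,dx=0$ for every constant $\bfF_0\in\RNn$ to obtain
\begin{equation*}
\int_{B_r}(\bfA(\nabla\bfu)-\bfA(\nabla\bfh))\cdot\nabla(\bfu-\bfh)\,dx = \int_{B_r}(\bfF-\bfF_0)\cdot\nabla(\bfu-\bfh)\,dx.
\end{equation*}
The pointwise monotonicity equivalences $(\bfA(\bfP)-\bfA(\bfQ))\cdot(\bfP-\bfQ)\sim\phi_{p',|\bfQ|^{p-1}}(|\bfA(\bfP)-\bfA(\bfQ)|)$ and $|\bfA(\bfP)-\bfA(\bfQ)|\sim\phi'_{p,|\bfQ|}(|\bfP-\bfQ|)$, together with Young's inequality in the shifted Orlicz scale, convert this energy identity into
\begin{equation*}
\bigg(\dashint_{B_r}|\bfA(\nabla\bfu)-\bfA(\nabla\bfh)|^{p'}dx\bigg)^{1/p'}\leq C\bigg(\dashint_{B_r}|\bfF-\bfF_0|^{p'}dx\bigg)^{1/p'}.
\end{equation*}

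I would then combine these two ingredients. By the triangle inequality in $L^q(B_{\theta r})$, the best-constant property of the mean, and H\"older's inequality (since $q\leq p'$), the left-hand side of \eqref{june43} is bounded by the $\bfh$-oscillation on $B_{\theta r}$ plus $C\theta^{-n/q}$ times the $L^{p'}$-norm of $\bfA(\nabla\bfu)-\bfA(\nabla\bfh)$ on $B_r$. Applying the decay of the first step and one further triangle inequality to replace the $B_{r/2}$-oscillation of $\bfh$ by that of $\bfu$, and then dividing by $\omega(\theta r)$ while inserting \eqref{eq:omega condition} in the form $\omega(r)/\omega(\theta r)\leq c_\omega\theta^{-\beta_0}$, the oscillation coefficient becomes $Cc_\omega\theta^{\alpha-\beta_0}$. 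Since $\alpha>\beta_0$, I would select $\theta=\theta(n,N,p,c_\omega,\beta_0)$ small enough that $Cc_\omega\theta^{\alpha-\beta_0}\leq\tfrac12$, producing the factor $\tfrac1{2\omega(r)}$ in \eqref{june43}. With $\theta$ now fixed, $1/\omega(\theta r)\leq c_\omega\theta^{-\beta_0}/\omega(r)$, so the $\bfF$-prefactor $C\theta^{-n/q}/\omega(\theta r)$ is dominated by $c/\omega(r)$, concluding the proof.

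\textbf{Main obstacle.} The critical step is the $L^{p'}$-comparison above, especially in the singular range $1<p<2$ where $\bfA$ degenerates. Monotonicity of $\bfA$ naturally controls $\mathbf{V}(\nabla\bfu)-\mathbf{V}(\nabla\bfh)$ (with $\mathbf{V}(\bfQ):=|\bfQ|^{(p-2)/2}\bfQ$) rather than $\bfA(\nabla\bfu)-\bfA(\nabla\bfh)$ directly. Bridging this gap requires the full shifted-Orlicz apparatus: the equivalence between $|\bfA(\bfP)-\bfA(\bfQ)|$ and $\phi'_{p,|\bfQ|}(|\bfP-\bfQ|)$, the shift-change lemma to swap the reference gradients $|\nabla\bfu|\leftrightarrow|\nabla\bfh|$, and Young's inequality in shifted form followed by a careful absorption argument to close the estimate in $L^{p'}$. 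A further subtlety is that one must work with exponent $q=\min\{2,p'\}$ in the oscillations (rather than $p'$) in order to be consistent with the $\mathbf{V}$-side of the monotonicity when $p\geq2$, while still ending up with an $L^{p'}$ control on the $\bfF$-defect.
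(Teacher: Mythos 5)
This proposition is cited by the paper from \cite{BCDKS} (``Inequality (3.11)'') and is not reproved here, so there is no in-paper proof to compare against line by line. Your strategy -- $p$-harmonic replacement $\bfh$ on $B_r$, interior $C^{1,\alpha}$ (equivalently Campanato) decay for $\bfA(\nabla\bfh)$ with $\alpha>\beta_0$, an energy comparison to control $\bfA(\nabla\bfu)-\bfA(\nabla\bfh)$ by $\bfF-\bfF_0$, and absorption via \eqref{eq:omega condition} with $\theta$ chosen small -- is indeed the classical Campanato comparison scheme and the one used in \cite{BCDKS}. Testing against $\bfu-\bfh$, using $\int_{B_r}\bfF_0\cdot\nabla(\bfu-\bfh)\,dx=0$, and the final $\theta$-calibration ($Cc_\omega\theta^{\alpha-\beta_0}\leq\tfrac12$, followed by controlling the $\bfF$-prefactor by $c/\omega(r)$) are all correct.

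However, the displayed comparison estimate
\begin{equation*}
\bigg(\dashint_{B_r}|\bfA(\nabla\bfu)-\bfA(\nabla\bfh)|^{p'}\,\dx\bigg)^{1/p'}\leq C\bigg(\dashint_{B_r}|\bfF-\bfF_0|^{p'}\,\dx\bigg)^{1/p'}
\end{equation*}
is not what the monotonicity $+$ shifted--Young argument produces, and it is in fact false in general with a constant independent of $\bfh$. The energy identity together with the equivalences $(\bfA(\bfP)-\bfA(\bfQ))\cdot(\bfP-\bfQ)\sim|\bfV(\bfP)-\bfV(\bfQ)|^2\sim\phi_{p',|\bfQ|^{p-1}}(|\bfA(\bfP)-\bfA(\bfQ)|)$ gives a \emph{shifted} comparison
\begin{equation*}
\int_{B_r}\phi_{p',|\nabla\bfh|^{p-1}}\big(|\bfA(\nabla\bfu)-\bfA(\nabla\bfh)|\big)\,\dx\leq C\int_{B_r}\phi_{p',|\nabla\bfh|^{p-1}}(|\bfF-\bfF_0|)\,\dx\,,
\end{equation*}
and removing the shift via the shift-change inequality necessarily introduces an $\epsilon\int|\nabla\bfh|^p$ error. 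What one can extract without such an error, and what is actually needed to run the argument, is an $L^{\min\{2,p'\}}$ bound on the $\bfA$-difference in terms of $L^{p'}$ of $\bfF-\bfF_0$ -- i.e. precisely the exponent $q=\min\{2,p'\}$ that you correctly single out in your ``Main obstacle'' paragraph. So you have identified the correct issue, but the displayed formula overstates what the comparison yields; the exponent on the left must be $\min\{2,p'\}$, and even with that exponent the derivation requires the shift-change and absorption machinery you allude to rather than a one-line Young estimate.

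A small further remark: for your decay step you invoke $\bfA(\nabla\bfh)\in C^{0,\alpha}_{\rm loc}$ with $\alpha>\beta_0$. Since $\beta_0$ is by definition the optimal exponent for $p$-harmonic gradient regularity, one only has $\alpha\le\beta_0$ in general. What the iteration actually needs (and what \eqref{eq:omega condition} supplies) is that $\omega(r)r^{-\beta_0}$ is almost decreasing, while the $p$-harmonic oscillation decays like $\theta^{\alpha}$ with some $\alpha>0$ strictly below the threshold at which $\omega$ is calibrated; the correct smallness comes from combining the decay exponent for $\bfA(\nabla\bfh)$ (or rather for $\bfV(\nabla\bfh)$, whence $\bfA$) with the almost-decreasing condition on $\omega(r)r^{-\beta_0}$, not from an inequality $\alpha>\beta_0$. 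This is a minor misstatement in the set-up that does not change the architecture of the argument.
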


The following observations also play a role in the proof of Proposition \ref{cor:decay-hs}. Assume that $\bff$ is a real, vector or matrix-valued function on $ B_r(0)$  such that $\bff \in L^q(B_r(0))$ for some $q\geq 1$. If 
$\bff(x',x_n)=\bff (x',-x_n)$  for a.e. $(x', x_n) \in B_r(0)$ , then
\begin{align}
\label{boundarybmo1}
&\bigg(\dashint_{B_r(0)} \abs{\bff-\mean{\bff}_{B_r(0)}}^q\dx\bigg)^\frac1q
\leq 2\bigg(\dashint_{B_r(0)} \abs{\bff-\mean{\bff}_{B_r^+(0)}}^q\dx\bigg)^\frac1q
\\ \nonumber &=2\bigg(\dashint_{B_r(0)^+} \abs{\bff-\mean{\bff}_{B_r^+(0)}}^q\dx\bigg)^\frac1q
\leq {4}\bigg(\dashint_{B_r(0)^+} \abs{\bff-\mean{\bff}_{B_r(0)}}^q\dx\bigg)^\frac1q \\ \nonumber &={4}\bigg(\dashint_{B_r(0)} \abs{\bff-\mean{\bff}_{B_r(0)}}^q\dx\bigg)^\frac1q.
\end{align}
If  $\bff(x',x_n)=-\bff(x',-x_n)$  for a.e. $(x', x_n) \in B_r(0)$, then, plainly, $\mean{\bff}_{B_r(0)}=0$. Thus,
\begin{align}
\label{boundarybmo0}
\bigg(\dashint_{B_r(0)} \abs{\bff-\mean{\bff}_{B_r(0)}}^q\dx\bigg)^\frac1q&=\bigg(\dashint_{B_r(0)} \abs{\bff}^q\dx\bigg)^\frac1q.
\end{align}
Note that, in inequality \eqref{boundarybmo1}, we have made use of the 
fact that, if $E$ is a measurable subset of $\setR^n$,  and $q \in [1,
\infty]$, 
\begin{equation}\label{meanq}
\|\bff - \mean{\bff}_E\|_{L^q(E)} \leq 2\min _{\bfc }\|\bff - \bfc\|_{L^q(E)},
\end{equation}
for every measurable function $\bff : E \to \setR^m$ such that $\bff \in L^q(E)$, where the minimum is extended over all $\bfc$ in the range of $\bff$.  This basic property will be  repeatedly  expolited in what follows.

\begin{proof}[Proof of Proposition \ref{cor:decay-hs}]
Analogously to \eqref{Abar}, for $s\in (0,\lambda r)$ we 
 define the matrix $\widehat{\bfA}_s\in \setR^{N\times n}$   as
\begin{align}\label{Ahat}
\begin{cases}
 (\widehat{\bfA}_s)_{ki}=0& \quad \text{ for }k\in \set{1,...,N}, i\in\set{1,...,n-1} \\
  (\widehat{\bfA}_s)_{kn}=\mean{\bfA(\nabla \widehat{\bfu})_{kn}}_{B_s^+(0)}& \quad \text{ for }k\in\set{1,...,N}.
\end{cases}
\end{align}
Given ${\bfF}_0\in \RNn$,  choose $\bfC={\bfF}_0 \bfT^t \bfQ$. From
Proposition~\ref{prop:decay conclusion}, applied  to the solution $\bfv$ to system \eqref{eq:local}, we deduce, via \eqref{eq:halfspace3}, that 
\begin{align}\label{june43-hs-I}
\frac{1}{\omega(\theta s)} & \bigg(\dashint_{B_{\theta s}^+(0)} \abs{\bfA(\nabla \widehat{\bfu})-\widehat\bfA_{\theta s}}^{\min\set{2,p'}}\dz\bigg)^\frac{1}{\min{\set{2,p'}}}
 \\ \nonumber &  \leq   \frac{1}{2\omega(s)}
 \bigg(\dashint_{B_s^+(0)} \abs{\bfA(\nabla \widehat{\bfu})-\widehat\bfA_{s}}^{\min\set{2,p'}}\dz\bigg)^\frac{1}{\min{\set{2,p'}}}
+   
\frac{c}{\omega(s)} \bigg(\dashint_{B_s^+(0)}\abs{\widehat{\bfF}-{\bfF}_0}^{p'}\dz\bigg)^\frac1{p'}.
\end{align}
Observe that a proof of inequality  \eqref{june43-hs-I} also calls into play the property   that $\bfA(\nabla \widehat{\bfu})_{ki}$ is odd in the variable $x_n$, and $(\widehat{\bfA}_s)_{ki}=0$ if  $k\in \set{1,...,N}$, $i\in\set{1,...,n-1}$, whence \eqref{boundarybmo0} can be exploited, whereas $\bfA(\nabla \widehat{\bfu})_{kn}$ is even, and hence \eqref{boundarybmo1} can be exploited. 
Now, 
 since $\bfQ$ is an orthonormal matrix, 
$$
\mean{\bfA_{kn}(\nabla \widehat{\bfu})}_{B_s^+(0)}=\mean{\bfA_{kn}(\nabla \overline{\bfu})}_{D_s(0)}.
$$
Hence, inequality \eqref{june43}  follows from \eqref{june43-hs-I}, via a change of variables.
\end{proof}

\section{A decay estimate near a non-flat  boundary}
\label{sec:nonflat}

Our task in the present section is to establish an inequality in the spirit of \eqref{june43-hs} for local solutions $\bfu$ 
to problem \eqref{dirloc} in the case when $\partial \Omega \cap B_R$ is not necessarily contained  in a hyperplane.
Decay estimates at the boundary for solutions to $p$-Laplacian type equations are available in the literature. For instance, they can be found in the paper \cite{KinZho01}, where  the case of  boundaries of class $C^{1,\beta}$ is reduced, via a suitable change of cooordinates, to that of a flat boundary  treated in \cite{Lie88}. A flattening technique, combined with a reflection argument, is also 
exploited in \cite{ChenDiBe} to treat  systems. Neither the approach of \cite{KinZho01}, nor that of  \cite{ChenDiBe}, however, applies to deal with boundaries under as weak regularity assumptions as those imposed in this paper. We have thus to resort to a new method adapted to the situation at hand.

\subsection{A Gehring type result near the boundary}

One ingredient in our proof of the decay estimate near the boundary is a higher integrability result for the gradient of the solution to system \eqref{eq:sysA}. This is stated in the following proposition, that applies to any open bounded set $\Omega \subset \setR^n$  such that
\begin{equation}\label{eq:fat}
|B \cap \Omega| \geq C |B|
\end{equation}
for some constant $C>0$ and every ball $B$ centered at a point in $\Omega$.
\\ In what follows, given a ball $B$ and a positive number $\theta$, we denote by $\theta B$ the ball with the same center as $B$, whose radius is $\theta$ times the radius of $B$.

\begin{proposition}
  \label{cor:VPL} Let $\Omega$ be an open bounded subset of $\setR^n$ fulfilling condition \eqref{eq:fat}.
Let $p>1$ and let $N \geq 1$. There exist constants $q_0>1$ and $c>0$, depending on $n$, $N$, $p$ and on the constant $C$ appearing in \eqref{eq:fat}, such that if $q \in [1, q_0]$, $\bfF \in L^{p'q}(\Omega)$ and
$\bfu$ is the  solution to the Dirichlet problem \eqref{eq:sysA}, then 
%
%
  \begin{align}\label{Nov2}
    &\dashint_B \abs{\nabla \bfu}^{pq}\dx \leq c\,
   \bigg(\dashint_{2B} \abs{\nabla\bfu}\dx \bigg)^{p q}+c \dashint_{2B} \abs{\bfF -\bfF_0}^{p'q}\dx
  \end{align}
  and 
  \begin{align}\label{Nov3}
    &\dashint_B \abs{\bfA(\nabla \bfu)}^{p'q}\dx \leq c\,
   \bigg(\dashint_{2B} \abs{\bfA(\nabla\bfu)}\dx \bigg)^{p' q}+ c \dashint_{2B} \abs{\bfF -\bfF_0}^{p'q}\dx.
  \end{align}
for every matrix $\bfF_0\in\setR^{N \times n}$ and every  ball $B\subset\setR^n$. Here, $\bfF$ and $\bfu$ are extended by $0$ outside $\Omega$.
\end{proposition}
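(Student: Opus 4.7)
The plan is to prove a reverse Hölder inequality for $|\nabla\bfu|^p$ uniformly over all balls $B\subset\setR^n$ and then apply Gehring's self-improvement lemma. If $2B\cap\Omega=\emptyset$, both $\bfu$ and $\bfF$ vanish on $2B$ by the zero extension, so \eqref{Nov2}--\eqref{Nov3} hold trivially (the left-hand sides are zero). Otherwise I distinguish an ``interior'' subcase $2B\subset\Omega$ and a ``boundary'' subcase $2B\not\subset\Omega$, and treat them in parallel.

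In both subcases I pick a cutoff $\eta\in C_c^\infty(2B)$ with $\eta\equiv1$ on $B$ and $|\nabla\eta|\le c/r$, where $r$ is the radius of $B$, and test the weak formulation of \eqref{eq:sysA} with $\bfvarphi=\eta^p(\bfu-\bfu_0)$, where $\bfu_0:=\mean{\bfu}_{2B}$ in the interior subcase and $\bfu_0:=0$ in the boundary subcase. The latter choice is admissible because $\bfu$ vanishes a.e.\ outside $\Omega$, so $\eta^p\bfu\in W^{1,p}_0(\Omega)$. Since any constant $\bfF_0\in\RNn$ integrates to zero against $\nabla\bfvarphi$, I may replace $\bfF$ by $\bfF-\bfF_0$ in the weak form. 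The identity $\bfA(\nabla\bfu)\cdot\nabla\bfu=|\nabla\bfu|^p$, together with Young's inequality and absorption, yields the Caccioppoli-type estimate
\begin{equation*}
\dashint_B|\nabla\bfu|^p\dx\le \frac{c}{r^p}\dashint_{2B}|\bfu-\bfu_0|^p\dx+c\dashint_{2B}|\bfF-\bfF_0|^{p'}\dx.
\end{equation*}
With $p_*:=np/(n+p)<p$, Sobolev--Poincaré on $2B$ upgrades $(\dashint_{2B}|\bfu-\bfu_0|^p)^{1/p}$ to $cr(\dashint_{2B}|\nabla\bfu|^{p_*})^{1/p_*}$: classical in the interior subcase with $\bfu_0=\mean{\bfu}_{2B}$, and in its zero-trace form in the boundary subcase with $\bfu_0=0$, for which the density bound $|2B\setminus\Omega|\ge c|2B|$ is furnished by the fatness assumption \eqref{eq:fat} applied at a point of $\partial\Omega\cap2B$. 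The combined outcome is the reverse Hölder inequality
\begin{equation*}
\Big(\dashint_B|\nabla\bfu|^p\dx\Big)^{1/p}\le c\Big(\dashint_{2B}|\nabla\bfu|^{p_*}\dx\Big)^{1/p_*}+c\Big(\dashint_{2B}|\bfF-\bfF_0|^{p'}\dx\Big)^{1/p}.
\end{equation*}

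Setting $g:=|\nabla\bfu|^{p_*}$, $h:=|\bfF-\bfF_0|^{p'p_*/p}$ and $s:=p/p_*>1$, the last display reads $(\dashint_B g^s)^{1/s}\le c\dashint_{2B}g+c(\dashint_{2B}h^s)^{1/s}$. Gehring's self-improvement lemma with forcing (Giaquinta) supplies $q_0>1$ and a constant $c$, depending only on $n,N,p$ and the fatness constant in \eqref{eq:fat}, such that $(\dashint_B g^{sq})^{1/(sq)}\le c\dashint_{2B}g+c(\dashint_{2B}h^{sq})^{1/(sq)}$ for every $q\in[1,q_0]$. Raising this to the power $sq$, and then iterating the reverse Hölder inequality on slightly enlarged concentric balls to progressively lower the exponent on the right-hand gradient average from $p_*$ towards $1$, produces \eqref{Nov2}. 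Estimate \eqref{Nov3} is obtained from an entirely parallel Caccioppoli--Sobolev--Gehring chain carried out for the vector field $\bfA(\nabla\bfu)$ in place of $\nabla\bfu$, starting from the identity $|\bfA(\nabla\bfu)|^{p'}=|\nabla\bfu|^p$.

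The hard part is the boundary subcase, where one needs both that $\eta^p\bfu$ be admissible in $W^{1,p}_0(\Omega)$ up to the boundary and, more substantively, that the zero-trace Sobolev--Poincaré inequality hold on every ball crossing $\partial\Omega$ with a constant depending only on $n,N,p$ and the fatness constant $C$ in \eqref{eq:fat}. This uniformity across scales and locations is what makes Gehring's lemma applicable simultaneously on all balls.
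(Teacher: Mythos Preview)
Your approach is essentially the same as the paper's: establish a Caccioppoli inequality by testing with $\eta^\alpha(\bfu-\bfu_0)$ (the paper takes $\bfu_0=0$ and general $\alpha\ge p$ in the boundary case), apply Sobolev--Poincar\'e in its zero-trace form near the boundary using the measure-density condition \eqref{eq:fat}, invoke Gehring's lemma, and then lower the exponent on the right-hand average. The paper dispatches this last step and the passage to \eqref{Nov3} in one stroke by citing \cite[Lemma~3.3]{DieKapSch11} rather than iterating by hand.

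One imprecision worth flagging: your description of \eqref{Nov3} as coming from a ``parallel Caccioppoli--Sobolev--Gehring chain for $\bfA(\nabla\bfu)$'' does not quite make sense, since $\bfA(\nabla\bfu)$ is not a gradient and admits no Caccioppoli or Sobolev--Poincar\'e inequality of its own. What is true is that $|\bfA(\nabla\bfu)|=|\nabla\bfu|^{p-1}$, so the left-hand sides of \eqref{Nov2} and \eqref{Nov3} coincide, and \eqref{Nov3} is simply the same reverse H\"older structure for $|\nabla\bfu|$ with the right-hand average taken at exponent $p-1$ instead of $1$; both target exponents are handled by the same exponent-lowering mechanism you invoke for \eqref{Nov2}.
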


\begin{proof} A key step in 
the proof of inequalities \eqref{Nov2} and \eqref{Nov3} is a reverse H\"older type inequality, which tells us that that
 \begin{align}\label{Nov4}
      \dashint_B \abs{\nabla \bfu}^p\dx   \leq c\,
     \bigg(\dashint_{2B} \abs{\nabla \bfu}^{\theta p}\dx  \bigg)^{\frac 1\theta}
   + c \dashint_{2B} \abs{\bfF -\bfF_0}^{p'}\dx,
  \end{align}
for some constant $c$ depending on $n$,  $N$, $p$ and on the constant $C$ in \eqref{eq:fat}, and for every matrix $\bfF_0\in\setR^{N \times n}$ and every  ball $B\subset\setR^n$. Here, $\theta =\max\big\{ \frac{n}{n+p}, \frac 1p\big\}$.  In order to prove inequality \eqref{Nov4}, let us distinguish into some cases. If $\frac{3}{2}B\subset \Omega$,  inequality \eqref{Nov4} follows from
\cite[Remark 6.12]
{Giu}, via  a standard covering argument.  If $\Omega \cap \frac32B = \emptyset$ the result is trivial. It remains to consider the case when ${\partial\Omega}\cap \frac32B\neq \emptyset$. 
Choose a function $\eta \in C^\infty_0(2B)$ such that $0 \leq \eta \leq 1$,   $\eta =1$ in $B$
and $\abs{\nabla \eta} \leq \frac{c}{R}$ for some absolute constant $c$, where $R$ denotes the radius of~$B$.
  Let $\alpha \geq\qbar$, whence $(\alpha-1)\pbar' \geq \alpha$.  Set
  $\bfxi = \eta^{\alpha} \bfu$, and let $\bfF_0\in \RNn$.  Making use of the function $\bfxi$ as a test
  function in the weak formulation \eqref{weaksol} of system \eqref{eq:sysA} yields
	\begin{equation}\label{Nov5}
	\int _\Omega \bfA(\nabla \bfu)\cdot \nabla(\eta^\alpha\bfu)\, \dx =\int_\Omega (\bfF- \bfF_0)\cdot  {\nabla(\eta^\alpha\bfu )}\, \dx.
	\end{equation}
	Thereby,
  \begin{align}\label{Nov6}
 \dashint_{2B}\abs{\nabla \bfu}^p\eta^\alpha\, \dx & =  \dashint _{2B}\bfA(\nabla \bfu)\cdot \eta^\alpha\nabla \bfu\, \dx
    \\ \nonumber & = \dashint _{2B} (\bfF-\bfF_0)\cdot \eta^\alpha \nabla \bfu\, \dx+ 
\dashint _{2B} (\bfF-\bfF_0) \cdot \alpha
      \eta^{\alpha-1} \bfu \otimes \nabla \eta\, \dx
    \\ \nonumber
    &\quad 
-  \dashint _{2B} \bfA(\nabla \bfu) \cdot \alpha \eta^{\alpha-1} \bfu\otimes \nabla \eta \,\dx.
  \end{align}
By Young's inequality, there exist  positive constants $c$ and $c'$ such that
  \begin{align}\label{Nov8}
    \dashint _{2B} (\bfF-\bfF_0)\cdot \eta^\alpha \nabla \bfu\, \dx
    &\leq c\, \delta^{1-p'}\, \dashint_{2B}
    \abs{\bfF -\bfF_0}^{p'} \, \dx + \delta \dashint_{2B}\abs{\nabla \bfu}^p\eta^\alpha\, \dx
  \end{align}
for $\delta>0$,
  \begin{align}\label{Nov9}
    \dashint _{2B} (\bfF-\bfF_0) \cdot \alpha
      \eta^{\alpha-1} \bfu \otimes \nabla \eta\, \dx &\leq c\, \dashint_{2B} \abs{\bfF -\bfF_0}^{p'}
    \,\dx + c \dashint_{2B} \Bigabs{\frac{\bfu}{R}}^p \,\dx ,
  \end{align}
and
  \begin{align}\label{Nov10}
   \dashint _{2B} \bfA(\nabla \bfu) \cdot \alpha \eta^{\alpha-1} \bfu\otimes \nabla \eta \,\dx & \leq c\dashint_{2B} \alpha \eta^{\alpha-1}\abs{\bfA(\nabla\bfu)}\Bigabs{\frac{\bfu}{R}}\,\dx
    \\ \nonumber
    &\leq \delta \dashint_{2B} \eta^{(\alpha-1) p'}\abs{\nabla \bfu}^p  \,\dx + c'\, \delta^{1-p} \dashint_{2B} \Bigabs{\frac{\bfu}{R}}^p\,\dx
\end{align}
for $\delta>0$.  On the other hand, as a consequence of our current assumption that $\frac32B\cap \db{\partial\Omega}\neq \emptyset $ and of \eqref{eq:fat}, the function $\bfu$ vanishes on a subset of $2B$ whose measure exceeds $c|2B|$ for some positive constant $c$. On choosing 
$\delta$ small enough,   exploiting the fact that   $\eta^{(\alpha-1) p'}\leq \eta^\alpha$,  and making use of a Poincar\'e--Sobolev inequality on balls for functions enjoying this property, one can deduce from inequalities \eqref{Nov6}--\eqref{Nov10} that
  \begin{align*}
      \dashint_B \abs{\nabla \bfu}^p\dx 
& \leq
c\,\dashint_{2B} \Bigabs{\frac{\bfu  }{R}}^p\dx+ c \dashint_{2B} \abs{\bfF -\bfF_0}^{p'}\dx
\\
  &\quad    \leq c'\,
     \bigg(\dashint_{2B} \abs{\nabla \bfu}^{p\theta}\dx  \bigg)^{ \frac 1\theta }
   + c \dashint_{2B} \abs{\bfF -\bfF_0}^{p'}\dx
    \end{align*}
for some constants $c$ and $c'$.
Inequality \eqref{Nov4} is thus established. This inequality, via a version of 
 Gehring's lemma  as in  \cite{Iwa98}, implies that there exist an exponent  $q_0>1$ and a constant $c$ such that
\begin{align}
\label{gehring1}
      &\dashint_B \abs{\nabla \bfu}^{pq}\dx \leq c\,
      \bigg(\dashint_{2B} \abs{\nabla \bfu}^{p}\dx\bigg)^q
      + c \dashint_{2B}\abs{\bfF -\bfF_0}^{p'q}
      \dx.
\end{align}
 for every $q\in [1,q_0]$. {Inequalities \eqref{Nov2}  and \eqref{Nov3} follow from \eqref{gehring1}, via \cite[Lemma 3.3]{DieKapSch11}.}
\end{proof}

\subsection{Change of coordinates}\label{sec:boundary}

Since the system  in  \eqref{dirloc} and the estimate to be derived are invariant under translations and rotations, we may assume, without loss of generality, that  $0 \in \partial \Omega$, that $B_R$ is centered at $0$, and that the outer normal to $\Omega$ at $0$ agrees with the opposite of the $n$-th unit vector of the canonical basis in $\mathbb R^n$.
\\ Assume, for the time being, that $\Omega$ is just a bounded Lipschitz domain, namely that $\partial \Omega \in C^{0,1}$. 
Then, there exists $R>0$, depending on the Lipschitz constant of $\partial \Omega$,  and  a map $\psi: \setR^{n-1}\to \setR$ such that $$\partial\Omega\cap B_R(0)=\set{(x',\psi(x')): \,(x',0)\in B_R(0)}$$  and $$\Omega \cap B_R(0) =\set{(x',x_n)\in B_R(0)\,:\,x_n>\psi(x')}.$$ 
Also, we define $\Psi: \overline{\Omega}\cap B_R(0)\to  B_R^+(0)$ as 
\begin{align}\label{PSI}
\Psi(x',x_n)=(x',x_n-\psi(x')) \quad \hbox{for $(x',x_n) \in \overline{\Omega}\cap B_R(0)$.}
\end{align}
Observe that $\Psi(\partial\Omega\cap B_R(0))\subset \set{(x',x_n):x_n=0}$ and $\Psi(0)=0$. Moreover,  the function $\Psi : \overline \Omega\cap B_R(0) \to \Psi(\overline \Omega\cap B_R(0))$ is invertible, with a Lipschitz continuous inverse
$\Psi^{-1} :   \Psi(\overline \Omega\cap B_R(0)) \to \overline{\Omega}\cap B_R(0)$. Since, at this stage, we are merely assuming that $\partial \Omega \in C^{0,1}$, no additional regularity on $\Psi$ is available yet.   
Define $\bfJ : \overline{\Omega}\cap B_R(0)\to \setR ^{n\times n}$ as
\begin{equation}\label{J} \bfJ(x) = \nabla \Psi (x) \quad \hbox{for $x \in \overline{\Omega}\cap B_R(0)$.}
\end{equation}
%
%
%
%
Thus,
\begin{align}
 \bfJ (x', x_n)=
  \begin{pmatrix}
\bfI&0
\\
-\nabla \psi (x')& 1
\end{pmatrix}
=
 \begin{pmatrix}
1&0&\ldots&0
\\
0&\ddots&&\vdots
\\
 \vdots&&1&0
 \\
 - \psi_{x_1}(x')
 &\ldots&-\psi_{x_{n-1}}(x')&1
\end{pmatrix} 
 \end{align} 
for $(x', x_n) \in \overline{\Omega}\cap B_R(0)$.
Moreover, with some abuse of notation, we define $\bfJ^{-1} :  \Psi(\overline \Omega\cap B_R(0))  \to \setR ^{n\times n}$  as 
$\bfJ^{-1}(y) =  \nabla \Psi^{-1} (y)$ for $y \in  \Psi(\overline \Omega\cap B_R(0)) $. Hence,
$$\bfJ^{-1}(y) =  (\nabla \Psi)^{-1} (\Psi ^{-1}(y)) \quad \hbox{for $y \in  \Psi(\overline \Omega\cap B_R(0)) $.}$$
Therefore, $\bfJ^{-1}(y) \bfJ (\psi^{-1}(y)) =\bfI$, the identity matrix, for $y \in  \Psi(\overline \Omega\cap B_R(0)) $.
 Clearly $\det \bfJ (x)=1$ for $x \in \overline{\Omega}\cap B_R(0)$ and  $\det\bfJ^{-1} (y)=1$ for $y \in  \Psi(\overline \Omega\cap B_R(0)) $. Hence $\abs{\Psi(E)}=\abs{E}$ for every measurable set $E \subset \overline{\Omega}\cap B_R(0)$, and $\abs{\Psi^{-1}(E)}=\abs{E}$  or every measurable set $E \subset   \Psi(\overline \Omega\cap B_R(0)) $.
 \\ Owing to the Lipschitz continuity of $\Psi$ and $\Psi^{-1}$,  there exist  constants 
\begin{equation}\label{lL}
\lambda \leq 1 \leq \Lambda
\end{equation}
 such that 
 \begin{align}\label{inclusion1}
  B_{\lambda r}^+(0)\subset \Psi(\Omega\cap B_r(0)) \subset B_{\Lambda r}^+(0)
\end{align}
if$0<r \leq R$,
and
 \begin{align}\label{inclusion2}
 &  \Omega  \cap B_{\frac r\Lambda }(0)\subset \Psi^{-1}(B_r^+(0)) \subset  \Omega \cap B_{\frac{r}{\lambda}}(0)
 \end{align}
if   $B_r^+(0) \subset \Psi(\overline \Omega\cap B_R(0))$. Note that the constant $C$ appearing in \eqref{eq:fat} only depends on a lower estimate for $R$ and $\lambda$ and on an upper estimate for $\Lambda$.
 \\ 
Next,
 given  a function $\bff$ on $\overline{\Omega}\cap B_R(0)$,  we define the function $\widetilde{\bff}$ on $ \Psi(\overline \Omega\cap B_R(0))$ as
\begin{equation}\label{ftilde}\widetilde{\bff}(y)=\bff (\Psi^{-1}(y)) \quad \hbox{for $y \in  \Psi(\overline \Omega\cap B_R(0))$.}
\end{equation}
Hence, if $\bff$ is differentiable, then
$$
\nabla_y \widetilde{\bff}(y)=\nabla_x\bff(\Psi^{-1}(y)){\bfJ^{-1}(y) }\quad \hbox{for $y \in  \Psi(\overline \Omega\cap B_R(0))$,}
$$
and
\[
 \nabla_x\bff(x)=\nabla_y\widetilde{\bff}(\Psi(x))\bfJ(x) \quad \hbox{for $x \in \overline{\Omega}\cap B_R(0)$,} 
\]
where $\nabla _x$ and $\nabla _y$ denote gradient with respect to the variables $x$ and $y$, respectively.
By the boundedness of $\bfJ$, we have that $$\abs{\nabla_x \bff(x)}\approx\abs{\nabla_y \widetilde{\bff}(y)} \quad \hbox{if $y=\Psi(x)$,}$$
up to multiplicative constants  depending only 
the Lipschitz constants of $\Psi$ and $\Psi^{-1}$. 
\\ Our aim is now to show that, if $\bfu$ is a solution to problem \eqref{dirloc}, then the function
$\widetilde{\bfu}$,  associated with $\bfu$ as in \eqref{ftilde}, solves a similar problem, involving  an elliptic system with variable coefficients.  To this purpose,    with define, for each $y \in  \Psi(\overline \Omega\cap B_R(0))$,  the function $\bfA_{\widetilde \bfJ}: \mathbb R^{N\times n} \to \mathbb R^{N\times n}$ as in  \eqref{AT}, with $\bfT^{-1}$ replaced by $\widetilde \bfJ(y)$. 
%
%
Thereby, 
$$
\bfA_{\widetilde\bfJ}(\nabla_y \widetilde{\bfu}(y)){=\bfA\big(\nabla_y \widetilde{\bfu}(y)\widetilde\bfJ(y)\big)\widetilde\bfJ^t(y)}=\bfA\big(\nabla_x {\bfu}(\Psi^{-1}(y))\big)\bfJ^{{t}}(\Psi^{-1}(y)) \quad \hbox{for $y \in  \Psi(\overline \Omega\cap B_R(0))$.}
$$
Since $\det{\bfJ}^{-1}=1$,   by \eqref{eq:matrix1} one has that
\begin{align*}
\int_{{\Omega\cap B_R(0)}} \bfA(\nabla_x \bfu)\cdot \nabla_x\bfphi\dx
&=\int_{\Psi (\Omega\cap B_R(0))} \bfA(\nabla_x \bfu (\Psi^{-1}(y)))\cdot \nabla_x\bfphi(\Psi^{-1}(y)))\det \bfJ^{-1}(y)\dy
\\
&=\int_{\Psi (\Omega\cap B_R(0))} \bfA\big(\nabla_y \widetilde{\bfu} (y)\bfJ( \Psi^{-1}(y))\big)\cdot \nabla_y\widetilde{\bfphi}(y)\bfJ(\Psi^{-1}(y))\dy
\\
&=\int_{\Psi (\Omega\cap B_R(0))} \bfA\big(\nabla_y \widetilde{\bfu} (y)\bfJ( \Psi^{-1}(y))\big)\bfJ^t(\Psi^{-1}(y))\cdot \nabla_y\widetilde{\bfphi}(y)\dy
\\
&=\int_{\Psi (\Omega\cap B_R(0))} \bfA_{\widetilde \bfJ}(\nabla_y \widetilde{\bfu})\cdot \nabla_y\widetilde{\bfphi}(y)\dy
\end{align*}
 for every function $\bfphi\in C^\infty_0({\Omega}\cap B_R(0))$.
A similar chain strarting from the integral 
$$\int_{{\Omega}\cap B_R(0)} (\bfF -\bfF_0)  \cdot \nabla_x\bfphi\dx$$
for an arbitrary matrix $\bfF_0\in \setR^{N\times n}$, and the use of equation \eqref{weaksolBR}
imply that
\begin{align}\label{nov1}
\int_{\Psi (\Omega\cap B_R(0))} \bfA_{\widetilde \bfJ}(\nabla_y\widetilde{\bfu}) \nabla_y\widetilde\bfphi\dy
&=\int_{\Psi (\Omega\cap B_R(0))}(\widetilde{\bfF}-\bfF_0)\widetilde \bfJ^t\cdot \nabla_y\widetilde\bfphi\dy
\end{align}
Equation \eqref{nov1} tells us that the function $\widetilde \bfu$ solves the following
problem: 
\begin{align}
\label{eq:boundary}
 \begin{cases}
 -\divergence_y(\bfA_{\widetilde\bfJ}(\nabla_y\widetilde{\bfu}))
= -\divergence_y{\widehat{\bfF}}& \quad \text{ in } B_{\lambda R}^+(0)\\
\widetilde{\bfu}(y',0)=0 & \quad \text{ on }\set{y_n=0} \cap B_{\lambda R}(0),
 \end{cases}
\end{align}
where  we have set $\widehat{\bfF}=(\widetilde\bfF-\bfF_0)\widetilde \bfJ^t$ and exploited \eqref{inclusion1}.
Let us introduce the matrix $\bfJ_s \in \setR^{n \times n}$ defined as 
\begin{equation}\label{Js}
\bfJ_s =\mean{\bfJ}_{ \Omega \cap B_s(0)} \qquad \hbox{for $s\in (0,\lambda R)$.} \color{black}
\end{equation}
 Our purpose is to apply inequality \eqref{june43-hs} to  the solution $\widetilde{\bfu}$ to system \eqref{eq:boundary}, that can be rewritten as 
\begin{align}
\label{eq:boundaryfix}
 \begin{cases}
 -\divergence_y(\bfA_{\bfJ_s}(\nabla_y {\widetilde\bfu}))
= -\divergence_y\big(\bfA_{\bfJ_s}(\nabla_y\widetilde{\bfu})-\bfA_{\widetilde\bfJ}(\nabla_y\widetilde{\bfu})+\widehat{\bfF}\big)& \quad \text{ in } B_{\lambda R}^+(0)\\
\widetilde{\bfu}(y',0)=0 & \quad \text{ on } \set{y_n=0} \cap B_{\lambda R}(0).
 \end{cases}
\end{align}
Choose $\Lambda$ so large that,  in addition to \eqref{inclusion1} and \eqref{inclusion2}, one has that $\bfJ_s B_s(0)\subset B_{\lambda R}(0)$ for  $s\in (0,\lambda R)$.
Following the approach of the previous section, we define $\overline{\bfu} :    H_{\bfJ_s} \cap B_{\frac{\lambda}{\Lambda}  R}(0)  \to \setR^N$ as
\begin{equation}
\label{ubar}
\overline{\bfu}(z)=\widetilde{\bfu}(\bfJ_s z) \qquad \hbox{for $z \in  H_{\bfJ_s} \cap B_{\frac{\lambda}{\Lambda}  R}(0)$.}
\end{equation}
 Hence, $$\nabla_z \overline{\bfu}(z)=\nabla_y\widetilde{\bfu}(\bfJ_s z)\bfJ_{s} \quad \hbox{ for $z \in  H_{\bfJ_s} \cap B_{\frac{\lambda}{\Lambda}  R}(0)$.}$$
Notice that, by the special form of $\bfJ$, and hence of $\bfJ_s$,  given $z \in \setR^n$, we have that
\[
(\bfJ_s z)_n\geq 0\quad \text{ if and only if } \quad z_n\geq \mean{\nabla\psi}_{{\Omega \cap B_s}}\cdot z' .
\]
{ Also, define accordingly 
$\overline{\bfF} :    H_{\bfJ_s} \cap B_{\frac{\lambda}{\Lambda}  R}(0)  \to \setR^{N}$ as 
\begin{equation}
\label{Fbar}
\overline{\bfF}(z)=\widetilde{\bfF}(\bfJ_s z) \qquad \hbox{for $z \in  H_{\bfJ_s} \cap B_{\frac{\lambda}{\Lambda}  R}(0)$.}
\end{equation}
and
$\overline{\bfJ} :    H_{\bfJ_s} \cap B_{\frac{\lambda}{\Lambda}  R}(0)  \to \setR^{N\times n}$ as 
\begin{equation}
\label{Jbar}
\overline{\bfJ}(z)=\widetilde{\bfJ}(\bfJ_s z) \qquad \hbox{for $z \in  H_{\bfJ_s} \cap B_{\frac{\lambda}{\Lambda}  R}(0)$.}
\end{equation}}
An analogous argument as in the proof of equation \eqref{eq:pertbar} implies that $\overline \bfu$ is a solution to the problem  
\begin{align}
\label{eq:fixscal1}
 \begin{cases}
 -\divergence_z(\bfA(\nabla\overline{\bfu}))
= -\divergence_z\big(\bfA(\nabla\overline{\bfu}) - \bfA_{\bfJ_s^{-1}\overline\bfJ}(\nabla\overline{\bfu}) \color{black} +\underline{\bfF}\big)& \quad \text{ in } H_{\bfJ_s} \cap B_{\frac{\lambda}{\Lambda}  R}(0)\\
\overline{\bfu}=0 & \quad \text{ on } \set{(\bfJ_s z)_n = 0}\cap B_{\frac{\lambda}{\Lambda} R}(0),
 \end{cases}
\end{align}
where we have set  
$$\underline {\bfF}(z)=(\overline{\bfF}(z)-\bfF_0)\overline \bfJ^t(z)(\bfJ_s^{-1})^t \quad \hbox{for $z \in   H_{\bfJ_s} \cap B_{\frac{\lambda}{\Lambda}  R}(0)$.}$$
%
%
Thus,
\begin{align}
\label{eq:fbar}
\underline{\bfF}(z)
 = ({\bfF}(\psi ^{-1}(\bfJ_s z))-\bfF_0)\bfJ^t(\psi ^{-1}(\bfJ_s z))(\bfJ_s^{-1})^t 
\end{align}
for $z \in  H_{\bfJ_s} \cap B_{\frac{\lambda}{\Lambda}  R}(0)$.

\subsection{Decay near the  boundary}
We are now in a postion to state and prove a crucial decay estimate at the  boundary for the gradient of the solution $\bfu$   to the Dirichlet problem \eqref{eq:sysA}. Given    $R>0$ and $x \in \partial \Omega$,
define $\bfA_s \in \setR^{N\times n}$, for $s\in (0,R]$, as
\begin{align}
\label{a9}
\begin{cases}
 (\bfA_s)_{ki}=0 & \quad \text{ for } i\in\set{1,...,n-1},k\in \set{1,...,N}  \\
 (\bfA_s)_{kn}=\mean{\bfA_{kn}(\nabla \bfu)}_{\Omega \cap B_s}& \quad \text{ for }k\in \set{1,...,N}.
 \end{cases}
\end{align}
\color{black}

\begin{proposition} \label{pro:boundarydecay}
Let $\Omega$ be a bounded open set in $\setR^n$ and let $x\in \partial \Omega$. Assume that there exists $R>0$ and  local coordinates
  in $\Omega \cap B_R(x)$,  as in Subsection \ref{sec:boundary}, such that $\psi  \in W^1\mathcal L^{\sigma (\cdot)} \cap C^{0,1}$ for some parameter function $\sigma$. Assume that  $\bfF\in L^{p'q}(\Omega)$ for some $q>1$, and let 
$\bfu$ be the weak solution to the Dirichlet problem \eqref{eq:sysA}. 
 Then there exist constants $c>0$ and $\theta\in (0,1)$, depending on
 $n,p,N, \omega, q, \Omega$ such that
%
%
\begin{align}\label{Nov11}
&\frac{1}{\omega(\theta s)}\bigg(\dashint_{\Omega \cap  B_{\theta s}(x)}\abs{\bfA(\nabla\bfu)-\bfA_{\theta s} }^{\min{\set{2,p'}}}
\dy\bigg)^{\frac{1}{\min{\set{2,p'}}}}
\\ \nonumber
&\quad \leq
\frac{c\sigma(s)}{\omega(s)}\dashint_{\Omega \cap B_{s}(x)}\abs{\bfA(\nabla{\bfu})}\dy
+\frac{c\sigma(s)+c}{\omega(s)}\bigg(\dashint_{\Omega \cap B_{s}(x)}\abs{\bfF-\bfF_0}^{p'q}\dy \bigg)^\frac1{p'q} 
\\ \nonumber
&\qquad + \frac{1}{2\omega(s)}\bigg(\dashint_{\Omega \cap B_{s}(x)}\abs{\bfA(\nabla{\bfu})-\bfA_{s} }^{\min{\set{2,p'}}}\dy\bigg)^\frac{1}{\min{\set{2,p'}}}
\end{align}
for every matrix $\bfF _0 \in \setR^{N\times n}$ and every $s\in [0,R]$.
\end{proposition}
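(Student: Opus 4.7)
The strategy is to combine the flattening/rotation construction of Subsection~\ref{sec:boundary} with the flat-boundary decay estimate of Proposition~\ref{cor:decay-hs}, treating the gap between the variable-coefficient operator $\bfA_{\widetilde\bfJ}$ and its frozen version $\bfA_{\bfJ_s}$ as a right-hand side perturbation. First I would fix $x=0$ and translate/rotate so that we are in the setting of Subsection~\ref{sec:boundary}. Starting from the solution $\bfu$ of \eqref{eq:sysA}, the flattened function $\widetilde\bfu$ solves \eqref{eq:boundary}, and after the linear change of variables $y=\bfJ_s z$ performed with the averaged matrix $\bfJ_s$ from \eqref{Js}, the function $\overline\bfu$ from \eqref{ubar} solves \eqref{eq:fixscal1} on $D_{\lambda R/\Lambda}$ with a flat Dirichlet boundary piece, the new right-hand side being
\[
\bfG(z):=\bfA(\nabla\overline\bfu)-\bfA_{\bfJ_s^{-1}\overline\bfJ}(\nabla\overline\bfu)+\underline\bfF,
\]
with $\underline\bfF$ as in \eqref{eq:fbar}. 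Since $\bfJ_s$ is close to the identity uniformly in $s$ (with constants $\lambda,\Lambda$ from \eqref{lL}), $\overline\bfu$ fits the hypotheses of Proposition~\ref{cor:decay-hs} at a comparable scale.

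Next I would apply Proposition~\ref{cor:decay-hs} to $\overline\bfu$, obtaining the decay of the $\min\{2,p'\}$-oscillation of $\bfA(\nabla\overline\bfu)$ on $D_{\theta s}$ in terms of the same quantity on $D_s$ plus a term of the form $\omega(s)^{-1}\bigl(\dashint_{D_s}|\bfG-\bfG_0|^{p'}\bigr)^{1/p'}$, where we may and do choose the constant $\bfG_0$ so that it absorbs the natural constant $\bfF_0 \bfJ_s^t (\bfJ_s^{-1})^t=\bfF_0$ appearing in $\underline\bfF$. The crux is then to bound this $L^{p'}$-deviation of $\bfG$. The $\underline\bfF$ contribution is controlled directly: $\bfJ$, and hence $\overline\bfJ$, is bounded by the Lipschitz constant of $\psi$, so pulling the coordinate change back produces the term involving $(\dashint_{\Omega\cap B_s}|\bfF-\bfF_0|^{p'}\dy)^{1/p'}$, which is dominated by its $L^{p'q}$ counterpart via H\"older.

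The main obstacle is to handle the genuinely nonlinear perturbation $\bfA(\nabla\overline\bfu)-\bfA_{\bfJ_s^{-1}\overline\bfJ}(\nabla\overline\bfu)$. Writing $\bfT(z):=\bfJ_s^{-1}\overline\bfJ(z)$, which is close to the identity, the definition \eqref{AT} together with the homogeneity of $\bfA$ and its Lipschitz-type control on bounded sets yields the pointwise bound
\[
\bigl|\bfA(\nabla\overline\bfu)-\bfA_{\bfT}(\nabla\overline\bfu)\bigr|\leq c\,|\bfI-\bfT|\,|\nabla\overline\bfu|^{p-1}=c\,|\bfI-\bfT|\,|\bfA(\nabla\overline\bfu)|.
\]
Splitting by H\"older with exponents $q$ and $q'$ gives
\[
\Bigl(\dashint|\bfA-\bfA_{\bfT}|^{p'}\Bigr)^{1/p'}\leq c\Bigl(\dashint|\bfI-\bfT|^{p'q'}\Bigr)^{1/(p'q')}\Bigl(\dashint|\bfA(\nabla\overline\bfu)|^{p'q}\Bigr)^{1/(p'q)}.
\]
The first factor is bounded by $c\,\sigma(s)$: indeed $\bfI-\bfT$ is affinely built from $\nabla\psi-\mean{\nabla\psi}_{\Omega\cap B_s}$, the assumption $\nabla\psi\in\mathcal L^{\sigma(\cdot)}$ controls its $L^1$-oscillation by $\sigma(s)$, and a John--Nirenberg-type upgrade (valid since $\sigma$ is admissible) promotes this to the $L^{p'q'}$-oscillation. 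The second factor is exactly where Proposition~\ref{cor:VPL} enters: it gives
\[
\Bigl(\dashint_{\Omega\cap B_s}|\bfA(\nabla\bfu)|^{p'q}\Bigr)^{1/(p'q)}\leq c\dashint_{\Omega\cap B_{2s}}|\bfA(\nabla\bfu)|+c\Bigl(\dashint_{\Omega\cap B_{2s}}|\bfF-\bfF_0|^{p'q}\Bigr)^{1/(p'q)}.
\]

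Finally I would assemble the pieces: dividing by $\omega(s)$, using \eqref{eq:omega condition} to absorb the passage from $s$ to $2s$ into the constants, and undoing the two changes of variables (noting $|\det\bfJ|=1$ and $|\det\bfJ_s|\approx 1$ so that averages on $D_s$ compare with averages on $\Omega\cap B_s(x)$) converts the decay estimate for $\overline\bfu$ on the half-type sets $D_{\theta s}$ into the stated estimate on $\Omega\cap B_{\theta s}(x)$. The matrix $\bfA_{\theta s}$ defined in \eqref{a9} is, after this transport, exactly the counterpart of the $\overline\bfA_{\theta s}$ appearing in Proposition~\ref{cor:decay-hs}, because only the last column of $\bfA(\nabla\overline\bfu)$ survives averaging on $D_{\theta s}$ and this survives the change of variable $y=\bfJ_s z$ thanks to the upper-triangular structure of $\bfJ_s$. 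Collecting the $\sigma(s)$-terms and the $\bfF$-terms in the required form yields \eqref{Nov11}.
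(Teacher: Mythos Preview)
Your approach is essentially the paper's own: flatten, freeze coefficients at scale $s$, apply Proposition~\ref{cor:decay-hs} to $\overline\bfu$, and control the perturbation $\bfA-\bfA_{\bfJ_s^{-1}\overline\bfJ}$ via H\"older together with \eqref{Lsigma} and Proposition~\ref{cor:VPL}. Two points, however, are glossed over.

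First, a \emph{single} application of Proposition~\ref{cor:decay-hs} gives the factor $\tfrac12$ only at the $D_s$-level. When you transport the estimate back to $\Omega\cap B_s$, the inclusions \eqref{inclusion1}--\eqref{inclusion2} and the replacement of means via \eqref{meanq} introduce multiplicative constants depending on $\lambda,\Lambda$, so the oscillation term on the right reappears with a coefficient $c/2$ that need not be at most $\tfrac12$. The paper fixes this by iterating Proposition~\ref{cor:decay-hs} $k$ times to replace $\tfrac12$ by an arbitrary $\delta\in(0,1)$ at the $D_s$-level (inequality \eqref{june43-hs1}), and only afterwards performing the transport; choosing $\delta$ small enough absorbs the constants, and $\theta$ is redefined accordingly.

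Second, undoing the changes of variables is not a pure relabeling of integrals: since $\nabla\overline\bfu(z)=\nabla\bfu(\Psi^{-1}(\bfJ_s z))\,\bfJ^{-1}(\bfJ_s z)\,\bfJ_s$, one has $\bfA(\nabla\overline\bfu(z))\neq\bfA(\nabla\bfu(\Psi^{-1}(\bfJ_s z)))$ in general, and the quantities $\overline\bfA_{\theta s}$ and $\bfA_{\theta s}$ are therefore not literally equal. The discrepancy is again pointwise bounded by $c\,|\bfJ-\bfJ_s|\,|\nabla\bfu|^{p-1}$ (this is \eqref{Nov14} in the paper), so it feeds into the same $\sigma(s)$-terms you already control; but it must be accounted for explicitly both when passing from $\Omega\cap B_{\theta s}$ to $\theta D_s$ and when passing back from $D_s$ to $\Omega\cap B_{\Lambda s/\lambda}$.
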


The following algebraic inequality will be needed in the proof of Proposition \ref{pro:boundarydecay}.

\begin{lemma}
\label{lem:hammerT}
Assume that the matrices $\bfT _i \in \setR^{n\times n}$,  $i= 1,2$, are such that  
 $\lambda \abs{\bfQ}\leq \abs{\bfT_i \bfQ}\leq \Lambda \abs{\bfQ}$, 
for some $\Lambda >\lambda >0$,  and every $\bfQ\in \setR^{N\times n}$. 
Let $\bfA_{\bfT_i}$,  $i= 1,2$, be the functions defined as in \eqref{AT}.
Then there exists a constant $c = c(p,n,N,\lambda , \Lambda)$ such that
\begin{align*}
 \abs{\bfA_{\bfT_1}(\bfQ)-\bfA_{\bfT_2}(\bfQ)}\leq c \abs{\bfQ}^{p-1} \abs{\bfT_1-\bfT_2} \quad \hbox{for $\bfQ\in \setR^{N\times n}$.}
\end{align*}
\end{lemma}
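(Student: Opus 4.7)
My plan is to split the difference by inserting an intermediate term:
\begin{align*}
\bfA_{\bfT_1}(\bfQ)-\bfA_{\bfT_2}(\bfQ)=\bigl(\bfA(\bfQ\bfT_1)-\bfA(\bfQ\bfT_2)\bigr)\bfT_1^{t}+\bfA(\bfQ\bfT_2)\bigl(\bfT_1-\bfT_2\bigr)^{t},
\end{align*}
and estimate the two summands separately. The first piece is the genuinely nonlinear contribution (change of argument of $\bfA$), while the second is just a linear rearrangement whose size is controlled by the pointwise bound on $\bfA$ itself.

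For the second summand, I would use the elementary identity $|\bfA(\bfR)|=|\bfR|^{p-1}$ together with the assumed upper bound $|\bfQ\bfT_2|\leq\Lambda|\bfQ|$, and the equivalence of matrix norms in finite dimensions, to conclude $|\bfA(\bfQ\bfT_2)(\bfT_1-\bfT_2)^{t}|\leq c(n,N)\Lambda^{p-1}|\bfQ|^{p-1}|\bfT_1-\bfT_2|$, which is already of the required form.

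The first summand is the main (though still standard) point. Here the plan is to invoke the well-known pointwise inequality
\begin{align*}
|\bfA(\bfP_1)-\bfA(\bfP_2)|\leq c(p)\bigl(\max\{|\bfP_1|,|\bfP_2|\}\bigr)^{p-2}|\bfP_1-\bfP_2|,
\end{align*}
valid for all $p>1$ and $\bfP_1,\bfP_2\in\setR^{N\times n}$ not both zero; this is part of the shifted-Orlicz toolbox used throughout the paper. Applied to $\bfP_i=\bfQ\bfT_i$, it yields the factor $(\max\{|\bfQ\bfT_1|,|\bfQ\bfT_2|\})^{p-2}|\bfQ\bfT_1-\bfQ\bfT_2|$. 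The two-sided bound $\lambda|\bfQ|\leq|\bfQ\bfT_i|\leq\Lambda|\bfQ|$ controls the maximum factor by $\max\{\lambda^{p-2},\Lambda^{p-2}\}|\bfQ|^{p-2}$ regardless of the sign of $p-2$, and clearly $|\bfQ\bfT_1-\bfQ\bfT_2|=|\bfQ(\bfT_1-\bfT_2)|\leq c(n,N)|\bfQ||\bfT_1-\bfT_2|$. Multiplying by $|\bfT_1^{t}|\leq c(n)\Lambda$ and combining with the bound on the second summand gives the asserted inequality.

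The only mildly delicate point is keeping track of both regimes $p\geq 2$ and $1<p<2$ when estimating $(\max\{|\bfQ\bfT_1|,|\bfQ\bfT_2|\})^{p-2}$: the upper bound $\Lambda$ is used when $p\geq 2$, while the lower bound $\lambda$ enters when $p<2$, which is precisely why the hypothesis demands two-sided control. No convexity or monotonicity argument beyond the pointwise Lipschitz-type estimate for $\bfA$ is needed, and the resulting constant depends only on $p$, $n$, $N$, $\lambda$, $\Lambda$.
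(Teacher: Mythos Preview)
Your argument is correct and follows essentially the same route as the paper's proof: the same add-and-subtract decomposition $\bfA_{\bfT_1}(\bfQ)-\bfA_{\bfT_2}(\bfQ)=(\bfA(\bfQ\bfT_1)-\bfA(\bfQ\bfT_2))\bfT_1^{t}+\bfA(\bfQ\bfT_2)(\bfT_1-\bfT_2)^{t}$, the same key pointwise estimate $|\bfA(\bfP_1)-\bfA(\bfP_2)|\approx(\max\{|\bfP_1|,|\bfP_2|\})^{p-2}|\bfP_1-\bfP_2|$ (this is equation~\eqref{eq:hammerd} in the paper), and the same use of the two-sided bounds to control the resulting factors. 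Your explicit remark distinguishing the cases $p\geq 2$ and $1<p<2$ when bounding $(\max\{|\bfQ\bfT_1|,|\bfQ\bfT_2|\})^{p-2}$ makes transparent why both $\lambda$ and $\Lambda$ are needed, which the paper leaves implicit.
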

\begin{proof}
One has that
\begin{align}
      \label{eq:hammerd}
      \abs{\bfA(\bfP)-\bfA{(\bfQ)}}\approx \max\set{\abs{\bfP},\abs{\bfQ}}^{p-2}\abs{\bfP-\bfQ} \quad  \hbox{for $\bfP, \bfQ \in \setR^{N\times n}$,}
     \end{align}
up to multiplicative constants depending on $n,N,p$. Hence, there exist constants $c$ and $c'$ such that
\begin{align*}
 &\abs{\bfA_{\bfT_1}(\bfQ)-\bfA_{\bfT_2}(\bfQ)}
 \leq \abs{ \bfA(\bfQ \bfT_1)-\bfA(\bfQ\bfT_2)}\abs{\bfT_1}+ \abs{\bfT_1-\bfT_2}\abs{\bfA(\bfQ \bfT_2)}
 \\
 &\leq \Big(\abs{\bfT_1\bfQ}+\abs{\bfT_2\bfQ}\Big)^{p-2}\abs{\bfQ} \abs{\bfT_1-\bfT_2} +c\abs{\bfQ}^{p-1}\abs{\bfT_1-\bfT_2}
 \leq c'\abs{\bfQ}^{p-1}\abs{\bfT_1-\bfT_2}
 \end{align*}
for every $\bfQ\in \setR^{N\times n}$.
\end{proof} 

\begin{proof}[Proof of Proposition \ref{pro:boundarydecay}] Without loss of generality, we may assume that $x=0$, and, 
for simplicity, we denote the ball $B_r(0)$  by $B_r$ throughout this proof. Assume, for the time being, that $q\in (1,q_0]$, where $q_0$ is the exponent appearing in the statement of 
Proposition~\ref{cor:VPL}.
To begin with, note that, since    $\psi  \in W^1\mathcal L^{\sigma (\cdot)}$,   there exists a constant $C$ such that
\begin{equation}\label{Lsigma}
\sup_{r\in (0,R)} \frac 1{\sigma (r)} \bigg(\dashint_{\Omega \cap B_r\color{black}}\abs{\bfJ-\mean{\bfJ}_{ \Omega \cap  B_r \color{black}}}^{p' q'}\dx\bigg)^\frac{1}{q'p'} \leq C,
\end{equation}
where $\bfJ$ is defined as in \eqref{J}. This is a consequence of the definition of Campanato seminorms and of the observation following their definition in \eqref{campnorm}. 
\\
\color{black}
We want to apply Proposition \ref{cor:decay-hs},  with
$\bfT^{-1}= \bfJ_s$,
 to the solution $\overline{\bfu}$, given by \eqref{ubar}, to problem    \eqref{eq:fixscal1}. To this purpose,  define $D_s$ as in \eqref{Ds}, with this choice of 
 $\bfT^{-1}$. 
%
Namely,
$$D_s = H_{\bfJ_s} \cap B_s.$$ 
Also, we set
 $$\theta D_s=H_{\bfJ_s}\cap B_{\theta s}$$
for $\theta \in (0,1]$.
 Next, define $\overline{\bfA}_\tau \in \setR^{N\times n}$ for $\tau\in (0,s]$ by
\begin{align}
\begin{cases}
  (\overline\bfA_\tau)_{ki}=0 & \quad \text{ for }i\in\set{1,...,n-1},k\in \set{1,...,N}
\\   (\overline\bfA_\tau)_{kn}=\mean{\bfA_{kn}(\nabla \overline\bfu)}_{\frac{\tau}{s}D_s}& \quad \text{ for }k\in \set{1,...,N}.
 \end{cases}
\end{align}
An application of Proposition~\ref{cor:decay-hs} then
%
tells us that  there exist $\theta\in (0,1)$ and  $c>0$ such that
\begin{align}\label{june43-hs1a}
& \frac{1}{\omega(\tau \theta s)}  \bigg(\dashint_{\theta \tau  D_s} \abs{\bfA(\nabla \overline\bfu)-\overline\bfA_{\theta \tau  s} }^{\min\set{2,p'}}\dz\bigg)^\frac{1}{\min{\set{2,p'}}}
 \\  \nonumber &  \leq   \frac{1}{2\omega(\tau s)}
 \bigg(\dashint_{\tau  D_s} \abs{\bfA(\nabla \overline\bfu)-\overline\bfA_{\tau  s} }^{\min\set{2,p'}}\dz\bigg)^\frac{1}{\min{\set{2,p'}}}
 \\ \nonumber & \quad
+   \frac{c}{\omega( \tau  s)} \bigg(\dashint_{\tau  D_s}\abs{\underline \bfF-\bfF_0
}^{p'}\dz\bigg)^\frac1{p'}
+  \frac{c}{\omega(\tau  s)} \bigg(\dashint_{\tau  D_s}\abs{\bfA(\nabla\overline{\bfu})-{\bfA_{\bfJ_s^{-1}\overline{\bfJ}}(\nabla\overline{\bfu})}}^{p'}\dz\bigg)^\frac1{p'}
\end{align}
  for every $\bfF_0 \in \setR^{N\times n}$ and every $\tau \in  (0,1)$.
Given any $\delta\in (0,1)$, let $k\in \setN$ be such that $2^{-k}\leq \delta$. Iterating inequality \eqref{june43-hs1a}, with { $\tau= \theta^j$, $j=\set{0,1,...,k}$}, tells us that there exist $\theta \in (0,1)$ and $c>0$, depending on $\delta$, such that 
\begin{align}\label{june43-hs1}
\frac{1}{\omega(\theta s)} & \bigg(\dashint_{\theta D_s} \abs{\bfA(\nabla \overline\bfu)-\overline\bfA_{\theta s} }^{\min\set{2,p'}}\dz \bigg)^\frac{1}{\min{\set{2,p'}}}
 \\ \nonumber &  \leq   \frac{\delta}{\omega(s)}
 \bigg(\dashint_{D_s} \abs{\bfA(\nabla \overline\bfu)-\overline\bfA_{ s} }^{\min\set{2,p'}}\dz\bigg)^\frac{1}{\min{\set{2,p'}}}
 \\ \nonumber & \quad
+   \frac{c}{\omega(s)} \bigg(\dashint_{D_s}\abs{{\underline\bfF}}^{p'}\dz\bigg)^\frac1{p'}
+  \frac{c}{\omega(s)} \bigg(\dashint_{D_s}\abs{\bfA(\nabla\overline{\bfu})-{ \bfA_{\bfJ_s^{-1}\overline \bfJ}(\nabla\overline{\bfu}})
}^{p'}\dz\bigg)^\frac1{p'}
\end{align}
 for every $\bfF_0 \in \setR^{N\times n}$.
\color{black}
Fix $r$ such that  ${\frac{\Lambda}{\lambda} r}=\theta s$. By property \eqref{meanq}
applied to {the $n$-th component},  a change of variables, and the fact that $\det(\nabla \Psi^{-1} \bfJ_s)=\det(\nabla \Psi^{-1})\det(\bfJ_s)=1 \cdot 1 = 1$, one has that
\begin{align}\label{Nov13}
&\frac{1}{\omega(r)}\bigg(\dashint_{\Omega \cap B_r}\abs{\bfA(\nabla\bfu)-
\bfA_r}^{\min{\set{2,p'}}}
\dx\bigg)^{\frac{1}{\min{\set{2,p'}}}}
\\ \nonumber
&\leq \frac{2}{\omega(r)}\bigg(\dashint_{\Omega\cap B_r}\abs{\bfA(\nabla \bfu)-\overline{\bfA}_{\theta s}}^{\min{\set{2,p'}}}\dx\bigg)^\frac{1}{\min{\set{2,p'}}}
\\ \nonumber &=
\frac{2}{\omega(r)}\bigg(\dashint_{\bfJ_s^{-1}\Psi(\Omega \cap B_r)}\abs{\bfA(\nabla\bfu(\Psi^{-1}(\bfJ_s z)))-\overline{\bfA}_{\theta s}}^{\min{\set{2,p'}}}\abs{\det \nabla \Psi^{-1}\bfJ_s}\dz\bigg)^\frac{1}{\min{\set{2,p'}}}
\\ \nonumber
&\leq \frac{2}{\omega(r)}\bigg(\dashint_{\theta D_{s}}\abs{\bfA(\nabla\overline{\bfu})-\overline{\bfA}_{\theta s} }^{\min{\set{2,p'}}}\dz\bigg)^\frac{1}{\min{\set{2,p'}}}
\\ \nonumber
&\quad +\frac{2}{\omega(r)}\bigg(\dashint_{\theta D_{s}}\abs{\bfA(\nabla\overline{\bfu})-\bfA(\nabla\bfu(\Psi^{-1}(\bfJ_s z)))}^{\min{\set{2,p'}}}\dz\bigg)^\frac{1}{\min{\set{2,p'}}}.
\end{align}
Observe that the second inequality holds since, by the second inclusion in \eqref{inclusion1} and the first inequality in \eqref{lL},
$$\bfJ_s^{-1}\Psi(\Omega \cap B_r) \subset \bfJ_s^{-1} B_{\Lambda r}^+ =  H_{\bfJ _s} \cap B_{\Lambda r}\subset  H_{\bfJ _s} \cap B_{\frac \Lambda \lambda r} =  H_{\bfJ _s} \cap B_{\theta s} =\theta D_s.$$\color{black}
 Now, note  the identities
\begin{equation}\label{marzo1}
\nabla_z\overline{\bfu}(z)=\nabla_x \bfu (\Psi^{-1}(\bfJ_s z))\, \bfJ^{-1}(\bfJ_s z)\, \bfJ_s\,
\end{equation}
and
\begin{equation}\label{marzo2}
\bfJ^{-1}(\bfJ_s z) \bfJ(\Psi^{-1}(\bfJ_s z))  = \bfI
\end{equation}
for $z \in D_R$,
and the inequality
\begin{equation}\label{marzo3}
\max\{|\bfP|, |\bfP \bfR|\}^{p-2}|\bfP| \leq \max\{1, |\bfR|^{{p-2}}\}|\bfP|^{p-1} \quad \hbox{for $\bfP \in \setR^{N\times n}$ and $\bfR \in \setR^{n\times n}$.}
\end{equation}
Owing to equations \eqref{marzo1}--\eqref{marzo3} and  \eqref{eq:hammerd},
the following chain holds:
\begin{align}\label{Nov14}
&\abs{\bfA(\nabla\overline{\bfu}(z))-\bfA(\nabla\bfu(\Psi^{-1}(\bfJ_s z)))}
\\ \nonumber
& =\abs{\bfA(\nabla_x \bfu (\Psi^{-1}(\bfJ_s z))\, \bfJ^{-1}(\bfJ_s z)\, \bfJ_s )-\bfA(\nabla\bfu(\Psi^{-1}(\bfJ_s z))\bfJ^{-1}(\bfJ_s z)\bfJ(\Psi^{-1}(\bfJ_s z))) } 
\\ \nonumber
& \approx \max\bigset{\abs{\nabla_x \bfu(\Psi^{-1}(\bfJ_s z))},
\abs{\nabla \bfu (\Psi^{-1}(\bfJ_s z)) \bfJ^{-1}(\bfJ_sz)\bfJ_s}}^{p-2}\\ \nonumber
&\qquad \times 
\abs{\nabla_x \bfu (\Psi^{-1}(\bfJ_s z)\, \bfJ^{-1}(\bfJ_s z)\, \bfJ_s  - \nabla\bfu(\Psi^{-1}(\bfJ_s z)) \bfJ^{-1}(\bfJ_s z) \bfJ(\Psi^{-1}(\bfJ_s z))}
\\ \nonumber
& \leq \max\bigset{\abs{\nabla_x \bfu(\Psi^{-1}(\bfJ_s z))},
\abs{\nabla \bfu (\Psi^{-1}(\bfJ_s z)) \bfJ^{-1}(\bfJ_sz)\bfJ_s}}^{p-2}\\ \nonumber
&\qquad \times 
\abs{\bfJ(\psi^{-1}(\bfJ_sz))-\bfJ_s }\abs{\bfJ^{-1}(\bfJ _s z)}\abs{\nabla_x \bfu(\Psi^{-1}(\bfJ_s z))}
\\ \nonumber
&\quad \leq c\abs{\bfJ(\Psi^{-1}(\bfJ_s z))-\bfJ_s}\abs{\nabla_x \bfu(\Psi^{-1}(\bfJ_s z))}^{p-1} \quad \hbox{for $z \in D_R$,}
\end{align}
for some constant $c$.
%
%
In the last inequality we have also made use of the fact that $\bfJ_s$ and $\bfJ^{-1}$ are bounded.
Thus, one has that
\begin{align}
\label{eq:hoelder}
\frac{1}{\omega(r)}& \bigg(\dashint_{\theta D_{s}}\abs{\bfA(\nabla\overline{\bfu})-\bfA(\nabla\bfu(\Psi^{-1}(\bfJ_s z)))}^{\min{\set{2,p'}}}\dz\bigg)^\frac{1}{\min{\set{2,p'}}}
\\ &\leq \nonumber
\frac{{\color{red} c}}{\omega(\theta s)}\bigg(\dashint_{\theta D_{s}}\Big(\abs{\bfJ(\Psi^{-1}(\bfJ_s z))-\bfJ_s}\abs{\nabla \bfu(\Psi^{-1}(\bfJ_s z))}^{p-1}\Big)^{p'}\dz\bigg)^\frac{1}{p'}
\\  \nonumber
&\leq \frac{c'}{\omega(s)} \bigg(\dashint_{D_s}\abs{\bfJ(\Psi^{-1}(\bfJ_s z))-\bfJ_s}^{p'q'}\dz\bigg)^\frac1{p'q'}\bigg(\dashint_{D_s}\abs{\bfA(\nabla \bfu(\Psi^{-1}(\bfJ_s z)))}^{p'q}\dz\bigg)^\frac1{p'q} 
\\  \nonumber
&\leq \frac{c''\sigma(s)}{\omega(s)}\bigg(\dashint_{\Omega \cap B_{\frac{\Lambda}{\lambda} s}}\abs{\bfA(\nabla{\bfu})}^{p'q}\dx \bigg)^\frac1{p'q}
\\  \nonumber
&\leq \frac{c'''\sigma(s)}{\omega(s)}\dashint_{\Omega \cap B_{2\frac{\Lambda}{\lambda} s}}\abs{\bfA(\nabla{\bfu})}\dx
+\frac{c'''\sigma(s)}{\omega(s)}\bigg(\dashint_{\Omega \cap B_{2\frac{\Lambda}{\lambda} s}}\abs{\bfF-\bfF_0}^{p'q}\dx \bigg)^\frac1{p'q},
\end{align}
for some constants $c,c', c'', c'''$ and for every $\bfF_0 \in \setR^{N\times n}$. Notice that 
 first inequality in \eqref{eq:hoelder}  is due to \eqref{Nov14} and  \eqref{eq:omega condition} (and to H\"older's inequality if $p\in (1,2)$), the second one to H\"older's inequality and \eqref{eq:omega condition},  the third one  to  \eqref{Lsigma}, to a change of variable, to the  boundedness  of $\bfJ_s$, $\bfJ$ and $\bfJ^{-1}$  and to the fact that 
$$\Psi^{-1}(\bfJ_s D_s)= \Psi^{-1}(\bfJ_s (H_{\bfJ_s}\cap B_s))= \Psi^{-1}(B_s^+)  \subset  \Omega \cap B_{\frac s\lambda} \subset  \Omega \cap B_{\frac{\Lambda }\lambda s }\,,$$\color{black}
and the last one to Proposition~\ref{cor:VPL}. 
%
%
%
Similarly, we have that
\begin{align} \label{Nov15}
& \frac{\delta}{\omega(s)}
 \bigg(\dashint_{D_s} \abs{\bfA(\nabla \overline\bfu)-\overline\bfA_{ s} }^{\min\set{2,p'}}\dz\bigg)^\frac{1}{\min{\set{2,p'}}}
\\ & \nonumber
\leq \frac{c\delta}{\omega(s)}\bigg(\dashint_{D_s}\abs{\bfA(\nabla\overline{\bfu})-\bfA_{\frac{\Lambda}{\lambda}{s}}}^{\min{\set{2,p'}}}\dz\bigg)^\frac{1}{\min{\set{2,p'}}}
\\ \nonumber
&\leq \frac{c\delta}{\omega(s)}\bigg(\dashint_{D_s}\abs{\bfA(\nabla\bfu(\Psi^{-1}(\bfJ_s z)))-\bfA_{\frac{\Lambda}{\lambda}{s}}}^{\min{\set{2,p'}}}\dz\bigg)^\frac{1}{\min{\set{2,p'}}}
\\  \nonumber
&\quad +\frac{c\delta}{\omega(s)}\bigg(\dashint_{D_s}\abs{\bfA(\nabla\overline{\bfu})-\bfA(\nabla\bfu(\Psi^{-1}(\bfJ_s z)))}^{\min{\set{2,p'}}}\dz\bigg)^\frac{1}{\min{\set{2,p'}}}
\\  \nonumber
&\leq  \frac{c'\delta}{\omega(\frac{\Lambda}{\lambda}s)}\bigg(\dashint_{\Omega \cap B_{\frac{\Lambda}{\lambda}s}}\abs{\bfA(\nabla{\bfu})-\bfA_{\frac{\Lambda}{\lambda}s}}^{\min{\set{2,p'}}}\dx\bigg)^\frac{1}{\min{\set{2,p'}}}
\\  \nonumber
&\quad 
+\frac{c'\delta\sigma(s)}{\omega(s)}\dashint_{\Omega\cap B_{2\frac{\Lambda}{\lambda} s}}\abs{\bfA(\nabla{\bfu})}\dx
+\frac{c'\delta\sigma(s)}{\omega(s)}\bigg(\dashint_{\Omega \cap B_{2\frac{\Lambda}{\lambda} s}}\abs{\bfF-\bfF_0}^{p'q}\dx \bigg)^\frac1{p'q} 
\end{align}
for some constants $c,c'$ and for every $\bfF_0 \in \setR^{N\times n}$.
By  the very definition of $\underline\bfF$ in \eqref{eq:fbar}, a change of variables and the boundedness of $\bfJ, \bfJ^{-1}, \bfJ_s$, 
\begin{align}\label{Nov16}
 \frac{1}{\omega(s)} \bigg(\dashint_{D_s}\abs{\underline\bfF}^{p'}\dx\bigg)^\frac1{p'}
& =
\frac{1}{\omega(s)} \bigg(\dashint_{D_s}\abs{({\bfF}(\Psi^{-1}(\bfJ_s z))-\bfF_0)\bfJ^t(\Psi^{-1}(\bfJ_s z))(\bfJ_s^{-1})^t}^{p'}\dz\bigg)^\frac1{p'}
\\ \nonumber
&\leq \frac{c}{\omega(s)} \bigg(\dashint_{D_{s}}\abs{{\bfF}(\Psi^{-1}(\bfJ_s z))-\bfF_0)}^{p'}\dz\bigg)^\frac1{p'}
\\ \nonumber
&\leq \frac{c}{\omega(s)} \bigg(\dashint_{\Omega\cap B_{\frac{\Lambda}{\lambda}s}}\abs{{\bfF}-\bfF_0}^{p'}\dx\bigg)^\frac1{p'}
\end{align}
for some constants $c,c'$ and for every $\bfF_0 \in \setR^{N\times n}$.
Finally, observe that
\begin{align}
 \bfJ_s^{-1} { \overline{\bfJ}(z)} =
 \begin{pmatrix}
\bfI&0
\\
\mean{\nabla \psi}_{\Omega \cap B_s}&1
\end{pmatrix}
  \begin{pmatrix}
\bfI& 0
\\
-\nabla \psi ((\Psi^{-1}(\bfJ_s z))')&1
\end{pmatrix}
=  \begin{pmatrix}
\bfI&0
\\
\mean{\nabla \psi}_{ \Omega \cap B_s}-\nabla \psi ((\Psi^{-1}(\bfJ_s z))')&1
\end{pmatrix} 
\end{align}
{for $z \in H_{\bfJ _s} \cap B_{\frac \lambda \Lambda R}$, where $\bfI$ is the unit matrix in $\setR^{(n-1) \times (n-1)}$, and $(\Psi^{-1}(\bfJ_s z))'$ denotes the vector in $\mathbb R^{n-1}$ of the first $(n-1)$ components of $\Psi^{-1}(\bfJ_s z)$.}
Thus, by Lemma~\ref{lem:hammerT}
\begin{align*}
\abs{\bfA(\nabla\overline{\bfu})-{\bfA_{\bfJ_s^{-1} \overline{\bfJ}}(\nabla\overline{\bfu})}}
\leq \abs{\bfI - \bfJ_s^{-1}\overline{\bfJ}}\abs{\nabla\overline{\bfu}}^{p-1}
\leq \abs{\bfJ_s - \overline{\bf J}}\abs{\nabla\overline{\bfu}}^{p-1}.
\end{align*}
Hence, 
via the  last two inequalities in equation \eqref{eq:hoelder} 
\begin{align}\label{Nov17}
 \frac{1}{\omega(s)}& \bigg(\dashint_{D_s}\abs{\bfA(\nabla\overline{\bfu})-\bfA_{\bfJ_s^{-1}\overline\bfJ}(\nabla\overline{\bfu})
}^{p'}\dz\bigg)^\frac1{p'}
\leq \frac{1}{\omega(s)} \bigg(\dashint_{{D_s}} \abs{\bfJ_s - \overline{\bf J}}^{p'}\abs{\nabla\overline{\bfu}}^{p}\dz\bigg)^\frac1{p'}
\\ \nonumber
&\leq \frac{c\sigma(s)}{\omega(s)}\dashint_{\Omega \cap B_{2\frac{\Lambda}{\lambda} s}}\abs{\bfA(\nabla{\bfu})}\dx
+\frac{c\sigma(s)}{\omega(s)}\bigg(\dashint_{\Omega \cap B_{2\frac{\Lambda}{\lambda} s}}\abs{\bfF-\bfF_0}^{{qp'}}\dx \bigg)^\frac1{{qp'}} 
\end{align}
for some constant $c$ and for every $\bfF_0 \in \setR^{N\times n}$.
Combining inequalities \eqref{june43-hs1}, \eqref{Nov13}, \eqref{eq:hoelder}, \eqref{Nov15}, \eqref{Nov16} and \eqref{Nov17} yields, on enlarging the domains of integration when necessary and making use of 
 \eqref{eq:omega condition},  
\begin{align}
\label{decayfirst}
&\frac{1}{\omega(\frac{\lambda}{\Lambda}\theta s)}\bigg(\dashint_{ \Omega \cap B_{\frac{\lambda}{\Lambda}\theta s}}\abs{\bfA(\nabla\bfu)-\bfA_{\frac{\lambda}{\Lambda}\theta s} }^{\min{\set{2,p'}}}
\dx\bigg)^{\frac{1}{\min{\set{2,p'}}}}
\\ \nonumber
& \leq
\frac{c\sigma(s)}{\omega(s)}\dashint_{\Omega \cap B_{2\frac{\Lambda}{\lambda} s}}\abs{\bfA(\nabla{\bfu})}\dx
+\frac{c\sigma(s)+c}{\omega(s)}\bigg(\dashint_{\Omega\cap B_{2\frac{\Lambda}{\lambda} s}}\abs{\bfF-\bfF_0}^{p'q}\dx \bigg)^\frac1{p'q} 
\\ \nonumber
&\quad + \frac{c\delta}{\omega(\frac{\Lambda}{\lambda}s)}\bigg(\dashint_{\Omega\cap B_{\frac{\Lambda}{\lambda}s}}\abs{\bfA(\nabla{\bfu})-\bfA_{\frac{\Lambda}{\lambda}s} }^{\min{\set{2,p'}}}\dx\bigg)^\frac{1}{\min{\set{2,p'}}}
\end{align}
for some constant $c$ and for every $\bfF_0 \in \setR^{N\times n}$.
Hence, inequality \eqref{Nov11} follows, by fixing $\delta$ sufficiently small and then redefining $\theta$ and $c$ accordingly. The fact that \eqref{Nov11} actually holds for every $q>1$, and  not just for $q \in (1, q_0]$, is a consequence of H\"older's inequality.
\end{proof}

\section{Proof of Theorem ~\ref{thm:campanato}}

A main step in our proof of Theorem ~\ref{thm:campanato} is a pointwise estimate for a sharp maximal function of   the gradient of   the weak solution $\bfu$ to the Dirichlet problem \eqref{eq:sysA}.
%
The relevant sharp maximal function operator  has a local nature, and involves a parameter function $\omega$. Assume that $\Omega$ is a bounded  open set satisfying condition \eqref{eq:fat}. Let  $R>0$ and $q \in [1, \infty)$.  If $\bff$ is a real, vector or matrix-valued function in  $\Omega$ such that $\bff \in L^q(\Omega)$, we define the function $M^{\sharp,q}_{\omega,\Omega, R} \bff$ on $\setR^n$ as
\begin{equation}\label{Msharp}
(M^{\sharp,q}_{\omega,\Omega, R} \bff)(x)=\sup_{
\begin{tiny}
 \begin{array}{c}{
    B_r\ni x} \\
r\leq R
 \end{array}
  \end{tiny}
}\frac{1}{\omega(r)}\bigg(\dashint_{ \Omega\cap B_r} \abs{\bff-\mean{\bff}_{\Omega \cap B_r}}^q\dy\bigg)^\frac1q\, \quad \hbox{for $x \in \setR^n$.}
\end{equation}

\begin{proposition} \label{pro:wahnsinn}
Let $\Omega$ be a bounded open set in $\setR^n$ such that $\partial \Omega \in W^1\mathcal L^{\sigma (\cdot)} \cap C^{0,1}$   for some parameter function $\sigma$. Let $\omega$ be a parameter function fulfilling condition \eqref{eq:omega condition}. Assume that  $\bfF\in L^{p'q}(\Omega)$ for some $q>1$, and let 
$\bfu$ be the weak solution to the Dirichlet problem \eqref{eq:sysA}.  Then there exist positive constants  $\epsilon$, $R_0$ and $c$, depending on $n,p,N,\omega, q, \Omega$,
such
that, if
\begin{equation}\label{a7}
\sup_{r\in (0,1)}\frac{\sigma(r)}{\omega(r)}\int^1_r\frac{\omega(\rho)}{\rho}\,{\rm d}\rho \leq \epsilon,
\end{equation}
then
\begin{align}\label{a8}
M^{\sharp,\min\set{2, p'}}_{\omega,\Omega,{R/8}}(|\nabla \bfu|^{p-2}\nabla \bfu)(x)\leq cM^{\sharp,p'q}_{\omega,\Omega,2R}(\bfF)(x)+\frac{c}{\omega(R)}\dashint_{ \Omega \cap B_{2R}(x)}\abs{\nabla \bfu}^{p-1}\dy
\end{align}
for every $x \in \partial \Omega$ and $R\in (0,R_0)$.
\color{black}
Hence, 
\begin{align}\label{a15}
\norm{|\nabla \bfu|^{p-2}\nabla \bfu}_{\Lcal^{\omega (\cdot)}({\Omega\cap B_{R/16}}(x))}\leq c\norm{\bfF}_{\Lcal^{\omega (\cdot)}(\Omega \cap B_{2R}(x))}+\frac{c}{\omega(R)}\dashint_{\Omega \cap B_{2R}(x)}\abs{\nabla \bfu}^{p-1}\dy
\end{align}
for every $x \in \partial \Omega$ and $R\in (0,R_0)$.
\end{proposition}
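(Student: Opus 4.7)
The plan is to iterate the boundary decay estimate of Proposition~\ref{pro:boundarydecay} along a geometric sequence of radii centered at a boundary point, using the smallness condition~\eqref{a7} to close the iteration, and then to propagate the resulting pointwise bound to the full Campanato seminorm on $\Omega\cap B_{R/16}(x)$ by matching interior balls to nearby boundary balls and invoking Proposition~\ref{prop:decay conclusion} on the interior side. Fix $x\in\partial\Omega$, $R\in(0,R_0)$, set $m=\min\{2,p'\}$, abbreviate $E=M^{\sharp,p'q}_{\omega,\Omega,2R}(\bfF)(x)$, and define for $s\in(0,R]$
\begin{align*}
\Phi(s) &= \frac{1}{\omega(s)}\bigg(\dashint_{\Omega\cap B_s(x)}|\bfA(\nabla\bfu)-\bfA_s|^m\dy\bigg)^{1/m},\\
\Theta(s) &= \frac{1}{\omega(s)}\dashint_{\Omega\cap B_s(x)}|\nabla\bfu|^{p-1}\dy.
\end{align*}
Taking $\bfF_0=\mean{\bfF}_{\Omega\cap B_s(x)}$ in~\eqref{Nov11} turns Proposition~\ref{pro:boundarydecay} into the scalar recursion $\Phi(\theta s)\le\tfrac{1}{2}\Phi(s)+c\sigma(s)\Theta(s)+cE$ for every $s\in(0,R]$.

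The main difficulty is that $\Theta$ does not itself decay with the scale, so this recursion is not self-closing in $\Phi$ alone. To bypass this I would telescope the matrices $\bfA_{s_k}$ along the geometric sequence $s_k=\theta^k R$: Jensen's inequality applied to the $n$-th column structure of $\bfA_s$ yields $|\bfA_{s_{k+1}}-\bfA_{s_k}|\le c\,\omega(s_k)\Phi(s_k)$, whence $\omega(s_k)\Theta(s_k)\le|\bfA_R|+c\sum_{j=0}^{k}\omega(s_j)\Phi(s_j)$, with $|\bfA_R|\le c\dashint_{\Omega\cap B_{2R}(x)}|\nabla\bfu|^{p-1}\dy$. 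Plugging this back into the iterated recursion for $a_k:=\Phi(s_k)$ produces
\[
a_k\le 2^{-k}a_0+C\sum_{j=0}^{k-1}2^{-(k-1-j)}\frac{\sigma(s_j)}{\omega(s_j)}\bigg(|\bfA_R|+\sum_{i=0}^{j}\omega(s_i)\,a_i\bigg)+CE.
\]
The crucial step is to use the integral form of~\eqref{a7}: because $\omega$ is non-decreasing, $\sum_{i=1}^{j}\omega(s_i)$ is comparable to $(\log(1/\theta))^{-1}\int_{s_j}^{1}\omega(\rho)/\rho\,\mathrm{d}\rho$, and \eqref{a7} then yields $\tfrac{\sigma(s_j)}{\omega(s_j)}\sum_{i=1}^{j}\omega(s_i)\le\epsilon/\log(1/\theta)$. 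Choosing $\epsilon$ small absorbs the $\sup_k a_k$ contribution of the double sum into the left-hand side, giving $\sup_k a_k\le c(E+\Theta(2R))$. The most delicate bookkeeping is the $i=0$ boundary term, which carries a non-scaling factor $\omega(R)/\omega(s_j)$ and is neutralized by further shrinking $R_0$ so that $\sigma(R_0)$ is small; this is the main obstacle of the argument.

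Extending the estimate from the discrete sequence $\{s_k\}$ to all $s\in(0,R/8]$ is routine: sandwich $s$ between two consecutive $s_k$ and pay a uniform price via~\eqref{eq:omega condition}, proving~\eqref{a8}. For the Campanato seminorm in~\eqref{a15}, fix any ball $B_r(y)$ with $y\in\Omega\cap B_{R/16}(x)$ and $r\le R/16$: if $B_{2r}(y)\subset\Omega$ the identical iteration based on the interior decay estimate~\eqref{june43} of Proposition~\ref{prop:decay conclusion} (which carries no $\sigma$-term) furnishes the oscillation bound, while otherwise one picks $z\in\partial\Omega$ with $|y-z|\le 2r$, embeds $B_r(y)$ into $B_{3r}(z)$, and invokes~\eqref{a8} at $z$. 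Taking the supremum over $y$ and $r\le R/16$, and disposing of radii $r>R/16$ by the trivial bound $\omega(r)\gtrsim\omega(R)$ combined with $|\bfA(\nabla\bfu)|=|\nabla\bfu|^{p-1}$, concludes~\eqref{a15}.
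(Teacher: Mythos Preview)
Your proposal is correct and follows essentially the same route as the paper: both iterate Proposition~\ref{pro:boundarydecay}, control the non-oscillatory term $\dashint_{\Omega\cap B_s}|\bfA(\nabla\bfu)|$ by telescoping oscillations across scales, close via the smallness assumption~\eqref{a7}, and then match interior and boundary balls exactly as you describe. The only organizational difference is that the paper packages the telescope into a standalone Lemma~\ref{lem:iter}, which yields the factor $\tfrac{\sigma(s)\overline\omega(s)}{\omega(s)}$ in a single step and lets one take a supremum and absorb immediately (so the double sum and the $i=0$ bookkeeping you flag never arise); your inline telescope of the $\bfA_{s_k}$ reaches the same conclusion with a little more arithmetic, and the $i=0$ term is in fact harmless since $a_0\le \tfrac{c}{\omega(R)}\dashint_{\Omega\cap B_{2R}}|\nabla\bfu|^{p-1}$ already sits on the right-hand side.
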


The following lemma will be needed in the proof of Proposition \ref{pro:wahnsinn}.
\begin{lemma}
\label{lem:iter} Let $\Omega$ be a bounded Lipschitz domain in $\setR^n$. Let $R\in (0,1)$ be such that, for every $x \in \partial \Omega$ there exists a map $\Psi :  \Omega \cap B_R(x) \to \setR^n$ as in \eqref{PSI}, with $\Psi \in C^{0,1}(\Omega \cap B_R(x))$.
Assume that $q\in [1,\infty)$ and $\omega$ is a parameter function satisfying condition \eqref{eq:omega condition}. Let $\overline \omega : (0, 1) \to [0, \infty)$ be the function defined as 
\begin{equation}\label{omegabar}
\overline{\omega}(r)=\int^1_r\frac{\omega(\rho)}{\rho}\,{\rm d}\rho \quad \hbox{for $r\in (0, 1]$.}
\end{equation}
Then there exists a constant $c=c(n,\beta,c_\omega,\lambda,\Lambda,q)$ such that
\begin{equation}\label{Nov20}
\dashint_{\Omega\cap B_r(x)}\abs{\bff}\dy\leq c\overline{\omega}(r)\sup_{\rho\in(r,R)}\frac{1}{\omega(\rho)}\bigg(\dashint_{ \Omega\cap B_\rho(x)} \abs{\bff-\mean{\bff}_{ \Omega \cap B_\rho}}^{q}\dy\bigg)^\frac{1}{q} + c\mean{\abs{\bff}}_{\Omega \cap B_R(x)} 
\end{equation}
for every 
$\bff \in L^q(\Omega)$, $x\in \partial \Omega$ and  $r \in (0,R]$.
\end{lemma}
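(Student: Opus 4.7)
The plan is to proceed by a dyadic telescoping argument comparing $\mean{\bff}_{\Omega \cap B_r(x)}$ with $\mean{\bff}_{\Omega\cap B_R(x)}$. First I would dispose of the range $r \geq R/2$: in this case the Lipschitz property of $\partial\Omega$ at $x$ (via the map $\Psi$ and the constants $\lambda, \Lambda$ in \eqref{lL}) yields the fatness estimate $|\Omega \cap B_r(x)| \geq c|B_r| \geq c'|\Omega \cap B_R(x)|$, whence $\dashint_{\Omega \cap B_r(x)} |\bff|\,\dy \leq c\mean{|\bff|}_{\Omega\cap B_R(x)}$ and \eqref{Nov20} is trivial. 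So it remains to consider $r < R/2$.

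For $r < R/2$ I set $r_k = 2^{-k}R$ for $k = 0, 1, \dots, K$, with $K$ chosen so that $r_{K+1} \leq r < r_K$; denote by $S$ the supremum on the right-hand side of \eqref{Nov20}. The same fatness bound $|\Omega \cap B_{r_k}(x)| \geq c|B_{r_k}|$ makes the ratio $|\Omega \cap B_{r_k}(x)|/|\Omega \cap B_{r_{k+1}}(x)|$ uniformly bounded in $k$. Applying H\"older's inequality and the definition of $S$, I would estimate
\begin{align*}
|\mean{\bff}_{\Omega \cap B_{r_{k+1}}(x)} - \mean{\bff}_{\Omega \cap B_{r_k}(x)}|
&\leq \dashint_{\Omega \cap B_{r_{k+1}}(x)} |\bff - \mean{\bff}_{\Omega \cap B_{r_k}(x)}|\,\dy \\
&\leq c\bigg(\dashint_{\Omega \cap B_{r_k}(x)} |\bff - \mean{\bff}_{\Omega \cap B_{r_k}(x)}|^q\,\dy\bigg)^{1/q} \leq c\,\omega(r_k)\,S.
\end{align*}
Telescoping and then comparing the dyadic sum with the integral -- which is standard since $\omega$ is non-decreasing, so that $\omega(r_k) \leq c\int_{r_{k+1}}^{r_k}\omega(\rho)/\rho\,{\rm d}\rho$ -- yields
\begin{equation*}
|\mean{\bff}_{\Omega \cap B_r(x)} - \mean{\bff}_{\Omega \cap B_R(x)}| \leq c\sum_{k=0}^K \omega(r_k)\,S \leq c'\,\overline{\omega}(r)\,S.
\end{equation*}

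The conclusion then follows from the triangle inequality
\begin{equation*}
\dashint_{\Omega \cap B_r(x)} |\bff|\,\dy \leq \dashint_{\Omega \cap B_r(x)} |\bff - \mean{\bff}_{\Omega \cap B_r(x)}|\,\dy + |\mean{\bff}_{\Omega \cap B_r(x)}|,
\end{equation*}
by bounding the first term by $\omega(r)\,S \leq c\,\overline{\omega}(r)\,S$ (this absorption is legitimate since $\overline{\omega}(r) \geq \omega(r)\int_r^{\min(2r,1)}{\rm d}\rho/\rho \geq c\omega(r)$ by monotonicity, for $r$ small; for $r$ close to $1$ the term is harmless) and the second by $\mean{|\bff|}_{\Omega \cap B_R(x)} + c\,\overline{\omega}(r)\,S$ via the telescoping step above. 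The only technical subtlety is that all constants must depend on $n, q, c_\omega, \beta, \lambda, \Lambda$ but not on the boundary point $x$; this uniformity follows from the assumption that the flattening map $\Psi$ at each $x \in \partial\Omega$ is Lipschitz with constants controlled by $\lambda$ and $\Lambda$, and is crucial for the subsequent application of the lemma in Proposition~\ref{pro:wahnsinn}.
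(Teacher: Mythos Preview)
Your argument is correct and in fact more direct than the paper's. Both proofs rest on the same dyadic telescoping of means and the comparison $\sum_k \omega(r_k) \lesssim \overline{\omega}(r)$, but the routes differ in how the geometry near $\partial\Omega$ is handled. The paper first passes through the flattening map~$\Psi$: it changes variables to half-balls $B_\rho^+$, proves the telescoping inequality there (their inequality~\eqref{logiter}, valid for balls and half-balls alike), and then pulls the estimate back to $\Omega\cap B_\rho$ via the comparison~\eqref{HtoO}. You instead work directly on the sets $\Omega\cap B_{r_k}(x)$, using only the uniform fatness $|\Omega\cap B_{r_k}(x)|\geq c|B_{r_k}|$ to control the ratio of measures at consecutive scales. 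This bypasses the change of variables entirely and is cleaner; the paper's detour through half-balls is presumably motivated by consistency with the surrounding arguments in Section~\ref{sec:nonflat}, where the flattening is genuinely needed.

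One small correction: your justification ``since $\omega$ is non-decreasing'' for $\omega(r_k)\leq c\int_{r_{k+1}}^{r_k}\omega(\rho)/\rho\,{\rm d}\rho$ is not quite complete. Monotonicity of $\omega$ gives $\int_{r_{k+1}}^{r_k}\omega(\rho)/\rho\,{\rm d}\rho \geq \omega(r_{k+1})\log 2$, and to get $\omega(r_k)$ you need $\omega(r_k)\leq c\,\omega(r_{k+1})$, which follows from condition~\eqref{eq:omega condition} with $\theta=1/2$. Alternatively, shift the comparison interval to $[r_k,r_{k-1}]$; then monotonicity alone suffices for $k\geq 1$, and the $k=0$ term is a single bounded quantity. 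Either way the dependence on $c_\omega,\beta$ enters, consistent with your statement of the constants.
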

\begin{proof}
Let us keep the notations of 
Subsection \ref{sec:boundary} in force.  Moreover, we can assume that all balls are centered at $0$. For simplicity,  the center will thus be dropped in the notation. 
Owing to properties \eqref{meanq} and \eqref{inclusion2}, a change of variables and the fact that $|\det \bfJ|=1$, we have that
%
\begin{align}
\label{HtoO}
\bigg(\dashint_{B_{ \lambda r}^+}\abs{\widetilde{\bff}-\mean{\widetilde{\bff}}_{B_{ \lambda r}}}^q \dy\bigg)^\frac1q
\leq c \bigg(\dashint_{B_{ \lambda r}}\abs{\widetilde{\bff}-\mean{\widetilde{\bff}}_{B_{ \lambda r}}}^q \dy\bigg)^\frac1q
\leq c'
 \bigg(\dashint_{\Omega \cap B_{r}}\abs{\bff-\mean{\bff}_{\Omega\cap B_r}}^q\dx\bigg)^\frac1q.
\end{align}
for some constants $c$ and $c'$.
%
Next, let $B_R$ be any ball in $\setR^n$ and let   $\bfg \in L^q(B_R)$.  We claim that 
\begin{align}
\label{logiter}
\abs{\mean{\abs{\bfg}}_{ B_r}-\mean{\abs{\bfg}}_{B_R}}
\leq 2^{2n+2}\int^R_r \bigg(\dashint_{B_\rho}\abs{\bfg-\mean{\bfg}_{ B_\rho}}^q\, dx\bigg)^\frac1q\frac{d\rho}\rho
\quad \hbox{for $r \in (0, R]$,}
\end{align}
and that  inequality \eqref{logiter} contiunes to hold if balls are replaced by half-balls.
 To prove our claim, observe that
\begin{align*}
  \big|\mean{\bfg}_{\frac 12 B_R}-\mean{\bfg}_{B_R}\big|\leq
  \dashint_{\frac 12 B_R}\abs{\bfg-\mean{\bfg}_{B_R}}\,\dx \leq  2^n\dashint_{{B_R}}\abs{\bfg-\mean{\bfg}_{B_R}}\, \dx.
\end{align*}
Let $m \in \setN$. By iterating the previous inequality, we obtain that
\begin{align}
  \label{eq:iteration}
  \abs{\mean{\bfg}_{2^{-m}B_R}-\mean{\bfg}_{B_R}}&\leq 2^n\sum_{i=0}^{m-1}
    \dashint_{2^{-i}B_R}\abs{\bfg-\mean{\bfg}_{2^{-i}B_R}}\dx.
\end{align}
Given any $\tau\in (2^{-m},2^{1-m}]$,  property \eqref{meanq} ensures that
\[
\dashint_{2^{-m}B_R}\abs{\bfg-\mean{\bfg}_{2^{-m}B_R}}\dx\leq  {2^{n+1}}\dashint_{ \tau B_R}\abs{\bfg-\mean{\bfg}_{\tau B_R}}\dx.
\]
Consequently, 
\begin{align}\label{a4}
\dashint_{2^{-m}B_R}\abs{\bfg-\mean{\bfg}_{2^{-m}B_R}}\dx & \leq 2^{n+1}\dashint_{2^{-m}}^{2^{1-m}}\dashint_{ \tau B_R}\abs{\bfg-\mean{\bfg}_{\tau B_R}}\dx\,{\rm d}\tau
\\ \nonumber & \leq 2^{n+1}\int_{2^{-m}}^{2^{1-m}}\dashint_{ \tau B_R}\abs{\bfg-\mean{\bfg}_{\tau B_R}}\dx\frac{{\rm d}\tau}{\tau}.
\end{align}
Moreover,
\begin{align}\label{a5}
\abs{\mean{\bfg}_{ \tau B_R}-\mean{\bfg}_{2^{1-m}B_R}}
\leq \dashint _{\tau B_R} |\bfg- \mean{\bfg}_{2^{1-m}B_R}|\, \dx \leq 2^n \dashint _{2^{1-m} B_R} |\bfg- \mean{\bfg}_{2^{1-m}B_R}|\, \dx\,.
\end{align}
Given $\theta \in (0, 1)$, choose $m \in \mathbb N$ \color{black} in such a way that $\theta\in (2^{-m},2^{1-m}]$. Then, we deduce from \eqref{a5}, \eqref{eq:iteration} and \eqref{a4} that
\begin{align}\label{a6}
\abs{\mean{\bfg}_{ \theta B_R}-\mean{\bfg}_{B_R}}
&
\leq\abs{\mean{\bfg}_{ \theta B_R}-\mean{\bfg}_{2^{1-m}B_R}}+\abs{\mean{\bfg}_{ 2^{1-m} B_R}-\mean{\bfg}_{B_R}}
\\ \nonumber
&
\leq 2^n\dashint_{{2^{1-m}B_R}}\abs{\bfg-\mean{\bfg}_{2^{1-m}B_R}}\dx+2^n\sum_{i=0}^{m-2}   \dashint_{2^{-i}B_R}\abs{\bfg-\mean{\bfg}_{2^{-i}B_R}}\dx
\\  \nonumber
&\leq 2^n\sum_{i=0}^{m-1}   \dashint_{2^{-i}B_R}\abs{\bfg-\mean{\bfg}_{2^{-i}B_R}}\dx
\leq  2^{2n+1}\int_{2^{1-m}}^1\dashint_{ \tau B_R}\abs{\bfg-\mean{\bfg}_{\tau B_R}}\dx\frac{{\rm d}\tau}{\tau}
\\   \nonumber
&\leq 2^{2n+1}\int_{\theta}^1\dashint_{ \tau B_R}\abs{\bfg-\mean{\bfg}_{\tau B_R}}\dx\frac{{\rm d}\tau}{\tau}
\end{align}
Property \eqref{meanq} ensures that
\begin{align}
\label{eq:abs}
 \dashint_{\tau B_R}\abs{\abs{\bfg}-\mean{\abs{\bfg}}_{\tau B_R}}\dx\leq 2\dashint_{\tau B_R}\abs{\abs{\bfg}-\abs{\mean{\bfg}_{\tau B_R}}}\dx \leq 2\dashint_{\tau B_R}\abs{\bfg-\mean{\bfg}_{\tau B_R}}\dx 
\end{align}
for $\tau \in (0,1]$.
Inequality \eqref{a6} (applied with $\bfg$ replaced by $|\bfg|$),  inequality \eqref{eq:abs} and H\"older's inequality imply that
\begin{align}\label{logiter1}
& \abs{\mean{\abs{\bfg}}_{ \theta B_R}-\mean{\abs{\bfg}}_{B_R}} \leq 2^{2n+1}\int^1_\theta \dashint_{\tau B_R}\abs{|\bfg|-\mean{|\bfg|}_{\tau B_R}}\, \dx\frac{{\rm d}\tau}\tau
\\ \nonumber
& \quad \quad 
\leq 2^{2n+2}\int^1_\theta \dashint_{\tau B_R}\abs{\bfg-\mean{\bfg}_{\tau B_R}}\, \dx\frac{{\rm d}\tau}\tau
\leq 2^{2n+2}\int^1_\theta \bigg(\dashint_{\tau B_R}\abs{\bfg-\mean{\bfg}_{\tau B_R}}^q\, \dx\bigg)^{\frac 1q}\frac{{\rm d}\tau}\tau 
\end{align}
for every $q \in [1,\infty)$.
Given any $r\in (0, {R}]$, the choice $\theta = \frac rR$ in \eqref{logiter1} and a change of variables in the last integral yield \eqref{logiter}.
\\ Having inequality \eqref{logiter} at disposal, we are ready to  accomplish the proof of inequality \eqref{Nov20}. 
If $r\in [\frac{\lambda R}{\Lambda},R]$, then inequality  \eqref{Nov20}  follows by enlarging the domain of integration.
Assume, next, that $r\in [0,\frac{\lambda R\color{black}}{\Lambda}]$. From   a change of variables and inequality
\eqref{logiter}
 in its version for half-balls, we deduce  that  there exist constants $c$ and $c'$ such that
\begin{align}\label{marzo4}
\dashint_{B_{ r}\cap \Omega }\abs{\bff}\dx
&\leq c\dashint_{B_{\Lambda r}^+}\abs{\widetilde \bff\color{black}}\dy
\leq c\abs{\mean{\abs{\widetilde  \bff}}_{B_{\Lambda r}^+}-\mean{\abs{\widetilde  \bff}}_{B_{{\lambda R}}^+}}+c\mean{\abs{\widetilde  \bff}}_{B_{\lambda R}^+}
\\ \nonumber
&\leq c'\int_{\Lambda r}^{\lambda R}\frac{\omega(\rho)}{\rho}\,{\rm d}\rho \sup_{\tau\in({ {\Lambda}r, {\lambda}R})}\frac{1}{\omega(\tau)} \bigg(\dashint_{B_\tau^+} \abs{\widetilde  \bff-\mean{\widetilde  \bff}_{B_\tau^+}}^q\dy\bigg)^\frac{1}{q} 
+ c\mean{\abs{\widetilde  \bff}}_{B_{\lambda R}^+}
\\ \nonumber
&\leq c'\, \overline{\omega}(r)\sup_{\tau\in({ {\Lambda}r,\lambda R)}}\frac{1}{\omega(\tau)}\bigg(\dashint_{B_\tau^+} \abs{\widetilde  \bff-\mean{\widetilde  \bff}_{B_\tau^+}}^{q}\dy\bigg)^\frac{1}{q} 
+ c\mean{\abs{\widetilde  \bff}}_{B_{\lambda R}^+}.
\end{align} 
Note that in the last inequality we have made use of the fact that $\overline \omega (r) \geq \int_{\Lambda r}^{\lambda R}\frac{\omega(\rho)d\rho}{\rho}$, thanks to assumption \eqref{lL}.
Owing to inequalities \eqref{marzo4},  \eqref{HtoO}, \eqref{inclusion1} and to condition \eqref{eq:omega condition},  there exist constants $c$ and $c'$ such that
\begin{align*}
\dashint_{\Omega \cap B_{ r}}\abs{\bff}\dx
&\leq c\, \overline{\omega}(r)\sup_{\rho\in({ {\Lambda}r,\lambda R)}}\frac{1}{\omega(\rho)}\bigg(\dashint_{ \Omega \cap B_{\rho/ {\lambda}}} \abs{ \bff-\mean{ \bff}_{\Omega \cap B_{\rho/{\lambda}}}}^{q}\dx\bigg)^\frac{1}{q} 
+ c\mean{\abs{\bff}}_{\Omega\cap B_{R}}
\\ & = 
c\, \overline{\omega}(r)\sup_{\rho\in({{\frac \Lambda\lambda }r,  R)}}\frac{1}{\omega(\lambda \rho)}\bigg(\dashint_{ \Omega \cap B_{{\rho}}} \abs{\bff-\mean{\bff}_{ \Omega \cap B_{\rho}}}^{q}\dx\bigg)^\frac{1}{q} 
+ c\mean{\abs{\bff}}_{\Omega \cap B_{R}}
\\ & \leq 
c'\, \overline{\omega}(r)\sup_{\rho\in({r, R)}}\frac{1}{\omega(\rho)}\bigg(\dashint_{\Omega\cap B_{\rho}}  \abs{\bff-\mean{\bff}_{\Omega \cap B_{\rho}}}^{q}\dx\bigg)^\frac{1}{q} 
+ c\mean{\abs{\bff}}_{\Omega\cap B_{R}},
\end{align*} 
namely \eqref{Nov20}.
\end{proof}


\begin{proof}[Proof of Proposition \ref{pro:wahnsinn}]
We keep the notations of the previous sections in force. 
Let $R_0$ denote the minimum among $1$ and the radii $R$ for which the assumptions of Proposition \ref{pro:boundarydecay} are fulfilled. Observe that $R_0>0$, since $\partial \Omega$ is compact.
Our first aim is to estimate  the quantity 
\[
\sup_{s\in (0,R)} \frac{1}{\omega(s)}\bigg(\dashint_{ \Omega\cap B_{s}(x)}\abs{\bfA(\nabla\bfu)-\bfA_{s} }^{\min{\set{2,p'}}}
\dx\bigg)^{\frac{1}{\min{\set{2,p'}}}},
\] 
where $R\in (0, R_0)$, and $\bfA_s$ is defined as in \eqref{a9}. Let $\theta \in (0,1)$ be the parameter appearing in the statement of Proposition~\ref{pro:boundarydecay}. 
First, assume that $s\in [\frac{\theta R}{2},R]$. By enlarging the domain of integration,  and exploiting condition \eqref{eq:omega condition},   triangle inequality, \color{black}
inequality \eqref{Nov3}
(applied with $\bfF_0=\mean{\bfF}_{ \Omega\cap B_{2R}}$ {and $q=1$}) and H\"older's inequality, we infer that
\begin{align}
\label{eq:bigballs}
\frac{1}{\omega(s)}&\bigg(\dashint_{ \Omega \cap B_{s}(x)}\abs{\bfA(\nabla\bfu)-\bfA_{s}}^{\min{\set{2,p'}}}
\dy\bigg)^{\frac{1}{\min{\set{2,p'}}}}
\\ \nonumber
&
\leq \frac{c}{\omega(R)}\bigg(\dashint_{\Omega \cap  B_{R}(x)}\abs{\bfA(\nabla\bfu)}^{\min\set{2,p'}}
\dy\bigg)^{\frac{1}{\min{\set{2,p'}}}}
\\ \nonumber
&
\leq \frac{c'}{\omega(R)}\dashint_{ \Omega \cap B_{2R}(x)}\abs{\bfA(\nabla\bfu)}\dy + c'M^{\sharp,p'q}_{\omega,\Omega, {2}R}(\bfF)(x)
\end{align}
for some constants $c$ and $c'$.
Hence,
\begin{align}\label{a11}
\sup _{s \in (\frac{\theta R}2, R)}\frac{1}{\omega(s)}&\bigg(\dashint_{\Omega \cap  B_{s}(x)}\abs{\bfA(\nabla\bfu)-\bfA_{s}}^{\min{\set{2,p'}}}
\dy\bigg)^{\frac{1}{\min{\set{2,p'}}}}
\\ \nonumber
&\leq \frac{c'}{\omega(R)}\dashint_{\Omega \cap  B_{2R}(x)}\abs{\bfA(\nabla\bfu)}\dy + c'M^{\sharp,p'q}_{\omega,\Omega, {2}R}(\bfF)(x).
\end{align}
Assume next that $s\in (0, \frac{\theta R}{2})$.  By Proposition~\ref{pro:boundarydecay}, there exists a costant $c$ such that
\begin{align}\label{eq:0607}
&\frac{1}{\omega(\theta s)}\bigg(\dashint_{\Omega \cap  B_{{\theta}s}(x)}\abs{\bfA(\nabla\bfu)-\bfA_{\theta s}}^{\min{\set{2,p'}}}
\dy\bigg)^{\frac{1}{\min{\set{2,p'}}}}
\\ \nonumber
&\quad \leq
\frac{c\sigma(s)}{\omega(s)}\dashint_{\Omega \cap B_{s}(x)}\abs{\bfA(\nabla{\bfu})}\dy
+\frac{c\sigma(s)+c}{\omega(s)}\bigg(\dashint_{\Omega \cap B_{s}(x)}\abs{\bfF-\bfF_0}^{p'q}\dy \bigg)^\frac1{p'q} 
\\ \nonumber
&\qquad + \frac{1}{2\omega(s)}\bigg(\dashint_{\Omega \cap B_s(x)}\abs{\bfA(\nabla{\bfu})-\bfA_{s}}^{\min{\set{2,p'}}}\dy\bigg)^\frac{1}{\min{\set{2,p'}}}.
\end{align}
Hence, via Lemma~\ref{lem:iter},   there exists a constant $c$ such that 
\begin{align}
\label{forwahnsinn1}
&\frac{1}{\omega(\theta s)}\bigg(\dashint_{ \Omega \cap B_{\theta s}(x)}\abs{\bfA(\nabla\bfu)-\bfA_{\theta s}}^{\min{\set{2,p'}}}
\dy\bigg)^{\frac{1}{\min{\set{2,p'}}}}
\\ \nonumber
&\quad \leq
\frac{c\sigma(s)\overline{\omega}(s)}{\omega(s)}\sup_{\rho\in (s,R)}\frac{1}{\omega(\rho)}\bigg(\dashint_{\Omega \cap B_{\rho}(x)}\abs{\bfA(\nabla{\bfu})-\bfA_{\rho}}^{\min{\set{2,p'}}}\dy\bigg)^\frac{1}{\min{\set{2,p'}}}
\\ \nonumber
&\qquad +\frac{c\sigma(s)}{\omega(s)}\dashint_{ \Omega \cap B_R(x)}\abs{\bfA(\nabla \bfu)}\, \dy
+\frac{c\sigma(s)+c}{\omega(s)}\bigg(\dashint_{\Omega \cap B_{s}(x)}\abs{\bfF-\bfF_0}^{p'q}\dy \bigg)^\frac1{p'q} 
\\ \nonumber
&\qquad + \frac{1}{2\omega(s)}\bigg(\dashint_{ \Omega \cap B_{s}(x)}\abs{\bfA(\nabla{\bfu})-\bfA_{s} }^{\min{\set{2,p'}}}\dy\bigg)^\frac{1}{\min{\set{2,p'}}}.
\end{align}
Let $c$ be the constant appearing in inequality \eqref{forwahnsinn1}. Let us choose $\epsilon$ so small  in condition \eqref{a7}  that
%
\begin{equation}\label{Nov23}
\sup _{s \in (0,1)}\frac{c\,\sigma(s)\overline{\omega}(s)}{\omega(s)}\leq \frac{1}{4}.
\end{equation}
By \eqref{eq:omega condition},  
\begin{equation}\label{Nov24}
\overline{\omega}(s)\geq \int^R_\frac{R}{2}\frac{\omega(\rho)}{\rho}\,{\rm \rho} s\geq\frac12 \dashint^R_\frac{R}{2}{\omega(\rho)\,{\rm d}\rho}\geq c\,\omega(R) \quad \hbox{if $s \in (0, \frac R2)$,}
\end{equation}
for some positive constant $c$.
By inequalities \eqref{Nov23} and \eqref{Nov24}, there exists a constant $c$ such that
\begin{equation}\label{Nov25}
\frac{\sigma(s)}{\omega(s)}\leq
\frac{c}{\omega(R)} \quad \hbox{if $s \in (0, \frac R2)$.}
\end{equation}
From inequality \eqref{forwahnsinn1} with $\bfF_0=\mean{\bfF}_{ \Omega \cap B_s(x)}$,  inequalities \eqref{Nov23} and \eqref{Nov25}, and the boundedness of $\sigma$, one deduces that
\begin{align}\label{a10}
&\frac{1}{\omega(\theta s)}\bigg(\dashint_{ \Omega \cap B_{\theta s}(x)}\abs{\bfA(\nabla\bfu)-\bfA_{\theta s} }^{\min{\set{2,p'}}}
\dy\bigg)^{\frac{1}{\min{\set{2,p'}}}}
\\ \nonumber
&\quad \leq
\frac{3}{4}\sup_{\rho\in (s,R)}\frac{1}{\omega(\rho)}\bigg(\dashint_{ \Omega \cap B_{\rho}(x)}\abs{\bfA(\nabla{\bfu})-\bfA_{\rho} }^{\min{\set{2,p'}}}\dy\bigg)^\frac{1}{\min{\set{2,p'}}}
\\ \nonumber
&+\frac{c}{\omega(R)}\dashint_{\Omega \cap B_R(x)}\abs{\bfA(\nabla \bfu)}\, \dy
+\frac{c}{\omega(s)}\bigg(\dashint_{\Omega \cap B_{s}(x)}\abs{\bfF-\mean{\bfF}_{ \Omega \cap B_s}}^{p'q}\dy \bigg)^\frac1{p'q}  
\end{align}
for some constant $c$, provided that $s \in (0, \frac R2)$.
\color{black}
Inasmuch as $\theta s \in (0, \tfrac{\theta R}2)$ if and only if $s\in (0, \tfrac R2)$, inequality \eqref{a10} implies that
\begin{align}\label{a13}
&\sup _{s \in (0, \frac{\theta R}2)}\frac{1}{\omega(s)}\bigg(\dashint_{\Omega \cap  B_{s}(x)}\abs{\bfA(\nabla\bfu)-\bfA_{s} }^{\min{\set{2,p'}}}
\dy\bigg)^{\frac{1}{\min{\set{2,p'}}}}
\\ \nonumber
&\quad \leq
\frac{3}{4}\sup_{\rho\in (0, R)}\frac{1}{\omega(\rho)}\bigg(\dashint_{ \Omega \cap B_{\rho}(x)}\abs{\bfA(\nabla{\bfu})-\bfA_{\rho} }^{\min{\set{2,p'}}}\dy\bigg)^\frac{1}{\min{\set{2,p'}}}
\\ \nonumber
&\quad  \quad  +\frac{c}{\omega(R)}\dashint_{\Omega \cap B_R(x)}\abs{\bfA(\nabla \bfu)}\, \dy
+\sup _{s \in (0, \frac{\theta R}2)}\frac{c}{\omega(\frac s \theta)}\bigg(\dashint_{\Omega \cap B_{\frac s \theta}(x)}\abs{\bfF-\mean{\bfF}_{\Omega \cap B_{\frac s \theta}}}^{p'q}\dy \bigg)^\frac1{p'q}.
\end{align}
Here, we have made use of the fact that $\sup _{s \in (0, \frac{\theta R}2)} \sup_{\rho\in (\frac s \theta, R)}{(\cdot)} = \sup_{\rho\in (0, R)}{(\cdot)}$.  Since the last term on the right-hand side of \eqref{a13} does not exceed  $M^{\sharp,p'q}_{\omega,\Omega,{2}R}(\bfF)(x)$ times a suitable constant, 
coupling inequalities \eqref{a11} and \eqref{a13} yields
\begin{align}\label{a14}
\sup _{s \in (0, R)}\frac{1}{\omega(s)}&\bigg(\dashint_{ \Omega \cap B_{s}(x)}\abs{\bfA(\nabla\bfu)-\bfA_{s} }^{\min{\set{2,p'}}}
\dy\bigg)^{\frac{1}{\min{\set{2,p'}}}}
\\ \nonumber
&\quad \leq
\frac{3}{4}\sup_{\rho\in (0, R)}\frac{1}{\omega(\rho)}\bigg(\dashint_{\Omega \cap B_{\rho}(x)}\abs{\bfA(\nabla{\bfu})-\bfA_{\rho} }^{\min{\set{2,p'}}}\dy\bigg)^\frac{1}{\min{\set{2,p'}}}
\\ \nonumber
&\qquad
+\frac{c}{\omega(R)}\dashint_{\Omega \cap B_R(x)}\abs{\bfA(\nabla \bfu)}\, \dy
+ cM^{\sharp,p'q}_{\omega,\Omega,{2}R}(\bfF)(x)
\end{align}
for some constant $c$.
Absorbing the first term on the right-hand side of inequality \eqref{a14} in the left-hand side tells us that
\begin{align}\label{marzo6}
\sup _{s \in (0, R)}\frac{1}{\omega(s)}&\bigg(\dashint_{ \Omega \cap B_{s}(x)}\abs{\bfA(\nabla\bfu)-\bfA_{s} }^{\min{\set{2,p'}}}
\dy\bigg)^{\frac{1}{\min{\set{2,p'}}}}
\\ \nonumber
&\qquad
\leq \frac{4c}{\omega(R)}\dashint_{\Omega \cap B_R(x)}\abs{\bfA(\nabla \bfu)}\, \dy
+ 4cM^{\sharp,p'q}_{\omega,\Omega, {2}R}(\bfF)(x).
\end{align}
Now, let $y \in B_{R/8}(x)$, and let $r\leq R/8$. Set $s_y= {\rm dist}(y, \partial \Omega)$. Assume first that $r < s_y/2$, whence $B_{2r}(y) \subset \Omega$. By the local inner estimate of \cite[Theorem 1.3 and Remark 1.4]{BCDKS}, there exists a constant $c$ such that
\begin{align}\label{marzo7}
\frac{1}{\omega(r)}&\bigg(\dashint_{B_{r}(y)}\abs{\bfA(\nabla\bfu)-\langle\bfA(\nabla\bfu)\rangle_{B_{r}(y)}}^{\min{\set{2,p'}}}
\dz\bigg)^{\frac{1}{\min{\set{2,p'}}}} \\ \nonumber & \leq 
\frac{c}{\omega(r)}\dashint_{ B_{2r}(y)}\abs{\bfA(\nabla \bfu)}\, \dz
+ cM^{\sharp,p'}_{\omega,\Omega,r}(\bfF)(y).
\end{align}
Since $B_{kr}(y) \subset B_{kR/8}(y) \subset B_{kR/4}(x)$ for $k>0$,
we infer from inequality \eqref{marzo7} and condition \eqref{eq:omega condition} that
\begin{align}\label{marzo8}
\frac{1}{\omega(r)}&\bigg(\dashint_{\Omega \cap B_{r}(y)}\abs{\bfA(\nabla\bfu)-\langle\bfA(\nabla\bfu)\rangle_{\Omega \cap B_{r}(y)} }^{\min{\set{2,p'}}}
\dz\bigg)^{\frac{1}{\min{\set{2,p'}}}} \\ \nonumber & \leq 
\frac{c}{\omega(R)}\dashint_{ B_{R/2}(x)}\abs{\bfA(\nabla \bfu)}\, \dz
+ cM^{\sharp,p'}_{\omega,\Omega,R/4}(\bfF)(x)
\end{align}
for some constant $c$.
Suppose next that $r\geq s_y/2$. Then there exists $x_y \in \partial \Omega \cap B_{R/8}{\color{purple}(x)}$ such that $B_{2r}(y) \subset B_{4r}(x_y)$. Therefore, by inequality \eqref{marzo6} and condition \eqref{eq:omega condition} again, 
\begin{align}\label{marzo9}
\frac{1}{\omega(r)}&\bigg(\dashint_{ \Omega \cap B_{r}(y)}\abs{\bfA(\nabla\bfu)-\bfA_{s} }^{\min{\set{2,p'}}}
\dz\bigg)^{\frac{1}{\min{\set{2,p'}}}}
\\ \nonumber & \leq 
\frac{c}{\omega(r)}\bigg(\dashint_{ \Omega \cap B_{4r}(x_y)}\abs{\bfA(\nabla\bfu)-\bfA_{s} }^{\min{\set{2,p'}}}
\dz\bigg)^{\frac{1}{\min{\set{2,p'}}}}
\\ \nonumber & 
\leq \frac{c'}{\omega(R)}\dashint_{\Omega \cap B_{R/2}(x_y)}\abs{\bfA(\nabla \bfu)}\, \dz
+ c'M^{\sharp,p'q}_{\omega,\Omega,R}(\bfF)(x_y)
\\ \nonumber & \
\leq \frac{c''}{\omega(R)}\dashint_{\Omega \cap B_{R}(x)}\abs{\bfA(\nabla \bfu)}\, \dz
+ c'M^{\sharp,p'q}_{\omega,\Omega,2R}(\bfF)(x)
\end{align}
for some constants $c, c', c''$.
Inequalty \eqref{a8} follows from \eqref{marzo8} and \eqref{marzo9}, via the very definition of sharp maximal function \eqref{Msharp} and property \eqref{meanq}. Inequality  \eqref{a15}  is a consequence of inequality \eqref{a8} and of the definition of Campanato seminorm.
\end{proof}

We are now in a position to accomplish the proof of Theorem \ref{thm:campanato}.

\begin{proof}[Proof of Theorem~\ref{thm:campanato}]

\color{black}
A basic energy estimate  obtained by choosing $u$ as a test function in equation \eqref{weaksol}, and 
 \cite[Theorem~5.23]{DieRuzSch08} tell us that  
\begin{align}
\label{eq:a-priori}
\frac{1}{\omega(\diam(\Omega))}\bigg(\int_{\Omega}\bfA(\abs{\nabla \bfu})^{p'}\, \dx\bigg)^{ \frac 1{p'}}\leq \frac{1}{\omega(\diam(\Omega))}\bigg(\int_{\Omega}\abs{\bfF - \mean{\bfF}_\Omega}^{p'}\, \dx\bigg)^{\frac 1{p'}}\leq c\norm{\bfF}_{\Lcal^{\omega (\cdot)}(\Omega)}
\end{align}
for some constant $c$. Let $R_0$ be the radius provided by Proposition \ref{pro:wahnsinn}.
Trivially,
\begin{align}\label{a16}
\norm{\bfA(\nabla \bfu)}_{\Lcal^{\omega (\cdot)}(\Omega)} \leq   \max\bigg\{&\sup_{x\in \Omega} \sup _{r \in (0, R_0/16)}\frac{1}{\omega(r)}\dashint_{ \Omega \cap B_{r}(x)}\abs{\bfA(\nabla\bfu)-\mean{\bfA(\nabla\bfu)}_{ \Omega \cap B_{r}(x)} }
\dy, 
\\ \nonumber & 
\sup_{x\in \Omega} \sup _{r \geq R_0/16}\frac{1}{\omega(r)}\dashint_{\Omega \cap  B_{r}(x)}\abs{\bfA(\nabla\bfu)-\mean{\bfA(\nabla\bfu)}_{ \Omega \cap B_{r}(x)}}
\dy\bigg\}\,.
\end{align}
If $r\geq R_0/16$, then, by \eqref{eq:omega condition}, H\"older's inequality and  \eqref{eq:a-priori}, there exist constants $c$ and $c'$ such
that
\begin{align}\label{a17}
\frac{1}{\omega(r)}\dashint_{\Omega \cap B_r(x)}\abs{\bfA(\nabla {\bfu})-\mean{\bfA(\nabla\bfu)}_{ \Omega \cap B_{r}(x)}}\dy
 \leq  c
\dashint_{\Omega}\abs{\bfA(\nabla \bfu)}\dy
\leq c' \norm{\bfF}_{\Lcal^{\omega (\cdot)}(\Omega)}.
\end{align}
If $r \in (0,R_0/16)$, then, by  \eqref{a15}, H\"older's inequality and  \eqref{eq:a-priori}, there exist constants $c$ and $c'$ such that
\begin{align}\label{a18}
\frac{1}{\omega(r)}\dashint_{ \Omega \cap B_{r}(x)}& \abs{\bfA(\nabla\bfu)-\mean{\bfA(\nabla\bfu)}_{ \Omega \cap B_{r}(x)}}
\dy 
 \\ 
\nonumber & 
\leq c\norm{\bfF}_{\Lcal^{\omega (\cdot)}(\Omega)}+c\dashint_{\Omega}\abs{\bfA(\nabla \bfu)}\, \dy
\leq c'\norm{\bfF}_{\Lcal^{\omega (\cdot)}(\Omega)}\,.
\end{align}
Inequality \eqref{feb1} follows from \eqref{a16}--\eqref{a18}.
\color{black}
\end{proof}

\medskip
{
\begin{proof}[Proof of Corollary \ref{bmovmo}] The assertion about the case when $\bfF\in \setBMO(\Omega)$ is a straighforward consequence of Theorem \ref{thm:campanato}, since the integral on the left-hand side of equation \eqref{balance} agrees with $\log \frac 1r$ if $\omega (r)=1$.
\\ Assume next that
 $\bfF\in \setVMO(\Omega)$, and define the function $\varrho : [0, \infty) \to [0, \infty)$ as 
$$\varrho (r) =
\sup_{
\begin{tiny}
 \begin{array}{c}{
    x \in \Omega} \\
0 < s \leq r
 \end{array}
  \end{tiny}
}
\, \dashint _{\Omega\cap B_s(x)} |\bfF - \mean{\bfF}_{\Omega \cap B_s(x)}|\,
  \dy
 \quad \hbox{if $r>0$,}$$
and $\varrho (0)=0$
Then $\varrho$ is a non-decreasing bounded function, such that $\lim _{r\to 0^+} \varrho (r)=0$. Now, let  $\omega : [0, \infty) \to [0, \infty)$ be the function given by 
\begin{equation}\label{dec35}
\omega (r) = r^{\beta _0} \sup_{s \geq r} \Big[s^{-\beta _0}  \sup_{0<\tau\leq s}\max \big\{\varrho (\tau),   \sqrt{\sigma (\tau) \log 1/\tau}\big\}\Big] \quad \hbox{if $r\in (0,1]$,}
\end{equation}
$\omega (0)=0$, and $\omega (r) =\omega (1)$ if $r>1$. It is easily verified that $\omega$ is a continuous parameter function fulfilling condition \eqref{eq:omega condition}. The function $\omega$ is also non-decreasing. This follows from an argument analogous to that employed in the proof of assertion \eqref{july33} below. 
Moreover, 
\begin{equation}\label{dec33}
\sqrt{\sigma(r)\log(1/r)}\leq \omega(r) \quad \hbox{for $r \in (0,1)$,}
\end{equation}
and hence
\begin{equation}\label{dec34}
\frac{\sigma(r)}{\omega(r)}\int_r^1\frac{\omega(\tau)}{\tau}\, {\rm d}\tau\leq \frac{\omega (1)\sigma(r)\log(1/r)}{\omega(r)}\ \leq   \omega (1) \sqrt{\sigma(r)\log(1/r)}  \quad \hbox{for $r \in (0,1)$.}
\end{equation}
Thus, by assumption \eqref{vanishlog}, condition \eqref{balance} is fulfilled with $\omega$ given by \eqref{dec35}. An application of Theorem \ref{thm:campanato} tells us that $\abs{\nabla \bfu}^{p-2}\nabla \bfu \in \Lcal^\omega(\Omega)$ , whence, in particular, $\abs{\nabla \bfu}^{p-2}\nabla \bfu \in \setVMO(\Omega)$.
%
%
\end{proof}}

\section{Proof of Theorem~\ref{continuity}}
\color{black}

A critical result in view of a proof of Theorem \ref{continuity} is an analogue of the pointwise estimate for the gradient of solutions to problem \eqref{eq:sysA} 
established in Proposition \ref{pro:wahnsinn}, but under condition \eqref{dini} and the a priori assumption that the gradient is bounded. The punctum of this result, contained in the next proposition, is that the mere finiteness of the supremum on the left-hand side of equation \eqref{balance} suffices under such an assumption.

\begin{proposition} \label{pro:wahnsinn1}
 Let $\Omega$, $x, R, \sigma, \omega, q, \bfF$ and $\bfu$ be as in  Proposition~\ref{pro:wahnsinn}, save that  assumption \eqref{a7} is replaced by
\begin{equation}\label{july8}
\sup_{r\in (0,R)}\frac{\sigma(r)}{\omega(r)}\leq \frac{C}{\omega(R)}
\end{equation}
for some positive constant $C$.
Assume in addition, that $\nabla \bfu\in L^\infty(\Omega)$. Then there exist positive constants $c$ and $R_0$, depending on $n,N,p, \omega, q, \Omega$,  such that
\begin{align}\label{july9}
M^{\sharp,\min\set{2, p'}}_{\omega,\Omega,R/8}(|\nabla \bfu|^{p-2}\nabla \bfu)(x)\leq cM^{\sharp,p'q}_{\omega,\Omega,2R}(\bfF)(x)+\frac{c}{\omega(R)}\norm{\nabla\bfu}_{L^\infty(\Omega \cap B_{2R}(x))}^{p-1}\,.
\end{align}
for every $x \in \partial \Omega$ and $R\in (0, R_0)$.
Hence,  
\begin{align}\label{a15'}
\norm{|\nabla \bfu|^{p-2}\nabla \bfu}_{\Lcal^{\omega (\cdot)}(\Omega \cap {B_{R/16}(x)})}\leq c\norm{\bfF}_{\Lcal^{\omega (\cdot)}(\Omega \cap B_{2R}(x))}+\frac{c}{\omega(R)}\|\nabla \bfu\|_{L^\infty(\Omega \cap B_{2R}(x))}^{p-1}
\end{align}
for every $x \in \partial \Omega$ and $R\in (0, R_0)$.
\end{proposition}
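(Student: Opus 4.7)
The plan is to follow the skeleton of the proof of Proposition~\ref{pro:wahnsinn}, but to exploit the hypothesis $\nabla \bfu \in L^\infty(\Omega)$ to bypass the step where, in that proof, smallness of $\sigma(s)\overline\omega(s)/\omega(s)$ was needed to absorb an oscillation term. First, fix $x\in\partial\Omega$ and choose $R_0$ so small that the local coordinates of Subsection~\ref{sec:boundary} are available on $\Omega\cap B_{2R}(x)$ for every $R\in(0,R_0)$. For $s\in(0,\theta R/2)$, apply Proposition~\ref{pro:boundarydecay} with $\bfF_0=\mean{\bfF}_{\Omega\cap B_s(x)}$ to obtain the decay inequality \eqref{eq:0607}, in which the first term on the right-hand side is
\[
\frac{c\sigma(s)}{\omega(s)}\dashint_{\Omega\cap B_s(x)}|\bfA(\nabla\bfu)|\,\dy.
\]

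The key observation is that, instead of invoking Lemma~\ref{lem:iter} as in the proof of Proposition~\ref{pro:wahnsinn} (which forces an oscillation term into the right-hand side and hence demands smallness of $\sigma\overline\omega/\omega$), we may now estimate
\[
\dashint_{\Omega\cap B_s(x)}|\bfA(\nabla\bfu)|\,\dy \leq \|\nabla\bfu\|_{L^\infty(\Omega\cap B_{2R}(x))}^{p-1},
\]
and then use hypothesis \eqref{july8} to bound the first term on the right-hand side of \eqref{eq:0607} by $\frac{cC}{\omega(R)}\|\nabla\bfu\|_{L^\infty(\Omega\cap B_{2R}(x))}^{p-1}$. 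The second term on the right-hand side of \eqref{eq:0607}, after using that $\sigma$ is bounded, is controlled by $c\,M^{\sharp,p'q}_{\omega,\Omega,2R}(\bfF)(x)$. This yields, for $s\in(0,\theta R/2)$,
\begin{align*}
\Phi(\theta s) \leq \tfrac12\Phi(s) + \frac{c}{\omega(R)}\|\nabla\bfu\|_{L^\infty(\Omega\cap B_{2R}(x))}^{p-1} + c\,M^{\sharp,p'q}_{\omega,\Omega,2R}(\bfF)(x),
\end{align*}
where $\Phi(s)$ denotes the scale-$s$ oscillation quotient on the left-hand side of \eqref{eq:0607}.

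Next, I would observe that $\Phi$ is a priori bounded on the large-scale range $s\in[\theta R/2,R]$. Indeed, $\bfA(\nabla\bfu)\in L^\infty(\Omega)$ with $\|\bfA(\nabla\bfu)\|_\infty \leq \|\nabla\bfu\|_\infty^{p-1}$, and the doubling property \eqref{eq:omega condition} gives $\omega(s)\geq c\,\omega(R)$ on this range, so that $\Phi(s)\leq C\|\nabla\bfu\|_\infty^{p-1}/\omega(R)$ there. Then a standard geometric iteration of the recursive bound above across the scales $\theta^k s_0$, $s_0\in[\theta R/2,R]$, yields
\[
\sup_{s\in(0,R)}\Phi(s) \leq \frac{c}{\omega(R)}\|\nabla\bfu\|_{L^\infty(\Omega\cap B_{2R}(x))}^{p-1} + c\,M^{\sharp,p'q}_{\omega,\Omega,2R}(\bfF)(x).
\]
For $s\in[\theta R/2,R]$ the same type of estimate is obtained directly by \eqref{eq:bigballs} (with the $L^1$ average of $\bfA(\nabla\bfu)$ replaced by the trivial $L^\infty$ bound). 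Finally, to pass from oscillations on balls $B_s(x)$ centered at the boundary point $x$ to oscillations on arbitrary balls $B_r(y)$ with $y\in\Omega\cap B_{R/8}(x)$ and $r\leq R/8$, I would argue exactly as at the end of the proof of Proposition~\ref{pro:wahnsinn}: if the ball $B_{2r}(y)$ lies inside $\Omega$ one invokes the interior local estimate \cite[Theorem 1.3 and Remark 1.4]{BCDKS}, while if $2r\geq\mathrm{dist}(y,\partial\Omega)$ one compares with a concentric boundary ball around a point $x_y\in\partial\Omega\cap B_{R/8}(x)$ and uses the estimate just obtained. This proves \eqref{july9}, and \eqref{a15'} follows from \eqref{july9} by the definition of the Campanato seminorm.

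The main point where the proof of Proposition~\ref{pro:wahnsinn} requires the smallness in \eqref{a7} is the absorption step after \eqref{forwahnsinn1}; here this step is not needed because the $L^\infty$ bound on $\nabla\bfu$ makes the term $\tfrac{\sigma(s)}{\omega(s)}\dashint|\bfA(\nabla\bfu)|$ harmless, and the factor $\tfrac12$ in front of the oscillation term of \eqref{eq:0607} is by itself enough to run the iteration. Conceptually this is the only delicate point of the argument. The remaining steps are essentially a repetition of those in the proof of Proposition~\ref{pro:wahnsinn}.
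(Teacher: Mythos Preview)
Your proposal is correct and follows essentially the same route as the paper's proof: you start from \eqref{eq:0607}, replace the averaged term $\dashint|\bfA(\nabla\bfu)|$ by the $L^\infty$ bound $\|\nabla\bfu\|_\infty^{p-1}$, and then use hypothesis \eqref{july8} to convert the coefficient $\sigma(s)/\omega(s)$ into $C/\omega(R)$, thereby avoiding Lemma~\ref{lem:iter} and the smallness-based absorption step entirely. The paper phrases the subsequent step as ``take the supremum and absorb the $\tfrac12$-term'' (as in \eqref{a13}--\eqref{a14}), whereas you phrase it as a geometric iteration of the recursive inequality; these are equivalent standard devices, and the remaining passage from boundary-centered balls to arbitrary balls in $B_{R/8}(x)$ is identical in both.
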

\begin{proof}
We employ the notations of Proposition~\ref{pro:wahnsinn}. 
The proof of inequality \eqref{july9} proceeds along the same lines as that of inequality \eqref{a8} of Proposition~\ref{pro:wahnsinn}. The situation is now actually simpler, owing to the boundedness assumption on the gradient. In fact, 
inequality  \eqref{eq:0607} and  assumption \eqref{july8} immediately imply that
\begin{align*}
\begin{aligned}
&\frac{1}{\omega(\theta s)}\bigg(\dashint_{ \Omega \cap B_{\theta s}(x)}\abs{\bfA(\nabla\bfu)-\bfA_{\theta s} }^{\min{\set{2,p'}}}
\dx\bigg)^{\frac{1}{\min{\set{2,p'}}}}
\\
&\leq \,{\frac{c}{\omega (R)}}\norm{ \bfA(\nabla\bfu)}_{L^\infty(\Omega \cap B_{s}(x))}
+\frac{c}{\omega(s)}\bigg(\dashint_{\Omega \cap B_{s}(x)}\abs{\bfF-\bfF_0}^{p'q}\dx \bigg)^\frac1{p'q} 
\\
&\qquad + \frac{1}{2\omega(s)}\bigg(\dashint_{\Omega \cap B_{s}(x)}\abs{\bfA(\nabla{\bfu})-\bfA_{s} }^{\min{\set{2,p'}}}\dx\bigg)^\frac{1}{\min{\set{2,p'}}} \qquad \quad \hbox{for $s \in (0, \tfrac R2)$.}
\end{aligned}
\end{align*}
This inequality replaces \eqref{a10}. The rest of the argument in the proof of inequalities \eqref{july9} and  \eqref{a15'}   is the same as that of inequalities  \eqref{a8} and  \eqref{a15} in Proposition~\ref{pro:wahnsinn}.  
%
%
\end{proof}

\begin{proof}[Proof of Theorem ~\ref{continuity}]
We begin by showing that $|\nabla \bfu| \in L^\infty (\Omega)$. To this purpose, observe that, owing to assumption \eqref{dini},
there exists an increasing function $\eta : (0, 1) \to (0, \infty)$ such that $\lim _{r \to 0^+} \eta (r) =0$, and still
\begin{equation}\label{july30}
\int_0 \frac{\omega(r)}{\eta (r)r}\, \mathrm{d}r <\infty\,.
\end{equation}
For instance, one can choose $\eta (r) = 	{\underline \omega} (r)^{1-\gamma}$ for some $\gamma \in (0,1)$.
Next, define the non-decreasing function $\omega_1 : (0, 1) \to (0,\infty)$ as 
$$\omega_1 (r) = \inf _{s\geq r} \frac{\omega (s)}{\eta (s)} \quad \hbox{for $r> 0$.}$$
One has that
\begin{equation}\label{july31}
c \omega (r) \leq \omega_1 (r) \leq \frac{\omega (r)}{\eta (r)}\quad \hbox{for $r >0$,}
\end{equation}
for some positive constant $c$. In particular, the second inequality in \eqref{july31} and condition \eqref{july30} ensure that condition \eqref{dini} is still satisfied with $\omega$ replaced by $\omega _1$, namely
\begin{equation}\label{july37}
\int _0\frac{\omega_1(r)}{r}\, \mathrm{d}r < \infty\,.
\end{equation}
Moreover, the first inequality in \eqref{july31} tells us that $\partial \Omega \in W^1 \mathcal L^{\omega_1(\cdot)}$.
Now, consider the function $\eta _1: (1, \infty) \to [0, \infty)$ given by
$$\eta _1(r) = \frac{\omega(r)}{\omega_1 (r)} \quad \hbox{for $r > 0$,}$$
and observe that
$$\omega _1(r) = \frac {\omega (r)}{\eta_1(r)}  \quad \hbox{for $r > 0$,}$$
and 
\begin{equation}\label{july32}
\eta _1(r) \geq \eta(r)  \quad \hbox{for $r > 0$.}
\end{equation}
Also, 
we claim that 
\begin{equation}\label{july33}
\hbox{the function $\eta_1$ is non-decreasing.}
\end{equation} 
To verify this claim, note that there exists a  (possibly empty) family $\{(a_i, b_i)\}$, with $i \in I \subset \setN$, of disjoint intervals in $(0,1)$, with $\lim _{i\to \infty} a_i  =0$ if $I$ is infinite, such that, if $r \in (0,1)$, then either $\omega_1(r) = \frac{\omega (r)}{\eta (r)}$, or $r \in [a_i, b_i]$ for some $i \in I$ and $\omega _1(r) = \omega _1(s)= \frac{\omega (a_i)}{\eta (a_i)}$ for every $s \in  [a_i, b_i]$. 
Now, let $r, s \in (0,1)$ be such that $r<s$. If $\omega_1(r) = \frac{\omega (r)}{\eta (r)}$ and $\omega_1(s) = \frac{\omega (s)}{\eta (s)}$, then
$$\eta _1 (r) = \eta (r) \leq \eta (s) = \eta _1(s)\,,$$
since $\eta$ is non-decreasing. If $r, s \in [a_i, b_i]$ for some $i\in I$, then 
$$\eta _1 (r) = \frac{\omega (r)}{\omega _1(r)} = \frac{\omega (r)}{\omega _1(s)}\leq  \frac{\omega (s)}{\omega _1(s)}= \eta _1(s)\,,$$
since $\omega$ is non-decreasing. If $r \in [a_i, b_i]$ for some $i\in I$, $s \geq b_i$ and 
$\omega_1(s) = \frac{\omega (s)}{\eta (s)}$, then 
$$\eta_1(r) = \frac{\omega (r)}{\omega _1(r)}  = \frac{\omega (r)}{\omega _1(b_i)} \leq \frac{\omega (b_i)}{\omega _1(b_i)} = \eta (b_i) \leq \eta (s) \leq \eta_1(s)\,.$$
Finally, if   $s \in  [a_i, b_i]$ for some $i \in I$,  $r \leq a_i$ and $\omega_1(r) = \frac{\omega (r)}{\eta (r)}$, then
$$\eta _1 (r) = \eta (r) \leq \eta (a_i) \leq \eta _1(a_i) = \frac{\omega (a_i)}{\omega _1(a_i)} =  \frac{\omega (a_i)}{\omega _1(s)} \leq  \frac{\omega (s)}{\omega _1(s)}= \eta _1(s)\,.$$
Property \eqref{july33} is thus established. This
property  ensures that the function $\omega_1$ fulfills assumtpion \eqref{eq:omega condition} with the same   exponent $\beta$  as $\omega$, since
\begin{equation}\label{july34}
\frac{\omega _1(r)}{r^\beta} = \frac{\omega (r)}{r^\beta} \frac 1{\eta_1(r)}  \quad \hbox{for $r > 0$.}
\end{equation}
Next, we claim that
\begin{equation}\label{july35}
\lim_{r \to 0^+}\eta _1(r) =0\,.
\end{equation}
To verify this claim, assume, by contradiction, that \eqref{july35} fails. Thus, there exists a positive number $c$ such that
\begin{equation}\label{july36}
\omega(r) \geq c \inf _{s\geq r} \frac{\omega (s)}{\eta (s)} \quad \hbox{for $r \in (0,1)$.}
\end{equation}
As a consequence of the properties of the family $\{(a_i, b_i)\}$ introduced above, 
there exists a sequence $\{r_k\}$, with  $\lim _{k\to \infty} r_k  =0$, such that
$$\omega_1(r_k) =  \inf _{s\geq r_k} \frac{\omega (s)}{\eta (s)} = \frac{\omega (r_k)}{\eta (r_k)}$$
for $k \in \mathbb N$. From \eqref{july36} with $r=r_k$ we infer that
$$\eta (r_k) \geq c \quad \hbox{for $k \in \mathbb N$.}$$
This contradicts the fact that $\lim _{r\to 0^+}\eta (r) =0$.
\\ By properties \eqref{july35} and \eqref{july37},
\begin{equation}\label{july38}
\lim _{r \to 0^+} \frac{\omega (r)}{\omega _1(r)} \int _r^1\frac{\omega_1(s)}{s}\,\mathrm{d}s \leq \lim _{r \to 0^+} \eta_1(r) \int _0^1\frac{\omega_1(s)}{s}\,\mathrm{d}s =0\,.
\end{equation}
This fact ensures that assumption \eqref{balance} is fulfilled with $\sigma$ replaced by $\omega$, and $\omega$ replaced by $\omega_1$. This property, combined with the properties of $\omega _1$ established above,  enables us to apply Theorem \ref{thm:campanato} with the same replacements for $\sigma$ and $\omega$. In particular, we infer that $|\nabla \bfu|^{p-2}\nabla \bfu \in \Lcal^{\omega_1(\cdot)}({\Omega})$, whence, by condition \eqref{july37} and inclusion \eqref{spanne1} with $\omega$ replaced by $\omega_1$, we have  that $|\nabla \bfu| \in L^\infty (\Omega)$. 
\\ We are now in a position 
 to apply Proposition~\ref{pro:wahnsinn1}.  Notice that condition \eqref{july8} is satisfied, with $\sigma = \omega$, owing to assumption \eqref{dini}. To be precise, from \eqref{dini} one can deduce  that condition  \eqref{july8} holds with $\sigma  = \omega$ and the quantity $\int_0^1\tfrac{\omega(\rho)}\rho\, {\rm d} \rho$ on the right-hand side.
%
Starting from inequality \eqref{a15'}, and arguing as in the proof of Theorem
\ref{thm:campanato} yields the conclusion.
\end{proof}


   \section{Sharpness of results}
%
%

Our proof  of Theorem \ref{sharpness} is 
based on precise information on conformal transformations of certain planar domains established in \cite{Wa}, coupled with the  embedding theorem contained in the following proposition. In its statement, $W^{1}_0\mathcal L ^{\omega (\cdot)}(\Omega)$ denotes the Sobolev type space of those functions in $\Omega$ whose continuation by $0$ outside $\Omega$ belongs to  $W^{1}\mathcal L ^{\omega (\cdot)}(\setR^n)$.

\begin{proposition}\label{embedding}
Let $\Omega$ be a bounded open set in $\mathbb R^n$, let $\omega : (0,1) \to [0, \infty)$ be a parameter  function fulfilling condition \eqref{divint}, and let $\overline \omega$ be the function associated with $\omega$ as in \eqref{omegabar}.
Let $\upsilon  : (0,1) \to [0, \infty)$ be the function defined by 
\begin{align}\label{sep17}
\upsilon (r) = r \,\overline \omega (r) \quad \hbox{for $r \in (0,1)$.}
\end{align}
Then 
\begin{equation}\label{sep18}
W^{1}_0\mathcal L ^{\omega (\cdot)}(\Omega) \to C^{0, \upsilon (\cdot)}(\Omega).
\end{equation}
%
\end{proposition}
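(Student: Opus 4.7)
The plan is to combine Campanato's classical dyadic telescoping at the level of $\nabla\bff$ with the Poincar\'e inequality on balls, transferring the oscillation control up to the level of $\bff$ and identifying $\upsilon(r)=r\,\overline\omega(r)$ as the resulting modulus of continuity.

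First, I extend $\bff$ by zero outside $\Omega$, so that $\nabla\bff\in \mathcal L^{\omega(\cdot)}(\Rn)$ is supported in the bounded set $\overline\Omega$. Writing $\bfC_r(x):=\mean{\nabla\bff}_{B_r(x)}$, the standard dyadic chaining argument applied to these ball averages yields
\[
|\bfC_r(x)-\bfC_R(x)|\le c\int_r^R\frac{\omega(\rho)}{\rho}\,{\rm d}\rho \le c\,\overline\omega(r)
\]
for $0<r\le R\le 1$ and every $x\in\Rn$, with $c$ depending only on the Campanato seminorm of $\nabla\bff$. Because $\bff$ has compact support, $|\bfC_1(x)|$ is uniformly bounded (by a multiple of $\|\nabla\bff\|_{L^1(\Rn)}$), and hence $\mean{|\nabla\bff|}_{B_r(x)}\le c(1+\overline\omega(r))$ uniformly in $x\in\Rn$ and $r\in(0,1]$.

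Next, for $x,y\in\Omega$ with $r:=|x-y|<1$, the Poincar\'e inequality applied on the parent ball $B^\ast=B_{2r}\big(\tfrac{x+y}{2}\big)\supset B_r(x)\cup B_r(y)$ together with the previous gradient bound gives
\[
\big|\mean{\bff}_{B_r(x)}-\mean{\bff}_{B_r(y)}\big|\le c\,r\,\mean{|\nabla\bff|}_{B^\ast}\le c\,r\,(1+\overline\omega(r)).
\]
The same Poincar\'e-with-gradient-bound estimate applied on a dyadic chain of balls $B_{r_k}(x)$ with $r_k=2^{-k}r$ and summed in $k$ produces
\[
\big|\bff(x)-\mean{\bff}_{B_r(x)}\big|\le c\sum_{k\ge 0}r_k\big(1+\overline\omega(r_k)\big)\le c\int_0^r\overline\omega(\rho)\,{\rm d}\rho+c\,r,
\]
with the symmetric estimate for $y$ in place of $x$; the pointwise identity $\bff(x)=\lim_{r\to 0}\mean{\bff}_{B_r(x)}$ holds at Lebesgue points of $\bff$, which exhaust $\Omega$ up to a null set, and the inequality then extends by continuity once the embedding is established.

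Finally, an integration by parts using $\overline\omega'(\rho)=-\omega(\rho)/\rho$, the vanishing of $\rho\,\overline\omega(\rho)$ as $\rho\to 0^+$ (which follows from $\omega$ being non-decreasing and bounded on $[0,1]$), together with the elementary bound $\omega(\rho)\le\overline\omega(\rho)/\log(1/\rho)$ for small $\rho$, yields the key estimate
\[
\int_0^r\overline\omega(\rho)\,{\rm d}\rho \le c\,r\,\overline\omega(r)=c\,\upsilon(r).
\]
Combining the three pieces via the triangle inequality delivers $|\bff(x)-\bff(y)|\le c\,\upsilon(|x-y|)$, which is the claimed embedding \eqref{sep18}. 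The principal obstacle is precisely this last integral estimate: although $\overline\omega$ diverges as $r\to 0^+$ under \eqref{divint}, it does so slowly enough for its integral on $(0,r)$ to remain comparable to the endpoint value $r\,\overline\omega(r)$; this is exactly where condition \eqref{divint} enters qualitatively to deliver the correct modulus $\upsilon$.
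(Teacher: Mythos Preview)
Your argument is correct and takes a genuinely different route from the paper's. The paper proceeds indirectly: it first invokes Spanne's theorem to embed $\mathcal L^{\omega(\cdot)}$ into a suitable Marcinkiewicz space $M^{\varsigma(\cdot)}$ with $\varsigma(r)=1/\overline\omega(r^{1/n})$, verifies that this is a rearrangement-invariant space (via a weighted Hardy inequality), and then applies a general Sobolev embedding from \cite{cianchi-randolfi IUMJ} for $W^1_0M^{\varsigma(\cdot)}$ into spaces of continuous functions, finally identifying the resulting modulus with $\upsilon$ through some explicit computations. Your approach bypasses all this machinery: the dyadic telescoping at the gradient level combined with Poincar\'e on balls is entirely self-contained and yields the modulus $\upsilon$ directly. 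The key technical point, that $\int_0^r\overline\omega(\rho)\,{\rm d}\rho\le c\,r\,\overline\omega(r)$, follows cleanly from your integration by parts plus $\int_0^r\omega\le r\,\omega(r)\le c\,r\,\overline\omega(r)$ for small $r$ (you do not actually need the $\log$ bound you mention). The paper's route has the merit of situating the embedding within the general theory of rearrangement-invariant Sobolev embeddings, which may be useful for related questions; your route is more elementary and arguably more transparent for this particular statement. One small point worth making explicit: since $\overline\omega(1)=0$, the bound $r\le c\,\upsilon(r)$ that absorbs your residual ``$+c\,r$'' terms only holds for $r$ bounded away from $1$; this is harmless because, as the paper notes, only the behaviour of the modulus near $0$ is relevant on a bounded domain.
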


\medskip
\begin{proof}
Assume, without loss of generality, that $|\Omega|=1$.
Define the (increasing) function $\varsigma : (0,1) \to [0, \infty)$ as 
\begin{align}\label{sep16}
\varsigma (r) = \frac 1{\overline \omega (r^{\frac 1n})} \quad \hbox{for $r \in (0,1)$,}
\end{align}
and  denote by $M^{\varsigma (\cdot)}(\Omega)$ the Marcinkiewicz space associated with $\varsigma$, and consisting of those measurable functions $\bff$ on $ \Omega $ such that
\begin{equation}\label{sep15}
\sup _{0<r<1} \varsigma (r) \bff^*(r) < \infty\,.
\end{equation}
Here, $\bff^*$ denotes the decreasing rearrangement of $\bff$, defined on $(0, 1)$ as
$$\bff^*(r) = \inf\{t\geq 0: |\{|\bff|> t\}|\leq r\} \qquad \hbox{for $r\in (0,1)$,}$$
where $|E|$ denotes Lebesgue measure of a set $E \subset \setR^n$.
We claim that the supremum in \eqref{sep15}
  is equivalent, up to multiplicative constants independent of $\bff$, to the rearrangement-invariant norm --  in the sense of Luxemburg (see \cite{BS}) --  defined as 
\begin{equation}\label{sep19}
\|\bff\|_{M^{\varsigma (\cdot)}(\Omega)} = \sup _{0<r<1} \varsigma (r) \bff^{**}(r)\,,
\end{equation}
where we have set $\bff^{**}(r) = \tfrac 1r \int_0^r\bff^*(s)\, {\rm d}s$ for $r \in (0,1)$.
Thus, $M^{\varsigma (\cdot)}(\Omega)$ is a rearrangement-invariant space equipped with this norm. Since $\bff^* \leq \bff^{**}$, this equivalence will follow if we show that
\begin{align}\label{sep20}
\bigg\|\frac{\varsigma (r)}r \int _0^r g(\rho)\, \mathrm{d}\rho  \bigg\|_{L^\infty(0,1)} \leq C \|\varsigma (r) g(r)\|_{L^\infty(0,1)}
\end{align}
for some constant $C$ and for every measurable function $g : (0,1) \to [0, \infty)$. A characterization of weighted Hardy type inequalities tells us that inequality \eqref{sep20} holds if (and only if)
\begin{align}\label{sep21}
\sup _{0<s<1} \bigg\|\frac{\varsigma (r)}r   \bigg\|_{L^\infty(s,1)}   \bigg\|\frac 1{\varsigma (r)} \bigg\|_{L^1(0,s)}\leq C
\end{align}
for some constant $C$ -- see e.g. \cite[Theorem 1.3.2/2]{Mazya_book}. Since the function $\varsigma$ is increasing, it suffices to verify inequality \eqref{sep21} with the supremum extended to values of $s$ in a sufficiently small right neighbourhood of  $0$. Given $a>1$, one has that
\begin{align}\label{sep21'}\big(r \,\overline\omega (r^{\frac 1n})\big)' & = \int _{r^{\frac 1n}}^1\frac{\omega (\rho)}\rho\, \mathrm{d}\rho - \frac{\omega (r^{\frac 1n})}n \geq  \int _{r^{\frac 1n}}^{ar^{\frac 1n}}\frac{\omega (\rho)}\rho\,\mathrm{d} \rho - \frac{\omega (r^{\frac 1n})}n \\  \nonumber & \geq \omega (r^{\frac 1n}) \int _{r^{\frac 1n}}^{ar^{\frac 1n}}\frac{\mathrm{d}\rho}\rho  - \frac{\omega (r^{\frac 1n})}n = \omega (r^{\frac 1n}) (\log a - \tfrac 1n) >0\,,
\end{align}
provided that $a>e^{\frac 1n}$, and $r\in (0, \tfrac{1}{{a^{n}}})$. Hence, the function $\frac{\varsigma (r)}r $ is decreasing in $(0, \tfrac{1}{{a^{n}}})$. Therefore, inequality \eqref{sep21} will follow if we show that 
\begin{align}\label{sep22}
\frac 1r \int _0^r \frac {\mathrm{d}\rho}{\varsigma (\rho)} \leq  \frac {C}{\varsigma (r)} 
\end{align}
for some constant $C$ and for small $r$. An application of Fubini's theorem tells us that
\begin{align}\label{sep23}
\frac 1r \int _0^r \frac {\mathrm{d}\rho}{\varsigma (\rho)} = \frac 1r \int _0^{r^{\frac 1n}}\omega (\rho) \rho ^{n-1}\, \mathrm{d}\rho + \int _{r^{\frac 1n}}^1 \frac{\omega (\rho)}\rho \, d\rho\, \quad \hbox{for $r \in (0,1)$.}
\end{align}
The second addend on the right-hand side of \eqref{sep23} agrees with $ \frac {1}{\varsigma (r)}$. On the other hand,
$$\frac 1r \int _0^{r^{\frac 1n}}\omega (\rho) \rho ^{n-1}\, \mathrm{d}\rho \leq \frac 1{r^{\frac 1n}} \int _0^{r^{\frac 1n}}\omega (\rho) \, \mathrm{d}\rho \quad \hbox{for $r \in (0,1)$.}$$
Since the right-hand side of this inequality is bounded, whereas $\frac {1}{\varsigma (r)}$ diverges to $\infty$ as $r \to 0^+$, inequality \eqref{sep22} follows via \eqref{sep23}. 
\\ We have thus established that $M^{\varsigma (\cdot)}(\Omega)$ is a rearrangement-invariant space. Now, recall that the  the fundamental function   
$\varphi _{M^{\varsigma (\cdot)}}: [0, 1] \to [0, \infty)$ of $M^{\varsigma (\cdot)}(\Omega)$ is defined as 
$$\varphi _{M^{\varsigma (\cdot)}} (r) =\|\chi _{E}\|_{M^{\varsigma (\cdot)}(\Omega)} \quad \hbox{for $r \in (0,1)$,}$$
where $E$ is any measurable set contained in $\Omega$ and such that $|E|=r$, and $\chi _{E}$ stands for its characteristic function. 
Since, by \eqref{sep21'}, the function $\varsigma$ is quasi-concave, \cite[Chapter 2, Proposition 5.8]{BS} tells us that
\begin{equation}\label{sep24}
\varphi _{M^{\varsigma (\cdot)}} (r) = \varsigma (r) \quad \hbox{for $r \in (0,1)$.}
\end{equation}
\\ From  \cite[Theorem 1]{Spa65}  one infers  that 
\begin{align}\label{sep25}
W^1_0\mathcal L^{\omega (\cdot)}(\Omega) \to W^1_0 M^{\varsigma (\cdot)}(\Omega).
\end{align}
%
The associate space of $M^{\varsigma (\cdot)}(\Omega)$ is the Lorentz space $\Lambda (\Omega)$ equipped with the norm defined as 
$$\|\bff\|_{\Lambda (\Omega)} = \int _0^1 \bff^*(r)\Big(\frac r{\varphi _{M^{\varsigma (\cdot)}} (r)}\Big)'\, \mathrm{d}r$$
for a measurable function $\bff$ in $\Omega$ --
see \cite[Corollary 1.9]{GogPick}. Owing to
\cite[Theorem 3.4]{cianchi-randolfi IUMJ},
\begin{equation}\label{sep29}
W^{1}_0 M ^{\varsigma (\cdot)}(\Omega) \to  C^{0, \nu(\cdot)}(\Omega),
\end{equation}
%
where $\nu : [0, \infty) \to [0, \infty)$  is the function given by
\begin{align}\label{sep26}
\nu (r) = \|\rho ^{-\frac 1{n'}}\chi _{(0, r^n)}(\rho)\|_{\Lambda (0,1)} \quad \hbox{for $r \in [0, 1]$}
\end{align}
and by $\nu(r)=\nu(1)$ for $r>1$,
provided that the norm on the right-hand side of \eqref{sep26} is finite for $r \in [0,1]$ and tends to $0$ as $r \to 0^+$.
We have that 
\begin{align}\label{sep27}
\nu (r) & = \int _0^{r^n} \rho ^{-\frac 1{n'}}  \Big(\frac \rho{\varphi _{M^{\varsigma (\cdot)}} (\rho)}\Big)'\, {\rm d}\rho \\ \nonumber & = \frac{r}{\varsigma (r^n)} +  \int _0^{r^n} \frac{d\rho}{\varsigma (\rho) \, \rho ^{\frac 1{n'}}}\approx  \int _0^{r^n} \frac{\mathrm{d}\rho}{\varsigma (\rho) \, \rho ^{\frac 1{n'}}}\quad \hbox{for $r \in (0,1)$,}
\end{align}
up to multiplicative norms independent of $r$.
Observe that the second equality holds by an integration by parts,  owing to \eqref{sep24} and to the fact that
$$\lim _{r\to 0^+} \frac{r}{\varsigma (r^n)} = \lim _{r\to 0^+} r \int _r^1\frac{\omega (\rho)}{\rho}\,\mathrm{d}\rho \leq \lim _{r\to 0^+} \omega (1)  r \log \frac 1r 
 =0\,,$$
and the   equivalence inasmuch as
$$  \int _0^{r^n} \frac{\mathrm{d}\rho}{\varsigma (\rho) \, \rho ^{\frac 1{n'}}} \geq \frac 1{\varsigma (r^n)} \int _0^{r^n}  \frac{\mathrm{d}\rho}{\rho ^{\frac 1{n'}}} = n \frac{r}{\varsigma (r^n)}\quad \hbox{for $r \in (0,1)$.}$$
On the other hand, 
\begin{align}\label{sep28}
\int _0^{r^n} \frac{\mathrm{d}\rho}{\varsigma (\rho) \, \rho ^{\frac 1{n'}}} & = \int _0^{r^n} \frac{1}{  \rho ^{\frac 1{n'}}} \int_{\rho ^{\frac 1n}}^1\frac{\omega (s)}{s}\, ds\, \mathrm{d}\rho \\
\nonumber & = n \int _0^r\omega (s)\, \mathrm{d}s + n r \int _r^1 \frac{\omega (s)}{s}\, \mathrm{d}s \approx r\, \overline \omega (r)\quad \hbox{for $r \in (0,1)$,}
\end{align}
where the second equality follows from Fubini's theorem, and the equivalence by the fact that
$$\lim _{r\to 0^+} \bigg(\int _0^r\omega (s)\, \mathrm{d}s \bigg)\bigg(r \int _r^1 \frac{\omega (s)}{s}\, ds\bigg)^{-1} =0\,,$$
as is easily seen via an application of De L'Hopital's rule.
\\ Embedding \eqref{sep18} is a consequence of \eqref{sep25}, \eqref{sep29}, \eqref{sep27} and \eqref{sep28}.
\end{proof}

\medskip

\begin{proof}[Proof of Theorem \ref{sharpness}] 
Let $\overline \omega$ be the function associated with $\omega$ as in \eqref{omegabar}.
 Define the parameter function $\widehat \sigma$ as
%
\begin{align}
  \label{eq:sigma}
  \widehat\sigma (r)= 
\gamma\,
  \frac{\omega(r)}{\overline{\omega}(r)} \quad \hbox{for  $r \in \big(0, \tfrac 12\big]$,}
\end{align}
$ \widehat\sigma (0)=0$ and $ \widehat\sigma (r)= \widehat\sigma (\tfrac 12)$ if $r >\tfrac 12$.
Here, $\gamma$  is a positive number to be chosen later. 
 Note that $\widehat \sigma$ is an increasing continuous, function, being the product of increasing continuous functions.  Also, $\widehat \sigma \in C^1(0, \infty)$, since we are assuming that $\omega \in C^1(0, \infty)$. {We claim that the function $\widehat \sigma (r)/r$ is non-increasing  in $(0, \delta)$ for suffciently small $\delta$.  
Indeed, if $\omega (0) >0$, then we may assume, without loss of generality, that $\omega (r) = 1$, and our claim follows trivially. Suppose next that $\omega (0)=0$. As a preliminary observation, note 
 that the existence of $\lim_{r\to 0^+}\frac{r\omega '(r)}{\omega (r)}$, coupled with assumption \eqref{divint}, ensures that in fact
\begin{equation}\label{dec1028}
\lim_{r\to 0^+}\frac{r\omega '(r)}{\omega (r)} =0\,.
\end{equation}
Indeed, failure of  \eqref{dec1028} would imply that
$$\int _0^\delta  \frac{\omega(r)}r\, \mathrm{d}r \leq c \int _0^\delta \omega '(r)\, \mathrm{d}r = c\,\omega (\delta)  < \infty$$
for some positive constants $c$ ad $\delta$, thus contradicting \eqref{divint}. Now,
\begin{align}\label{dec1021}
(\widehat \sigma (r)/r)' = (r \overline \omega (r))^{-2}\big[(r\omega '(r) - \omega (r)) \overline \omega (r) + \omega (r)^2\big]\quad \hbox{for $r \in (0,\tfrac 12)$.}
\end{align}
Since 
\begin{equation}\label{dec1026}
\omega (r) = \int _0^r\omega '(s)\, \mathrm{d}s \quad \hbox{for $r>0$,}
\end{equation}
an integration by parts tells us that
\begin{equation}\label{dec1025}
\overline \omega(r) = \omega (r) \log \frac 1r  + \int _r^1 \omega '(s) \log \frac 1s \, \mathrm{d}s \quad \hbox{for $r\in (0, \tfrac 12)$.}
\end{equation}
Equations \eqref{dec1021} and \eqref{dec1025} imply that
\begin{align}\label{dec1029}
(\widehat \sigma (r)/r)' =  (r \overline \omega (r))^{-2}\omega (r)^2 \bigg[\bigg(\frac{r\omega '(r)}{\omega (r)}-1\bigg)\bigg(\log \frac 1r  + \frac{1}{\omega (r)}\int _r^1 \omega '(s) \log \frac 1s \, \mathrm{d}s\bigg) + 1\bigg]
\end{align}
for $r\in (0,\tfrac 12)$. Thanks to \eqref{dec1028}, there exists $\delta >0$ such that the right-hand side of equation \eqref{dec1029} is negative for $r \in (0, \delta)$. Our claim is thus verified.
  \\ The increasing monotonicity of the function $\widehat  \sigma(r)$ and  the decreasing monotoncity of the function $\widehat \sigma (r)/r $ ensure that $\widehat \sigma$ is equivalent to a concave function $\sigma \in C^1(0, \delta)$, in the sense that
\begin{equation}\label{marzo11} \frac 12 \sigma (r) \leq \widehat \sigma (r) \leq \sigma (r) \quad \hbox{for $r \in (0, \delta)$.}
\end{equation}
One can choose, for instance, $\sigma = \Xi^{-1}$, where $\Xi : [0, \delta) \to \setR$ is the function given by 
$\Xi (r) = \int _0^r \frac {\widehat \sigma ^{-1}(s)}s\, ds$ for $r \in [0, \delta)$.
%
%
} 
Let $\sigma$ be extended to a continuously differentiable, concave increasing bounded function in  $(0, \infty)$, 
still denoted by $\sigma$. In particular, the function $\frac{\sigma (r)}r$ is non-increasing. 
\\
Define the function $\psi :\, [0,\infty) \to [0,\infty)$ as 
$$\psi(r) = \int_0^r \sigma(\rho)\,{\rm d}\rho \quad \hbox{for $r \geq 0$.}$$
 Then
\begin{align}\label{eq:sigmapsi}
  r\, \sigma(r) &\geq \psi(r) \geq \frac r2 \sigma \Big( \frac r
                  2\Big) \geq \frac 14 r\, \sigma(r)\quad \hbox{for $r>0$,}
\end{align}
where the first two inequalities hold since $\sigma $ is increasing, and  the last one owing to the
monotonicity of the function  $\frac{\sigma (r)}r$.
%
Inasmuch as $\psi ' = \sigma$, and  the latter is a concave function, one can verify that $\psi \in C^{1, \sigma(\cdot)}$,  and hence $\psi \in W^1\mathcal L^{\sigma(\cdot)}\cap C^{0,1}$.  
\\ Let us identify  $\setR^2$  with $\setC$, and define the
domain $\Omega \subset \setR^2$ as 
\begin{align*}
  \Omega & = \bigset{\xi = x_1 + ix_2 \,:\, \abs{\xi} < \tfrac 12, x_2 > -
           \psi(\abs{x_1})}.
\end{align*}
Let $\zeta(\xi)$ denote the conformal map of~$\Omega$ onto the
half-disc $\mathbb{D}^+ = \set{\zeta \,:\, \mathrm{Im}(\zeta)>0,
  \abs{\zeta}<1 }$, with fixed point~$\xi =0$. 
Let 
$\phi : [0, \infty) \to [0, \infty)$ be the function such that
 $\Omega$ is given in polar coordinates~$(\rho,
\theta)$ by
\begin{align*}
  \Omega &= \bigset{ \xi= i \rho \exp(i \theta) \,:\, \rho < \tfrac 12 ,
           \abs{\theta} < \tfrac \pi{2} + \phi(\rho)}.
\end{align*}
We need  to describe the precise behaviour of the function $\phi$ as $\rho \to 0^+$. To this purpose, 
define the function $\widetilde \sigma : (0, \infty) \to [0, \infty)$ as 
$$\widetilde\sigma(r) = \frac{\psi(r)}{r}\quad \hbox{for $r>0$,}$$
and observe that, thanks to equation \eqref{eq:sigmapsi}, 
%
%
there exists a constant $c>0$ such that
\begin{equation}\label{Nov33}
c\sigma (r) \leq \widetilde \sigma (r) \leq \sigma (r) \quad \hbox{for $r>0$.}
\end{equation}
%
Moreover, the function $ \widetilde \sigma \in C^2(0, \infty)$, and  $\widetilde \sigma'$ is locally Lipschitz continuous in $(0, \infty)$, since $\sigma$ is concave.  
In particular,
\begin{equation}\label{sep60}
r \widetilde \sigma ' (r) = \sigma (r) - \frac 1{r}\int _0^r\sigma (s)\, \mathrm{d}s \leq \sigma (r) \leq \tfrac 1c \widetilde \sigma (r)\,,
\end{equation}
and
\begin{equation}\label{dec1011}
\widetilde \sigma '' (r) = \frac{\sigma '(r)}r - 2\frac{\sigma (r) - \widetilde \sigma (r)}{r^2} 
\end{equation}
for $r>0$. Notice also that 
$$\widetilde\sigma (r) = \int _0^1\sigma (r s)\, \mathrm{d}s \quad \hbox{for $r\geq 0$,}
$$
and hence $\widetilde \sigma $ is concave, since $\sigma $ so is.
\\ For sufficiently small $|\xi|$, one has that $\xi \in \partial \Omega$ if and only if $\tan\theta=\frac{\psi(|x_1|)}{|x_1|}$, or, equivalently, 
\begin{align}\label{dec1001}
\theta=\arctan\tilde\sigma(|x_1|). 
\end{align}
By Taylor's formula,
\begin{align}\label{eq:1109}
\arctan\tilde\sigma(|x_1|)=\widetilde\sigma(|x_1|)+O(\widetilde\sigma(|x_1|)^3) \quad \hbox{as $x_1 \to 0$.}
\end{align}
Here, then notation $O(\varpi (r))$ as $r\to 0^+$  for some function $\varpi$ means that $O(\varpi (r))\approx \varpi (r)$ near $0^+$.
Since $|x_1|= \rho\cos\theta$, 
\begin{align*}
\widetilde\sigma(|x_1|)=\widetilde\sigma(\rho\cos\theta)=\widetilde\sigma(\rho+\rho(\cos\theta-1)) & =\widetilde\sigma(\rho)+\widetilde\sigma'(\rho_\theta)\rho(\cos\theta-1)\\
&=\widetilde\sigma(\rho)+\widetilde\sigma'(\rho_\theta)\rho_\theta\frac{\rho}{\rho_\theta}(\cos\theta-1) 
\end{align*}
for some $\rho_\theta\in(\rho\cos\theta,\rho)$ and for sufficiently small $\rho$.
 Furthermore, from equation \eqref{sep60} 
%
 and the fact that $\rho_\theta\approx \rho$ one has that
\begin{align*}
\widetilde\sigma'(\rho_\theta)\rho_\theta\frac{\rho}{\rho_\theta}(\cos\theta-1)\leq\,c \widetilde\sigma(\rho)\, \theta^2
\end{align*}
for some constant $c$, for sufficiently small $\rho$ and $\theta$.
Finally,  since  $\widetilde\sigma$ is increasing, equation \eqref{eq:1109} implies that $\theta\leq \,c\,\widetilde\sigma(\rho)$, and hence
\begin{align}\label{eq:1109b}
\widetilde\sigma(|x_1|)&=\widetilde\sigma(\rho)+O(\widetilde\sigma(\rho)^3) \quad \hbox{as $\rho \to 0^+$.}
\end{align}
Coupling equation \eqref{eq:1109} with \eqref{eq:1109b}, and recalling  \eqref{Nov33} yield
\begin{align}\label{Nov34}
  \phi(\rho) &= \widetilde\sigma(\rho) + \mathcal{O}\big( \widetilde\sigma(\rho)^3\big) =\widetilde\sigma(\rho) + \mathcal{O}\big( \sigma(\rho)^3\big)\quad \hbox{as $\rho \to 0^+$.}
\end{align}
Next, on defining the function $F: [0, \infty) \times (-\pi, \pi) \to \setR$ as 
$$F(\rho,\theta) = \theta-\arctan \widetilde\sigma(\rho\cos\theta) \quad \hbox{for $(\rho, \theta) \in [0, \infty) \times (-\pi, \pi)$,}$$
equation \eqref{dec1001} can be rewritten as
\begin{align}\label{dec1000}
F(\rho,\theta)=0.
\end{align}
Therefore,   the function $\phi(\rho )$ agrees with the function $\theta (\rho)$  implicitly defined by \eqref{dec1000}. One has that
\begin{align}
F_\rho (\rho, \theta)&=-\frac{1}{1+\widetilde\sigma(\rho\cos\theta)^2}\widetilde\sigma'(\rho\cos\theta)\cos\theta,
\end{align}
\begin{align}\label{dec1006}
F_\theta(\rho, \theta) &=1+\frac{1}{1+\widetilde\sigma(\rho\cos\theta)^2}\widetilde\sigma'(\rho\cos\theta)\rho\sin\theta\\
&=\frac{1+\tilde\sigma(\rho\cos\theta)^2+\widetilde\sigma'(\rho\cos\theta)\rho\cos\theta\tan\theta}{1+\widetilde\sigma(\rho\cos\theta)^2},
\end{align}
for $(\rho, \theta) \in [0, \infty) \times (-\pi, \pi)$.
  Hence,
  \begin{align}\label{dec1009}
\phi'(\rho)&=\frac{\widetilde\sigma'(\rho\cos\theta)\cos\theta}{1+\widetilde\sigma(\rho\cos\theta)^2+\widetilde\sigma'(\rho\cos\theta)\rho\cos\theta\tan\theta}\\ \nonumber
&=\frac{\widetilde\sigma'(\rho\cos\phi(\rho))\cos\varphi(\rho)}{1+\widetilde\sigma(\rho\cos\varphi(\rho))^2+\widetilde\sigma'(\rho\cos\phi(\rho))\rho\cos\phi(\rho)\tan\phi(\rho)}
\end{align}
and 
\begin{align}\label{dec1007}
\phi'(\rho)
\approx \widetilde\sigma'(\rho\cos\phi(\rho))
\end{align}
for sufficiently small $\rho$. By equation \eqref{sep60} and the monotonicity of $\sigma$,
\begin{align}\label{dec1015}
\phi'(\rho)&\leq c \frac{\sigma(\rho\cos\phi (\rho))}{\rho\cos\phi(\rho)}\leq \,c'\, \frac{\sigma(\rho)}{\rho}
\end{align}
for some constants $c$ and $c'$ and for sufficiently small $\rho$.
\\ Now,
\begin{align}\label{dec1002}
F_{\rho \rho}(\rho, \theta) = - \cos^2 \theta \frac{\widetilde\sigma ''(\rho\cos \theta) [\widetilde\sigma (\rho \cos \theta)^2+1] - 2 \widetilde\sigma(\rho \cos\theta) \widetilde\sigma'(\rho \cos\theta) ^2}{[\widetilde\sigma (\rho \cos\theta)^2+1]^{{2}}}\,,
\end{align}
\begin{multline}\label{dec1003}
F_{\rho \theta}(\rho, \theta) \\ = \frac{[\widetilde \sigma ''(\rho \cos \theta) \rho \sin \theta \cos \theta + \widetilde \sigma'(\rho  \cos\theta) \sin \theta][\widetilde \sigma (\rho \cos \theta)^2 +1 ] - 2 \widetilde \sigma (\rho \cos \theta) \widetilde \sigma' (\rho \cos \theta) ^2 \rho \sin \theta \cos \theta}{[\widetilde \sigma (\rho \cos \theta)^2 +1]^2}\,,
\end{multline}
\begin{multline}\label{dec1004}
F_{\theta  \theta}(\rho, \theta) \\ = \frac{[- \widetilde \sigma ''(\rho \cos \theta)\rho^2 \sin ^2\theta + \widetilde \sigma ' (\rho \cos \theta) \rho\cos \theta] [\widetilde \sigma (\rho \cos \theta)^2 +1] + 2 \widetilde \sigma (\rho \cos \theta) \widetilde \sigma '(\rho \cos \theta)^2 \rho^2 \sin ^2 \theta}{[\widetilde \sigma (\rho \cos \theta)^2 +1]^2}\,
\end{multline}
for $(\rho, \theta) \in [0, \infty) \times (-\pi, \pi)$.
Thus, on denoting $\widetilde\sigma(\rho \cos\phi (\rho))$, $\widetilde\sigma'(\rho \cos \phi (\rho))$ and $\widetilde\sigma''(\rho \cos\phi (\rho))$  simply by $\widetilde\sigma$, $\widetilde\sigma'$ and $\widetilde\sigma''$, one infers from \eqref{dec1006} and \eqref{dec1002}--\eqref{dec1004} that
\begin{align}\label{dec1005}
\phi ''(\rho) & =
- \frac{F_{\rho\rho}F_\theta ^2 - 2 F_{\rho \theta}F_\rho F_\theta + F_{\theta \theta}F_\rho^2}{F_\theta ^3}
\\ \nonumber & =
 -\big[1+\tilde\sigma^2+\widetilde\sigma'\rho\sin\phi (\rho) \big]^{-3}(\widetilde\sigma ^2 + 1)^{-{{1}}}
\\ \nonumber 
& \quad \times
\bigg\{-\cos ^2\phi (\rho)
\Big[
\widetilde\sigma '' (\widetilde\sigma ^2+1))- 2 \widetilde\sigma  \widetilde\sigma'^2\big]\big[1+\tilde\sigma^2+\widetilde\sigma'\rho\sin\phi (\rho)\big]^2
\\ 
\nonumber 
& \quad \quad \quad - 2 \widetilde \sigma ' \cos \phi (\rho) \big[(\widetilde\sigma '' \rho \sin \phi (\rho)\cos \phi (\rho)
 + \widetilde \sigma' \sin \phi (\rho))(\widetilde \sigma ^2 +1 ) - 2 \widetilde \sigma  (\widetilde \sigma' )^2 \rho \sin \phi (\rho)\cos \phi (\rho)\big] 
\\ \nonumber & \quad \quad  \quad \times \big[1+\tilde\sigma^2+\widetilde\sigma'\rho\sin\phi (\rho)\big]
\\ 
\nonumber 
& \quad \quad \quad + (\widetilde \sigma' )^2  \cos^2 \phi (\rho) \big[
(- \widetilde \sigma '' \rho^2 \sin ^2\phi (\rho) + \widetilde \sigma '  \rho\cos \phi (\rho)) (\widetilde \sigma ^2 +1) + 2 \widetilde \sigma  (\widetilde \sigma ' )^2 \rho^2 \sin ^2 \phi (\rho)\big]\bigg\}
\,
\end{align} 
for $\rho \geq 0$.
On setting $$\varsigma = \rho \cos \phi (\rho),$$  and making use of  \eqref{sep60}, \eqref{dec1011} and \eqref{Nov34} and of the fact that $\sigma $ and $\widetilde \sigma$ are bounded, we deduce  that
\begin{align}\label{dec1010}
\rho & \Big|-\cos ^2\phi (\rho)
\Big[
\widetilde\sigma '' (\varsigma)(\widetilde\sigma (\varsigma)^2+1))- 2 \widetilde\sigma (\varsigma) \widetilde\sigma'(\varsigma)^2\Big]\Big[1+\tilde\sigma(\varsigma)^2+\widetilde\sigma'(\varsigma)\rho\sin\phi (\rho)\Big]^2\Big| 
\\ \nonumber & = \rho  \bigg|-\cos ^2\phi (\rho)
\bigg[\bigg(\frac{\sigma '(\varsigma)}\varsigma - 2\frac{\sigma (\varsigma ) - \widetilde \sigma (\varsigma)}{\varsigma ^2}\bigg)  (\widetilde\sigma (\varsigma)^2+1))-  2 \widetilde\sigma (\varsigma)\frac{\sigma (\varsigma)-\widetilde \sigma (\varsigma)}{\varsigma} \widetilde\sigma'(\varsigma)\bigg]
\\ \nonumber & \quad \times\bigg[1+\tilde\sigma(\varsigma)^2+   \frac{\sigma (\varsigma)-\widetilde \sigma (\varsigma)}{\varsigma} \rho\sin\phi (\rho)\bigg]^2\bigg| 
\\ \nonumber
\leq & c \bigg[\bigg(\sigma ' (\varsigma) + \cos \phi (\rho) \frac{\sigma (\varsigma ) - \widetilde \sigma (\varsigma )}{\varsigma}\bigg) + \frac{\varsigma  \widetilde \sigma '(\varsigma )}{\cos\phi (\rho)}\widetilde \sigma  (\varsigma) \widetilde \sigma '(\varsigma )\bigg]\bigg(1+ \frac{\varsigma \widetilde \sigma '(\varsigma)}{\cos\phi (\rho)}  \widetilde \sigma (\varsigma)\bigg)^2
\\ \nonumber & \leq c'\big(\sigma ' (\varsigma) + \widetilde \sigma ' (\varsigma)\big)
\end{align}
for some constants $c$ and $c'$ and for sufficiently small $\rho$. Similar estimates tell us that  
\begin{align}\label{dec1012}
\rho & \Big|
- 2 \widetilde \sigma ' (\varsigma)\cos \phi (\rho) \big[(\widetilde\sigma '' (\varsigma) \rho \sin \phi (\rho)\cos \phi (\rho)
 + \widetilde \sigma' (\varsigma)\sin \phi (\rho))(\widetilde \sigma (\varsigma)^2 +1 ) \\ \nonumber 
& \quad - 2 \widetilde \sigma (\varsigma) \widetilde \sigma' (\varsigma)^2 \rho \sin \phi (\rho)\cos \phi (\rho)\big]\Big|
\\ \nonumber & \leq c
\big(\sigma ' (\varsigma) + \widetilde \sigma ' (\varsigma)\big),
\end{align}
and 
\begin{align}\label{dec1013}
\rho & \Big|\widetilde \sigma'  (\varsigma) ^2  \cos^2 \phi (\rho) \Big[
\big(- \widetilde \sigma '' (\varsigma)  \rho^2 \sin ^2\phi (\rho) \\
\nonumber & \quad + \widetilde \sigma '   (\varsigma) \rho\cos \phi (\rho)\big) (\widetilde \sigma  (\varsigma) ^2 +1) + 2 \widetilde \sigma  (\varsigma)  \widetilde \sigma '  (\varsigma) ^2 \rho^2 \sin ^2 \phi (\rho)\Big]\Big|
\\ \nonumber & \leq c
\big(\sigma ' (\varsigma) + \widetilde \sigma ' (\varsigma)\big)
\end{align}
for some constant $c$ and for sufficiently small $\rho$. Since $\sigma $ and $\widetilde \sigma $ are concave, their derivatives ar non-increasing, and hence
\begin{align}\label{dec1014}
\sigma ' (\varsigma) = \sigma '(\rho \cos \phi (\rho)) \leq \sigma ' (\rho /2) \quad \hbox{and} \quad 
\widetilde \sigma ' (\varsigma) = \widetilde \sigma '(\rho \cos \phi (\rho)) \leq \widetilde\sigma ' (\rho /2)
\end{align}
for sufficiently small $\rho$. From equations \eqref{dec1005}--\eqref{dec1014} one can infer that there exists $\delta >0$ such that
\begin{align}\label{dec1016}
|\rho \phi ''(\rho)| \leq c \big(\sigma ' (\rho /2) + \widetilde\sigma ' (\rho /2)\big) \quad \hbox{if $\rho \in (0, \delta]$.}
\end{align}
\\
%
Let us define the functions $\Phi_-, \Phi_+, \Theta : (0, \delta] \to \setR$ as 
\begin{align}\label{Nov35}
  \Phi_-(\rho)=-\tfrac{\pi}{2}-\phi(\rho),\quad
  \Phi_+(\rho)=\tfrac{\pi}{2}+\phi(\rho),
\end{align}
and
\begin{align*}
\Theta(\rho)&=\Phi_+(\rho)-\Phi_-(\rho)
\end{align*}
for $\rho \in (0, \delta]$. Note that $\Phi_+(\rho)+\Phi_-(\rho)=0$. Moreover,
by \eqref{dec1015}, $0\leq \rho\Phi_+'(\rho)=\rho \phi'(\rho)\leq c \sigma (\rho)$ for some constant $c$ and for sufficiently small $\rho$, whence
\begin{align*}
\lim _{\rho \to 0^+}\rho\Phi_+'(\rho)=0.
\end{align*}
 Similarly $\lim _{\rho \to 0^+} \rho\Phi_-'(\rho)=0$. 
Thus the assumptions of \cite[Thm. XI(B)]{Wa} will be verified, if we show that, in addition,
\begin{align}\label{marzo10}
\int_0 \big| (\rho\Phi_+'(\rho))'\big| \,{\rm d}\rho <\infty,\quad\int_0 \big|(\rho\Phi_-'(\rho))'\big|\,{\rm d}\rho < \infty,\quad \int_0 \frac{\Theta'(\rho)^2}{\Theta(\rho)}\rho\,\mathrm{d}\rho < \infty\,.
\end{align}
Incidentally, let us note that the last integral is affected by a typo in the statement of \cite[Thm. XI(B)]{Wa}, where the factor $\rho$ is missing.
\\ Since
 $\phi ' \geq 0$, by  equation \eqref{dec1016}
\begin{align*}
\int_0 ^\delta\big| (\rho\Phi_+'(\rho))'\big|\,\mathrm{d}\rho&=\int_0^\delta \big|(\rho\Phi_-'(\rho))'\big|\,\mathrm{d}\rho
=\int_0^\delta\big|\phi '(\rho) + \rho\phi ''(\rho))\big|\mathrm{d}\rho  \leq \int_0^\delta \phi '(\rho) \,\mathrm{d}\rho + \int_0^\delta \rho|\phi ''(\rho)|\mathrm{d}\rho
\\&
\leq  \int_0^\delta \phi '(\rho) \,\mathrm{d}\rho + c \int_0^\delta \sigma ' (\rho /2) + \widetilde\sigma ' (\rho /2) \, \mathrm{d}\rho = \phi (\delta) + c \sigma (\delta/2) + c \widetilde \sigma (\delta /2).
\end{align*}
On the other hand, owing to \eqref{dec1015},
\begin{align}\label{dec1018}
 \int_0^{\frac{1}{2}}\frac{\Theta'(\rho)^2}{\Theta(\rho)}\rho\,\mathrm{d}\rho&=4
 \int_0^{\frac{1}{2}}\frac{\phi'(\rho)^2}{\pi+2\phi(\rho)}\rho\,\mathrm{d}\rho\leq c \int_0^{\frac{1}{2}}\phi'(\rho)^2\rho\,\mathrm{d}\rho\
\leq c'\int_0^{\frac{1}{2}}\frac{\sigma^2(\rho)}{\rho}\,\mathrm{d}\rho\,
\end{align}
for some constants $c$ and $c'$.
Now, 
\begin{align*}
\overline\omega(\rho)=\int_\rho^1\frac{\omega(r)}{r}\,\mathrm{d}r\geq \omega(\rho)\int_\rho^1\frac{1}{r}\,\mathrm{d}r=\omega(\rho)\log\frac 1\rho\, \quad \hbox{for $\rho \in (0,1)$,}
\end{align*}
whence, by \eqref{eq:sigma} and \eqref{marzo11}, $\sigma(\rho)\leq \tfrac{\gamma}{\log(1/\rho)}$ for $\rho \in (0, \delta]$. Consequently, the integral on the leftmost side of equation \eqref{dec1018} is finite.
It follows from
~\cite[Theorem~(XI)(b)]{Wa} that
\begin{align}
  \label{eq:war-asymp}
  |\zeta(\xi)| &= c\, \exp \Big( - \pi
                           \int_\rho^{\frac 12} \frac{\mathrm{d}r}{r (\pi + 2
                           \phi(r))}+o(1)\Big) \quad \hbox{as $\xi \to 0$}
\end{align}
for some positive constant $c$.
Note that
\begin{align*}
  \frac{1}{\rho (\pi +2 \phi (r))}
  &=
    \frac{1}{{ \pi }\rho} \bigg( 1 - \frac{2}{\pi}
    \phi(\rho) + O\big(\phi(\rho)^2\big)\bigg) \quad \hbox{as $\rho \to 0^+$.}
\end{align*}
Let us define the function $\mu : (0, \delta] \to \setR$ as 
\begin{align*}
  \mu(\rho) = - \pi \int_\rho^1  \frac{1}{r (\pi +2 \phi(r))}\,\mathrm{d}r + 
  \int_\rho^{1/2} \frac{1}{r} \bigg( 1- \frac{2 \phi(r)}{\pi}
  \bigg)\,\mathrm{d}r \quad \hbox{for $\rho \in (0, \delta]$.}
\end{align*}
Consequently,
\begin{align*}
  -\pi \int_\rho^{1/2} \frac{1}{r (\pi +2 \phi(r))}\,\mathrm{d}r
  &=
    - \int_\rho^{1/2} \frac{1}{r} \bigg( 1 - \frac{2}{\pi}
    \phi(r)\bigg)\,\mathrm{d}r + \mu(\rho)
  \\
  &=  \log (2\rho) +
    \frac{2}{\pi} \int_\rho^{1/2} \frac{\phi(r)}{r} \,\mathrm{d}r + \mu(\rho)  \quad \hbox{as $\rho \to 0^+$.}
\end{align*}
We have that
 $\lim_{\rho \to 0^+} \mu(\rho) <\infty$, since, by \eqref{Nov34} and \eqref{Nov33}, there exist constants $c$ and $c'$ such that
\begin{align}
  \label{eq:conv}
\int_0^{1/2} \frac{\phi (r)^2}{r}\, \mathrm{d}r & \leq c
   \int_0^{1/2} \frac{\widetilde\sigma (r)^2}{r}\, \mathrm{d}r  \leq c'  \int_0^{1/2} \frac{\sigma (r)^2}{r}\, \mathrm{d}r  
\\ \nonumber &
= c'
    \gamma^2\int_0^{\frac 12} \frac{\omega (r)^2}r \bigg(\int _r ^1
    \frac{\omega (s)}{s}\, \mathrm{d}s\bigg)^{-2}\, \mathrm{d}r
    \\ \nonumber
    & \leq c'\gamma^2 \omega(1/2) \int_0^{1/2} \frac{\omega (r)}r
    \bigg(\int _r ^1 \frac{\omega (s)}{s}\, \mathrm{d}s\bigg)^{-2}\, \mathrm{d}r
    \\ \nonumber
    &= c'
    \gamma^2 \omega(1/2) \bigg(\int _{1/2} ^1 \frac{\omega (r)}{r}\,
    \mathrm{d}r\bigg)^{-1} = c' \gamma^2
    \frac{\omega(1/2)}{\overline{\omega}(1/2)}.
\end{align}
Observe that the last but one equality holds thanks to the fact   that $\lim _{r \to 0^+}\overline{\omega}(r)=\infty$.
Moreover, by \eqref{Nov34},   there exists a positive constant $\kappa$ such that
\begin{align}\label{dec1020}
  \kappa \int_\rho^{1/2} \frac{\phi(r)}{r} \,\mathrm{d}r&\geq  \int_\rho^{1/2} \frac{\widetilde\sigma(r)}{r} \,\mathrm{d}r = \int_\rho^{1/2}\int_0^r\frac{\sigma(s)}{r^2}\,\mathrm{d}s\,\mathrm{d}r=\int_0^{\frac{1}{2}}\int_{\max\{\rho,s\}}^{\frac{1}{2}}\frac{1}{r^2}\,\mathrm{d}r\,\sigma(s)\,\mathrm{d}s
\\ \nonumber
&=-2\int_0^{\frac{1}{2}}\sigma(s)\,\mathrm{d}s+\frac{1}{\rho}\int_0^\rho\sigma(s)\,\mathrm{d}s+\int_\rho^{1/2} \frac{\sigma(s)}{s} \,\mathrm{d}s\\ \nonumber
&= -2\int_0^{\frac{1}{2}}\sigma(s)\,\mathrm{d}s+\widetilde\sigma(\rho)+\int_\rho^{1/2} \frac{\sigma(s)}{s} \,\mathrm{d}s \quad \hbox{for $\rho \in (0, \tfrac 12)$.}
\end{align}
Next,
\begin{align}
  \label{eq:asympt}
    \frac{2}{\pi} \int_\rho^{1/2} \frac{\sigma(r)}{r} \,\mathrm{d}r &=
    \frac{2\gamma}{\pi}\int _\rho ^{1/2} \frac{\omega(r)}{r}
    \bigg(\int _r ^1 \frac{\omega (s)}{s}\, \mathrm{d}s\bigg)^{-1}\, \mathrm{d}r
    \\ \nonumber
    & = \frac{2\gamma}{\pi} \log \bigg( \int _\rho ^1 \frac{\omega
      (s)}{s}\, \mathrm{d}s \bigg/ \int _{1/2}^1 \frac{\omega (s)}{s}\, \mathrm{d}s
    \bigg)
= \frac{2\gamma}{\pi} \log\bigg(
    \frac{\overline{\omega}(\rho)}{\overline{\omega}(1/2)} \bigg)
\end{align}
for $\rho \in (0, \tfrac 12)$.
Thus, there exists a positive constant $c$ such that
\begin{align}\label{sep11}
   |\zeta(\xi)|
  & \geq c\, \rho\,  
    \big(\overline{\omega}(\rho) \big)^{\frac{2\gamma}{\kappa \pi}}
    \exp
    \big(\widetilde\mu(\rho)\big),
\end{align}
for sufficiently small $\rho$, 
where $\kappa$ is the constant appearing in \eqref{dec1020}, and
$$\widetilde\mu (\rho)=\mu (\rho)+{\tfrac 2 \pi} \widetilde\sigma (\rho)+C+o(1)\quad \hbox{as $\rho \to 0^+$,}$$
for some constant $C$.
{  Define the function $v : \Omega \to \setR$ as 
$$v(\xi) = \mathrm{Im}(\zeta(\xi)) \qquad \hbox{for $\xi \in \Omega$.}$$
Since the function $\zeta$ is
holomorphic, the function $v$ is harmonic, and vanishes on $\{x_2= - \psi (|x_1|)\}$.   Furthermore, by the Cauchy-Riemann equations,
\begin{equation}\label{lastminute}
|\nabla v(\xi)|^2 = {\rm det} \,{\rm J} \zeta (\xi) \qquad \hbox{for $\xi \in \Omega$,}
\end{equation}
where ${\rm J}\zeta$ denotes the Jacobian matrix of the conformal map $\zeta$.
Let $w \in C^\infty _0(B_{\frac 12}(0))$ be such that $w=1$ in $B_{\frac 14}(0)$. Denote by $u: \Omega \to \mathbb R$ the function given by 
$$u=v\,w.$$ 
Thanks to \eqref{lastminute}, the function $u\in W^{1,2}_0(\Omega)$, and solves the Dirichlet problem \eqref{poisson}, with $\bfF = (w-1) \nabla v + v \nabla w$.  We claim that,  
$\bF \in C^{0, \omega(\cdot)}(\Omega)$
 and hence, thanks to \eqref{embcamp},
\begin{equation}\label{dec1036}
\bF \in \mathcal L^{ \omega(\cdot)}(\Omega).
\end{equation}
To verify out claim, notice that,
owing to \eqref{dec1028}, 
$$\lim_{r\to 0^+} \frac{r^\epsilon}{\omega (r)} =0$$
for every $\epsilon>0$. Thus, \eqref{dec1036} will follow if we  show that there exists $\epsilon \in (0, 1)$ such that 
\begin{equation}\label{dec1035}\bF \in C^{0, \epsilon}(\Omega).
\end{equation}
 As observed above, one has that $\psi \in C^{1, \sigma(\cdot)}$, whence $\psi \in C^{1, 0}$. Thus, since $v$ is harmonic in $\Omega$, one has that $v\in C^\epsilon (\Omega)$ for some $\epsilon >0$. On the other hand,   $\sigma $ is concave in $(0, \infty)$,  and consequently $\sigma \in C^{0,1}(\tfrac 18, \infty)$. Thus, $\psi \in C^{1,1}(\tfrac 14, \infty)$, and therefore $\nabla v \in C^{0, \epsilon}(\Omega\setminus B_{\frac 18}(0))$ for some $\epsilon >0$. Since the function $1-w$ vanishes in $B_{\frac 14}(0)$,  there exists $\epsilon  >0$ that renders equation \eqref{dec1035} true .
\\ In order to conclude the proof, it remains to show that
\begin{equation}\label{sep10}
\nabla u \notin \mathcal L ^{\omega (\cdot)}(\Omega)\,
\end{equation}
for a suitable choice of $\gamma$. To prove this assertion, observe that,
by \eqref{sep11}, there exists a positive constant $c$ such that
\begin{align}\label{sep12}
|u(\xi)| = |v(\xi)| \geq c \rho\,  
    \big(\overline{\omega}(\rho) \big)^{\frac{2\gamma}{\kappa \pi}}
\end{align}}
for sufficiently small $\rho$, provided that ${\rm arg}\, \zeta(\xi) = \tfrac \pi2$, namely if $\theta =0$. Assume, by contradiction, that
$\nabla u \in \mathcal L ^{\omega (\cdot)}(\Omega)\,.$
Then, by Proposition \ref{embedding}, there exists a constant $c$ such that
$$|u(\xi)| \leq c \rho\, \overline{\omega}(\rho) \quad \hbox{for $\rho \in (0, 1)$.}$$
This contradicts inequality \eqref{sep12} if $\gamma > \tfrac {\kappa \pi} 2$.
\end{proof}

\section*{Compliance with Ethical Standards}\label{conflicts}

\smallskip
\par\noindent 
{\bf Funding}. This research was partly funded by:  
\\(i)   Grant LDS 2012-05 of Leopoldina (German National Academy of Science);
\\ (ii) Research Project of the
Italian Ministry of University and Research (MIUR) Prin 2015 \lq\lq  Partial differential equations and related analytic-geometric inequalities"  (grant number 2015HY8JCC);     
\\ (iii) GNAMPA   of the Italian INdAM - National Institute of High Mathematics (grant number not available).
\\ (iv) Primus Research Programme of  Charles University, Prague (grant number PRIMUS/19/SCI/01).

\smallskip
\par\noindent
{\bf Conflict of Interest}. The authors declare that they have no conflict of interest.

\end{document}